\pgfplotsset{compat=1.11}
\theoremstyle{plain}
\newtheorem{theorem}{Theorem}[section]
\newtheorem{lemma}[theorem]{Lemma}
\theoremstyle{definition}
\newtheorem{definition}[theorem]{Definition}
\newtheorem{remark}[theorem]{Remark}
\newtheorem*{theorem*}{Theorem}
\newtheorem*{remark*}{Remark}
\newtheorem*{thm*}{Theorem}
\newtheorem*{conjecture*}{Conjecture}
\newcommand{\definedas}{\mathrel{\raise.095ex\hbox{\rm :}\mkern-5.2mu=}}
\newcommand{\asdefined}{\mathrel{=\mkern-5.2mu\raise.095ex\hbox{\rm :}}}
\newcommand{\R}{\mathbb{R}}
\newcommand{\IS}{\mathbb{S}}
\newcommand{\diver}{\operatorname{div}}
\newcommand{\tr}{\operatorname{tr}}
\newcommand{\Hess}{\operatorname{Hess}}
\newcommand{\diam}{\operatorname{diam}}
\newcommand{\IN}{\mathbb{N}}
\newcommand{\qtext}[1]{\quad\text{#1}\quad}
\newcommand{\eps}{\varepsilon}
\newenvironment{enum}{\begin{enumerate}[i)]}{\end{enumerate}}
\newcounter{flabelcounter}
\newenvironment{steps}
{\begin{list}{}{\setlength\labelwidth{0pt}%
      \setlength\leftmargin{0pt}
      \setlength\itemindent{0pt}%
      }}
  {\end{list}}
\newcommand{\ds}{\,\mathrm{d}s}
\newcommand{\dt}{\,\mathrm{d}t}
\newcommand{\du}{\,\mathrm{d}u}
\title[Local Foliations]{Local Space Time Constant Mean Curvature and Constant  expansion foliations}
\author[Metzger]{Jan Metzger}
\address{University of Potsdam, 14476 Potsdam, Germany}
\email{jan.metzger@uni-potsdam.de}
\author[Pe\~nuela Diaz]{Alejandro Pe\~nuela Diaz}
\address{Max-Planck Institute for Gravitational Physics, University of Potsdam, 14476 Potsdam, Germany}
\email{alejandro.penuela@aei.mpg.de}
\begin{document}
\begin{abstract}
    Inspired by the small sphere-limit for quasi-local energy we study local foliations of surfaces with prescribed mean curvature. Following the strategy used by Ye in \cite{Ye} to study local constant mean curvature foliations  we use a Lyapunov Schmidt reduction  in an $n+1$ dimensional manifold equipped with a symmetric $2$-tensor to construct the foliations around a point, prove their uniqueness and show their nonexistence conditions. To be specific, we study two foliation conditions. First we consider constant space-time mean curvature surfaces. They are used to characterizing the center of mass in general relativity \cite{STCMC}. Second, we study local foliations of constant expansion surfaces \cite{Ce}. 
\end{abstract}
	
\maketitle
	
\section{Introduction and Results}

The search for a good definition of center of mass in general relativity has been a long standing problem in physics, with many different attempts and possible definitions given by both physicists and mathematicians. In 1996 by Huisken and Yau \cite{HY} proposed to use a foliation of constant mean curvature $2$-spheres (CMC surfaces) in an asymptotically flat manifolds. Under suitable conditions these foliations are unique and fully characterize the center of mass at spatial infinity of an isolated physical system in the time symmetric case. This result was later refined and improved by the first named author \cite{Ce}, by Huang \cite{Huang2, Huang1}, Eichmair and the first named author \cite{Eich} and Nerz \cite{Nerz2} among others. The non time symmetric case has been a more elusive problem with different attempts. Some of these were trying to find a generalization to the CMC foliation of Huisken and Yau like the constant expansion foliation or the STCMC foliation proposed by Cederbaum and Sakovich \cite{STCMC}. For a more extensive exposition of the problem see \cite{wangyau3,wangyau2} and  \cite{Nerz3}. 


\subsection*{Surfaces of constant expansion (CE)}
A a $3$-dimensional initial data set $(M,g,k)$ for General Relativity is a $3$-dimensional Riemannian manifold $(M,g)$ and a symmetric $2$-tensor $k$ that plays the role of the second fundamental form, when $(M,g)$ is embedded as a space-like hyper-surface in a $4$-dimensional space-time. If $(M,g,k)$ is asymptotically flat in a suitable sense and when the ADM-mass of $(M,g,k)$ is positive then the first named author found in \cite{Ce} that there are two foliations by $2$-spheres $\{\Sigma_r^\pm \}_{r>r_0}$ of constant expansion. These surfaces satisfy the prescribed mean curvature equation 
\begin{equation*}
  \theta^\pm(\Sigma_r^\pm):= H(\Sigma_r^\pm) \pm P(\Sigma_r^\pm) = \frac{2}{r}.
\end{equation*}
Here $H$ denotes the mean curvature of the surface, $g_{\Sigma_r}$ the induced metric and $P=\tr_{g_{\Sigma_r}} k$ is the trace of the tensor $k$ restricted to the tangent space of the surface.  

The main motivation of this generalization of the result of Huisken and Yau was to include the dynamical term $k$ into the construction of the foliation. However, the order on which this affects the foliation implies that these surfaces do not have a well defined center of mass at infinity in the presence of ADM-momentum. In our results, described below, this feature also enters again. In fact, the existence of the foliation here depends on the local constant expansion 1-form \eqref{eq:ce-one-form0} which only includes information on $k$ and does not explicitly depend on the curvature of $g$. 

\subsection*{Surfaces of Constant Space-time Mean Curvature (STCMC)}
The work of Cederbaum and Sakovich \cite{STCMC} proposed a different generalization of the CMC foliation and found what appears to be the surfaces that characterize the center of mass of an isolated system. They show that for an asymptotically flat initial data set with non-vanishing ADM energy, there exist an unique foliation of $2$-spheres $\{\Sigma_r\}_{r>r_0}$ of constant space-time mean curvature (STCMC). These surfaces satisfy
\begin{equation*}
  \theta^+(\Sigma_r)\cdot \theta^-(\Sigma_r) = H^2(\Sigma_r) - P^2(\Sigma_r) = \frac{4}{r^2},   
\end{equation*}
that is, the product of both expansions of the surfaces is constant.
This foliation is unique and with some additional assumptions it has a well defined center of gravity for the surfaces at infinity. This center has all the required properties of a center of mass, that is the proper transformation and conservation laws, and generalizes the center of mass expression proposed by Beig and O'Murchadha \cite{beigmurch}. In particular it remedies the deficiencies described in \cite{Nerz3}. If an additional requirement on the divergence of the extrinsic torsion of the sphere is satisfied then a STCMC sphere is in particular a ``rigid'' sphere in the notion of Gittel, Jezierski, and Kijowski \cite{War1,War2}.
 
The motivation for this paper comes from the question, whether the surfaces studied in the asymptotically flat setting can be related to a physically meaningful quantity in the local case, where the diameters of the surfaces are very small, this would be similar to how quasi-local energies have an associated physical quantity when studied in the small sphere limit. In the literature there exist many definitions of quasi-local energies, as it can be seen in the detailed review \cite{liv}. But any such notion should satisfy certain physically motivated properties. In particular, they should have the right asymptotics in the large-sphere and the small-sphere regime. For the large sphere limits one should recover the ADM-mass or the Bondi-mass as the limit when evaluating the mass on spheres with radius increasing to infinity. Here, we consider the regime of the small sphere limit, when taking spheres with radius going to zero cut out of a light-cone centered at a point $p$ of space-time. The leading order term should recover the stress-energy tensor in space-times with matter fields. There are many results in this direction, like for instance  for the Hawking energy \cite{Haw}, the Brown-York energy \cite{Brown} the Kijowski-Epp-Liu-Yau  energy \cite{Yu}, the Wang-Yau \cite{wangyau} and for their higher dimensional  versions \cite{Wang}.

\begin{figure}[h]
\centering
\includegraphics[width=8.3cm]{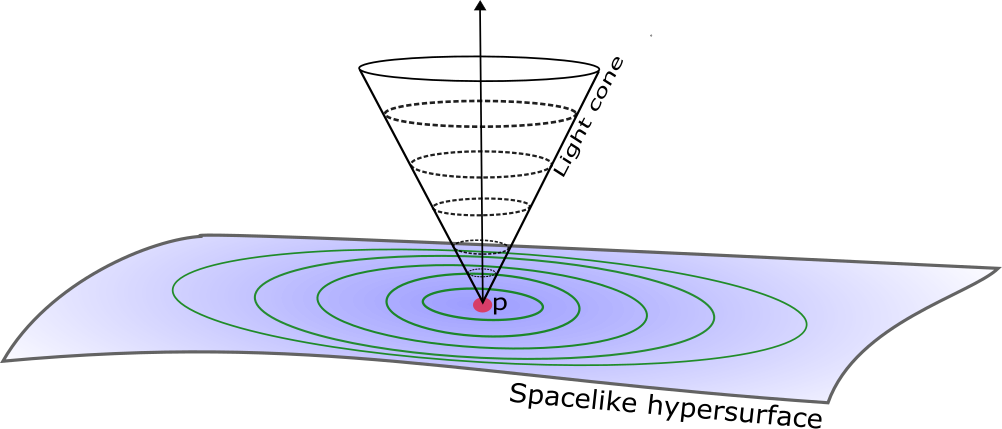}
\caption{ Comparison between approaching a point along cuts on a null cone and along a foliation on an initial data set (approach we are using  here).}
\end{figure}
This small sphere limit motivates the question if there is also an analogous condition for the center of mass (in case it is given by a foliation), and what condition should a point fulfill to possess or not to possess a concentration or a foliation of surfaces that characterize the center of mass. This question was actually solved for the time symmetric case by Ye \cite{Ye} in 1991. Ye showed that there is an unique CMC foliation around any non-degenerate critical point of the scalar curvature in a Riemannian manifold of arbitrary dimension. He also showed that if a point is not a critical point of the scalar curvature then such a foliation does not exist. Ye's ansatz is to construct the foliation as a family of perturbations of geodesic spheres. In flat space the linearization of the mean curvature operator has a three-dimensional kernel. The presence of this kernel leads to the integrability conditions that the scalar curvature at the center point is critical, and the non-degeneracy allows to solve for a small translation compensating the kernel using a Ljapunov-Schmidt reduction. This method has been used recently to show  some other interesting results like the construction of foliations of large area-constrained Willmore surfaces in  manifolds asymptotic to Schwarzschild \cite{eichko} or the finding of local area-constrained Willmore foliations in \cite{ikoma} and \cite{locwill}. We will use Ye's  strategy to study local foliations of STCMC  and CE surfaces. 

\begin{remark}
 Note that we will work only on an $n+1$-dimensional initial data set $(M,g,k)$ and approach a point $p\in M$ with deformed $n$-dimensional spheres contained in $M$. This is different from the method used to study the small sphere limit of quasi-local energy. There one approaches a point with suitable $n$-dimensional sections of its light cone in an $n+2$-dimensional ambient space-time.
\end{remark}

\subsection{Results for STCMC foliations}

In the following $(M,g,k)$ denotes an $n+1$-dimensional initial data set. The foliations considered will be regularly centered in the following sense:

\begin{definition}[\protect{cf.~\cite[Definition 1.1]{Ye}}]
    \label{regcent}
    A smooth foliation $\mathcal{F} $ of codimension~$1$ of $U \setminus \{ p \} $ for a neighborhood $U$ of $p$ is called a \emph{foliation centered at p}, provided that its leaves are all closed. If furthermore 
    \begin{equation}\label{regular}
      \sup_{S \in \mathcal{F} } \big( \sup_{S} |B_S |\cdot \diam S \big)<\infty,  
    \end{equation}
    then $\mathcal{F}$ is called a  \emph{foliation regularly centered at p}. Here $B_S$ denotes the second fundamental form of $S$, $|B_S|^2= {B_S}_{ij} {B_S}_{pq}\, {g_S}^{ip} {g_S}^{jq}$ and  $\diam S$ is the extrinsic diameter of the surface. 
\end{definition}
 We also consider \emph{concentrations of surfaces regularly centered at $p$}. 
 \begin{definition}
 We say that a family of closed compact embedded surfaces $\{S_r: r \in I \}$, where $I$ is an interval satisfying $0 \in \Bar{I} $, is a \emph{concentration of surfaces around $p$} if 
    \begin{equation*} 
        \limsup_{r \to 0} \diam S_r =0
        \quad \text{ and} \quad \bigcap_{r_0 \in (0,\infty)} \overline{\bigcup_{r \in I \cap (0,r_0) }S_r}= \{p\}.
    \end{equation*}
    Furthermore we say that the concentration is regularly centered if \eqref{regular} holds.
\end{definition}
Note that a foliation is a concentration of surfaces where the surfaces can be continuously parameterized by $r$ (that is $\forall r\in I$ there is a surface $S_r$) and where the surfaces do not intersect with each other.
\begin{definition}
    For any tangent vector $V \in TM$ we define the \emph{local STCMC $1$-form}
    \begin{equation*}
    \begin{split}
    A_\text{ST}(V)  =\frac{n}{2} \nabla_V \mathrm{Sc} + \frac{1}{(n+5)}&\Big[ \nabla_V (|k|^2 )   +((n+1)(n+5)+1)\nabla_V \Big(\frac{(\tr k)^2}{2}\Big)\\   &+4 \diver ( k^2 )(V)  - 2 (n+4) \diver \left(\tr k\cdot k\right)(V)\Big]
    \end{split}
    \end{equation*}
    where we denote $k^2_{jq}= k_{ij} g^{ip} k_{pq} $   and $|k|^2= k_{ij}k_{pq} g^{ip}g^{jq}$.
\end{definition}

\begin{theorem*}
 Let  $p\in M$ be such that $A_\text{ST}(p)=0$ and such that $\nabla A_\text{ST}(p)$ is invertible. Then there exists $C>0$ depending only on the dimension of $M$ with the following property: If 
 \begin{equation*}
     C |(\nabla A_\text{ST})^{-1}|\, (|k|^2 + |\mathrm{Ric}|  ) |k| |\nabla k|<1
     \quad\text{at}\quad p,
 \end{equation*}  
 then there exist a $\delta>0$ and a smooth foliation $\mathcal{F}= \{ S_r : r\in (0, \delta)  \}$ of STCMC spheres with $\sqrt{H_{S_r}^2-P_{S_r}^2}= \frac{n}{r}$. $\mathcal{F}$ is a foliation regularly centered at $p$ and its leaves $S_r$ are normal graphs over geodesic spheres of radius $r$. 
\end{theorem*}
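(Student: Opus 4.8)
The plan is to run Ye's Lyapunov--Schmidt scheme for the operator $\theta^{+}\theta^{-}=H^{2}-P^{2}$. I would work in geodesic normal coordinates around $p$; for a center point $a$ in a small coordinate ball and $\phi\in C^{2,\alpha}(\mathbb{S}^{n})$ consider the normal graph
\[
  S_{r,a,\phi}=\big\{\exp_{a}\!\big(r(1+\phi(\xi))\,\xi\big):\xi\in\mathbb{S}^{n}\big\}
\]
over the geodesic sphere of radius $r$ about $a$. Since $H\sim n/r$ while $P=\tr_{g_{S}}k$ stays bounded, on these surfaces $\sqrt{H^{2}-P^{2}}=H-\tfrac{P^{2}}{2H}+O(r^{3})$, so the $k$--dependence enters only at relative order $r^{2}$. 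Introduce
\[
  F(r,a,\phi)=\frac{r}{n}\sqrt{H^{2}(S_{r,a,\phi})-P^{2}(S_{r,a,\phi})}-1 ,
\]
which vanishes for $r=0,\ \phi=0$ in flat space with $k=0$. Up to a normalizing constant and terms carrying a factor of $r$ or of $|k|$, the linearization $D_{\phi}F(0,a,0)$ equals $\Delta_{\mathbb{S}^{n}}+n$; its kernel $K$ is the $(n+1)$--dimensional span of the restrictions to $\mathbb{S}^{n}$ of the linear coordinate functions, matching the $(n+1)$ parameters in the center $a$.

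First I would carry out the infinite--dimensional step. With $\Pi$ the $L^{2}(\mathbb{S}^{n})$--projection onto $K$ and $\Pi^{\perp}=\mathrm{Id}-\Pi$, the map $\Pi^{\perp}\!\circ D_{\phi}F(0,a,0)$ is an isomorphism on $\Pi^{\perp}C^{2,\alpha}$, so the implicit function theorem gives, for small $r$ and $a$ in a fixed ball, a unique small $\phi(r,a)\in\Pi^{\perp}C^{2,\alpha}$ with $\Pi^{\perp}F(r,a,\phi(r,a))=0$, smooth in $(r,a)$ and with $\|\phi(r,a)\|_{C^{2,\alpha}}\to0$ as $r\to0$. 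The obstruction is the reduced map $\Psi(r,a)=\Pi\,F(r,a,\phi(r,a))\in K\cong\mathbb{R}^{n+1}$. Expanding $g$ and $k$ in normal coordinates about $a$ and computing $H$, $P$, the induced metric and $\phi(r,a)$ to the order at which the first spherical harmonic appears, the $H$--part reproduces Ye's term $\tfrac n2\nabla\mathrm{Sc}$ and the $-P^{2}/(2H)$--part produces precisely the $k$--bilinear combination ($\nabla|k|^{2}$, $\nabla(\tr k)^{2}$, and the two divergence terms) in the definition of $A_{\text{ST}}$. One obtains
\[
  \Psi(r,a)=c\,r^{m}A_{\text{ST}}(a)+r^{m}E(r,a),
\]
with $c\neq0$ and $m$ fixed depending on $n$, and $|E(r,a)|+|D_{a}E(r,a)|\le\omega(r)+C_{0}\,(|k|^{2}+|\mathrm{Ric}|)(a)\,|k|(a)\,|\nabla k|(a)$ where $\omega(r)\to0$ and $C_{0}=C_{0}(n)$.

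It remains to solve $\Psi(r,a)=0$. Dividing by $c\,r^{m}$ and extending to $r=0$ yields a $C^{1}$ map $\widetilde\Psi$ with $\widetilde\Psi(0,a)=A_{\text{ST}}(a)$; by hypothesis $\widetilde\Psi(0,p)=0$ and $D_{a}\widetilde\Psi(0,p)=\nabla A_{\text{ST}}(p)$ is invertible, and the quantitative assumption $C|(\nabla A_{\text{ST}})^{-1}|(|k|^{2}+|\mathrm{Ric}|)|k||\nabla k|<1$ at $p$ keeps $D_{a}\widetilde\Psi$ invertible on a ball of definite size around $p$ on which the above estimates hold. The implicit function theorem then gives a $C^{1}$ curve $r\mapsto a(r)$ with $a(0)=p$, $a(r)$ in that ball, and $\Psi(r,a(r))=0$ for $r\in(0,\delta)$. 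Put $S_{r}:=S_{r,a(r),\phi(r,a(r))}$; then $\sqrt{H_{S_{r}}^{2}-P_{S_{r}}^{2}}=n/r$, each $S_{r}$ is a normal graph over the geodesic sphere of radius $r$ about $a(r)$, and since $r\mapsto S_{r}$ is $C^{1}$ with normal speed close to $1$ (the graph functions and the drift $|a'(r)|$ being controlled) the leaves are pairwise disjoint, so $\mathcal{F}=\{S_{r}\}$ is a foliation; $a(r)\to p$ and $\diam S_{r}\to0$ give that it is centered at $p$, while $|B_{S_{r}}|\sim n/r$ and $\diam S_{r}\sim 2r$ give \eqref{regular}.

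The main obstacle is the expansion of $\Psi$: one must carry the computation of $H^{2}-P^{2}$ on the perturbed geodesic spheres far enough to isolate the first--harmonic component and recognize the exact combination of coefficients defining $A_{\text{ST}}$, while tracking throughout that $k$ is $O(1)$ rather than $O(r)$ --- so $P$, $P^{2}$ and their derivatives must be retained and only become subleading through the $1/H\sim r/n$ factor --- and keeping the remainder $E$ in the sharp form $(|k|^{2}+|\mathrm{Ric}|)|k||\nabla k|$ that produces the clean quantitative hypothesis. The two applications of the implicit function theorem are then routine.
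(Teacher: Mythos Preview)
Your overall strategy is the paper's: a Lyapunov--Schmidt reduction for $H^{2}-P^{2}$ on perturbed geodesic spheres, with the kernel of $\Delta_{\mathbb{S}^{n}}+n$ matched to the center parameter. You reverse the order of the two implicit-function-theorem steps (solving the $K^{\perp}$ component for $\phi$ first, then the $K$ component for the center $a$) relative to the paper, which solves for the center first; this is harmless.

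The genuine gap is your use of the quantitative hypothesis. You invoke it to ``keep $D_{a}\widetilde\Psi$ invertible on a ball''. But you have just asserted that $\widetilde\Psi(0,a)=A_{\text{ST}}(a)$ exactly, hence $D_{a}\widetilde\Psi(0,p)=\nabla A_{\text{ST}}(p)$, which is invertible by hypothesis; continuity alone then gives invertibility nearby and the implicit function theorem applies with no quantitative input. (Your error bound $|D_{a}E|\le \omega(r)+C_{0}(|k|^{2}+|\mathrm{Ric}|)|k||\nabla k|$ is inconsistent with $\widetilde\Psi(0,a)=A_{\text{ST}}(a)$, since the latter forces $E(0,a)\equiv 0$ and hence $D_{a}E(0,a)=0$; the $(|k|^{2}+|\mathrm{Ric}|)|k||\nabla k|$ combination does not sit in $D_{a}E$ but in $\partial_{r}\widetilde\Psi(0,p)$.)

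Where the quantitative hypothesis actually enters --- and this is the step you have not carried out --- is in verifying that the resulting family is a \emph{foliation}. The lapse of the parametrization is $\alpha=1+\langle a'(0),\nu\rangle+O(r)$, so one needs $|a'(0)|<1$. Differentiating $\widetilde\Psi(r,a(r))=0$ at $r=0$ gives $a'(0)=-(\nabla A_{\text{ST}}(p))^{-1}\,\partial_{r}\widetilde\Psi(0,p)$, and the paper computes $\partial_{r}\widetilde\Psi(0,p)$ explicitly: after the cancellations (the $H^{2}$ contribution and the $(H^{2}-P^{2})_{\varphi\varphi}$ term drop out by parity, cf.\ Lemma~\ref{1.3}), the only survivor is $\tfrac{1}{2}\widetilde\pi\big((H^{2}-P^{2})_{\varphi rr}(0,0,0)\varphi_{0}\big)$, which via the linearization $(P^{2})_{\varphi}=2P\big((\nabla_{\nu}\tr k-\nabla_{\nu}k(\nu,\nu))\varphi+2k(\nabla\varphi,\nu)\big)$ and the form of $\varphi_{0}$ (a combination of $\mathrm{Ric}$ and $k\ast k$) yields a term of schematic shape $(k\ast\nabla k)(k\ast k+\mathrm{Ric})$. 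This produces exactly $|a'(0)|\le C\,|(\nabla A_{\text{ST}})^{-1}|\,(|k|^{2}+|\mathrm{Ric}|)\,|k|\,|\nabla k|$, and the hypothesis is precisely that this is $<1$. Without this computation you have constructed a concentration of STCMC surfaces at $p$, but not shown they foliate.
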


\begin{theorem*}[Nonexistence and uniqueness of STCMC foliations]\ 
\begin{enum}
\item \label{i:stcmc-nonexistence-1} $A_\text{ST}(p) \neq 0$ for some point $p\in M$, then there exist no constant STCMC foliation regularly centered at $p$.
\item \label{i:stcmc-nonexistence-2} Let $p\in M$ be such that $A_\text{ST}(p)=0$, $\nabla A_\text{ST}(p)$ is invertible such that the foliation $\mathcal{F}$ of the previous theorem exists. If  $\mathcal{F}_2$ is a STCMC foliation regularly centered at $p$, then either $\mathcal{F}$ is a restriction of $\mathcal{F}_2$ or $\mathcal{F}_2$ is a restriction of $\mathcal{F}$.
\item Claims (\ref{i:stcmc-nonexistence-1}) and (\ref{i:stcmc-nonexistence-2}) also hold if instead of foliations we consider concentrations of regularly centered surfaces around $p$. 
\end{enum}
\end{theorem*}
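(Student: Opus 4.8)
The plan is to deduce all three items from a single \emph{a priori structure statement}: every STCMC hypersurface $S$ that is regularly centered at $p$ and has sufficiently small diameter is a normal graph over a geodesic sphere $S_\rho(q)$ with $q$ close to $p$, with $\rho$ comparable to $\diam S$ (in fact $\rho = n/\sqrt{H_S^2-P_S^2}$ up to higher order), and with graph function $f$ satisfying $\|f\|_{C^{2,\alpha}(S_\rho(q))}=o(\rho)$. I would obtain this by a blow-up argument. Fixing normal coordinates at $p$ and rescaling $S$ by $1/\diam S$, the bound $\sup_S|B_S|\,\diam S<\infty$ together with closedness of $S$ gives uniform $C^{1,\alpha}$ control, so a subsequence of rescalings converges to a closed embedded hypersurface $\Sigma\subset\R^{n+1}$. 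Under this rescaling $P_S=\tr_{g_S}k$ tends to $0$ (since $k$ is bounded near $p$ while the surface is dilated), so the equation $H_S^2-P_S^2=\mathrm{const}$ passes in the limit to $H_\Sigma\equiv\mathrm{const}$, whence by Alexandrov's theorem $\Sigma$ is a round sphere. Thus for small $\diam S$ the surface $S$ is a small-graph perturbation of a geodesic sphere, and feeding this back into the quasilinear elliptic STCMC equation — using Schauder estimates and the expansions of $H$ and $P$ over perturbed geodesic spheres established for the existence theorem — bootstraps the graph function to the stated $o(\rho)$ bound, which is exactly what the Lyapunov--Schmidt reduction needs. (If this structure statement is already recorded earlier, this paragraph is just its invocation.)

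Granting this, claim~\ref{i:stcmc-nonexistence-1} — and the nonexistence half of item (iii) — follows from the integrability condition of the reduction. Writing the STCMC equation for a graph $f$ of size $o(\rho)$ over $S_\rho(q)$ and projecting onto the approximate kernel, which is spanned by the restrictions to the sphere of the ambient normal-coordinate functions (the infinitesimal translations), the leading term of this projection is, by the computation already carried out for the existence theorem, a fixed nonzero multiple of a power of $\rho$ times $A_\text{ST}(q)$, with the remaining terms controlled by $\|f\|$ and higher powers of $\rho$ and hence negligible relative to the leading one. If $A_\text{ST}(p)\neq0$, then $A_\text{ST}(q)\neq0$ for all $q$ in a neighborhood of $p$ by continuity, so for every sufficiently small $\rho$ this kernel projection is bounded away from $0$ and cannot vanish. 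But an STCMC surface solves the \emph{full} equation, so in particular its kernel projection vanishes; hence no regularly centered STCMC foliation, nor regularly centered concentration, can have leaves of arbitrarily small diameter clustering at $p$ — contradicting that a foliation centered at $p$ (respectively a concentration around $p$) possesses exactly such leaves.

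For claim~\ref{i:stcmc-nonexistence-2} — and the uniqueness half of item (iii) — we use the hypotheses in the opposite direction: $A_\text{ST}(p)=0$ and $\nabla A_\text{ST}(p)$ invertible are precisely what lets the existence theorem solve the reduced finite-dimensional equation. For each small $\rho$ there is a \emph{unique} center $q=q(\rho)$ near $p$ making the kernel projection vanish (implicit function theorem, via invertibility of $\nabla A_\text{ST}(p)$), and, on the orthogonal complement of the kernel, a unique graph function $f=f_\rho$ of size $o(\rho)$ solving the remaining equation. Consequently, for each small $\rho$ there is exactly one STCMC hypersurface that is a normal $o(\rho)$-graph over a geodesic sphere of radius $\rho$ centered near $p$, namely the leaf $S_\rho\in\mathcal{F}$. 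By the structure statement of the first paragraph, every leaf of $\mathcal{F}_2$ of small enough diameter is of this form, hence equals the corresponding leaf of $\mathcal{F}$; matching on an open range of radii then forces one of $\mathcal{F},\mathcal{F}_2$ to be a restriction of the other, since both are foliations with connected leaves. The identical reasoning handles a regularly centered concentration $\mathcal{F}_2$: each of its small-diameter members coincides with a leaf of $\mathcal{F}$, so $\mathcal{F}_2$ restricted to small diameters is contained in $\mathcal{F}$.

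The main obstacle is the a priori structure statement of the first paragraph: upgrading the single scale-invariant bound in the definition of \emph{regularly centered} to the strong conclusion that small leaves are $o(\rho)$-graphs over geodesic spheres. The blow-up step must rule out noncompact or multiply covered limits and keep track of the center $q$ and the radius $\rho$ in terms of $\diam S$ and of $\sqrt{H_S^2-P_S^2}$, and the bootstrap requires the precise remainder structure of the $H$- and $P$-expansions on perturbed geodesic spheres; once that input is in place, the rest is Lyapunov--Schmidt bookkeeping already assembled for the existence theorem.
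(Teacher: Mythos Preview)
Your proposal is correct and follows essentially the same route as the paper: blow-up of the leaves using the regularly-centered bound to obtain round-sphere limits (via Alexandrov), hence the graph-over-geodesic-sphere parametrization; projection onto the translational kernel to read off $A_\text{ST}(p)=0$ as the necessary condition; and the uniqueness clause of the implicit function theorem from the existence proof for item~(ii). The paper packages the first step as Lemmas~\ref{lemma 2.1} and~\ref{inesta} together with results quoted from Ye, and then carries out exactly the kernel/$K^\bot$ computation you describe; your identification of the multi-sheet issue in the blow-up as the main technical point is accurate (the paper handles it by a three-sheet maximum-principle argument yielding Alexandrov-embeddedness).
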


\subsection*{Question}
With this results one can wonder if the local STCMC $1$-form characterizes the local behaviour of any possible definition of center of mass, that is if any reasonable definition of center of mass given by surfaces should concentrate around points with vanishing local STCMC $1$-form ($\mathrm{A_{ST}}_p=0$) when considering it as a local foliation or in the small sphere limit.

\subsection{Results for CE surfaces}
Even if the constant expansion foliations are not suitable to describe the center of mass of a system, this equation is interesting from a mathematical viewpoint. Indeed, there are two differnet ways to construct this foliations leading to different foliation conditions. We expect that this results also can help to understand the reason why these surfaces are not suitable to describe a center of mass, even if they look like the most direct generalization to the CMC surfaces.  
\begin{definition}

For any tangent vector $V \in TM$ we define the \emph{local constant expansion $1$-form}
\begin{equation}\label{eq:ce-one-form0}
\begin{split}
 \mathrm{\mathrm{A_{CE}}}(V)  =\frac{n+2}{n+3} \nabla_V \tr k -2 \diver k(V) 
  \end{split}
\end{equation}

and we also introduce  the \emph{second local constant expansion $1$-form}
\begin{equation}
\label{eq:ce-one-form}
\begin{split}
  \mathrm{\hat{A}_{CE}}^\pm(V)  =&-\frac{1}{2} \nabla_V \mathrm{Sc} \pm  \frac{1}{3(n+3)(n+5)}\Big( \frac{2(n^2 +6n +10)}{(n+3)} \langle \mathrm{Ric}(V, \cdot) ,\nabla \tr k \rangle - 4 \langle \mathrm{Ric} ,\nabla  k(V,\cdot) \rangle\\
   &-2 \langle \mathrm{Ric},\nabla_V  k \rangle  - \frac{n^3 +14n^2 +52n +60}{n(n+3)} \mathrm{Sc} \nabla_V \tr k \Big)  
  \end{split}
\end{equation}
\end{definition}
 \begin{theorem*}
 Let  $p\in M$ be such that at $p$, $\mathrm{\mathrm{A_{CE}}}=0$, $\nabla \mathrm{\mathrm{A_{CE}}} $ is invertible  and $k=0$. If in addition $\frac{1}{(n+3)} |(\nabla \mathrm{A_{CE}})^{-1}|\, |   \mathrm{\hat{A}_{CE}}^\pm| <1  $ at $p$, then exist $\delta^\pm>0$ and a smooth foliation $\mathcal{F}^\pm= \{ S_r^\pm : r\in (0, \delta^\pm)  \} $ of constant expansion  spheres with $H_{S_r^{\pm}}\pm P_{S_r^\pm}= \frac{n}{r}$. This foliation  $\mathcal{F}^\pm$ is regularly centered at $p$ and its leaves $S_r^\pm$ are normal graphs over geodesic spheres of radius~$r$. 
\end{theorem*}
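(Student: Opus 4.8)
The plan is to run the Lyapunov--Schmidt scheme of Ye, now for the prescribed-curvature operator $\theta^\pm=H\pm P$ in place of $H$. Fix geodesic normal coordinates centred at $p$. For small $r>0$, a centre $\xi$ in a ball around $p$, and a function $u$ on $S^n$ of small $C^{2,\alpha}$-norm, let $\Sigma_{r,\xi,u}$ be the normal graph of $r\,u$ over the geodesic sphere $S_r(\xi)$, and set
\[
  \Phi^\pm(r,\xi,u)\;=\;r\,\theta^\pm\big(\Sigma_{r,\xi,u}\big)-n .
\]
The scaling by $r$ makes $\Phi^\pm$ extend smoothly across $r=0$, with $\Phi^\pm(0,\cdot,0)=0$ and $D_u\Phi^\pm(0,\cdot,0)=-(\Delta_{S^n}+n)$, the Jacobi operator of the round unit sphere. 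The hypothesis $k=0$ at $p$ enters here: since $|k|=O(\operatorname{dist}(\cdot,p))$ it forces $P=\tr_{g_S}k=O(r)$ along the relevant surfaces, so that $\theta^\pm$ is a genuine perturbation of $H$ and no zeroth-order contribution of $k$ contaminates the linearisation; it also makes the quadratic-in-$k$ terms appearing below of genuinely lower order. The kernel of $\Delta_{S^n}+n$ is the $(n+1)$-dimensional space $K=\operatorname{span}\{x^1|_{S^n},\dots,x^{n+1}|_{S^n}\}$ of restricted linear functions, i.e.\ the infinitesimal translations of the centre; write $\pi$ for the $L^2$-orthogonal projection onto $K$ and $\pi^\perp=\operatorname{id}-\pi$.

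First I would carry out the \emph{auxiliary step}. Since $\pi^\perp D_u\Phi^\pm(0,\cdot,0)\pi^\perp$ is an isomorphism of $\pi^\perp C^{2,\alpha}(S^n)$, the implicit function theorem yields, for $r$ small and $\xi$ in a suitable ball around $p$, a unique small $u^\pm(r,\xi)\in\pi^\perp C^{2,\alpha}(S^n)$ with $\pi^\perp\Phi^\pm(r,\xi,u^\pm(r,\xi))=0$, smooth in $(r,\xi)$ and with quantitative bounds on $\|u^\pm(r,\xi)\|_{C^{2,\alpha}}$ read off from the geodesic-sphere expansions of $H$ and $P$. Zeros of $\Phi^\pm$ are then exactly zeros of the \emph{reduced map}
\[
  \Psi^\pm(r,\xi)\;=\;\pi\,\Phi^\pm\big(r,\xi,u^\pm(r,\xi)\big)\ \in\ K\cong\R^{n+1}.
\]

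The core of the proof, and the step I expect to be the main obstacle, is the asymptotic expansion of $\Psi^\pm$ as $r\to0$. One inserts the standard Taylor expansions of the mean curvature and second fundamental form of geodesic spheres and of the tensor $k$ at $p$ (using $k(p)=0$), evaluates the resulting integrals $\int_{S^n}x^ix^j$ and $\int_{S^n}x^ix^jx^kx^\ell$ (most leading contributions being killed by parity), invokes the contracted Bianchi identity $\diver\mathrm{Ric}=\tfrac12\nabla\mathrm{Sc}$, and tracks the feedback of the corrector $u^\pm(r,\xi)$ through the first and second variations of $\theta^\pm$. Crucially, the variation of $P$ coming from the change of the induced metric generates the additional $\diver k$ and $\langle\mathrm{Ric},\nabla k\rangle$-type terms; together with the $\nabla\mathrm{Sc}$ term produced by $H$ and the $\mathrm{Sc}\,\nabla\tr k$ term, this should assemble, after a long but mechanical computation, into
\[
  \Psi^\pm(r,\xi)\;=\;c_n\,r^{2}\Big(\mathrm{A_{CE}}(\xi)+\tfrac{1}{n+3}\,r\,\mathrm{\hat{A}_{CE}}^\pm(\xi)\Big)\;+\;O(r^{4}),
\]
with a nonzero dimensional constant $c_n$ depending only on $n$, uniformly for $\xi$ near $p$. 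The exact dimensional constants of both $1$-forms, and in particular the ratio $1/(n+3)$ between the coefficients of $\mathrm{A_{CE}}$ and $\mathrm{\hat{A}_{CE}}^\pm$, have to come out of this computation.

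Finally I would solve the reduced equation. Since $\mathrm{A_{CE}}(p)=0$ and $\nabla\mathrm{A_{CE}}(p)$ is invertible, $\mathrm{A_{CE}}(\xi)=\nabla\mathrm{A_{CE}}(p)(\xi-p)+O(|\xi-p|^2)$; dividing $\Psi^\pm$ by $c_n r^{2}$ and applying the implicit function theorem (or a contraction) about $\xi=p$ produces, for each small $r$, a unique solution $\xi^\pm(r)$, smooth in $r$, with
\[
  \xi^\pm(r)-p\;=\;-\tfrac{r}{n+3}\,\big(\nabla\mathrm{A_{CE}}(p)\big)^{-1}\mathrm{\hat{A}_{CE}}^\pm(p)\;+\;o(r).
\]
The smallness hypothesis $\tfrac1{n+3}\,|(\nabla\mathrm{A_{CE}})^{-1}|\,|\mathrm{\hat{A}_{CE}}^\pm|<1$ is exactly what keeps $|\xi^\pm(r)-p|<r$ for small $r$ — so that the solution stays in the ball where the auxiliary step was performed — and, equivalently, what makes the drift velocity of the centres, $-\tfrac1{n+3}\big(\nabla\mathrm{A_{CE}}(p)\big)^{-1}\mathrm{\hat{A}_{CE}}^\pm(p)$, have norm $<1$, hence the normal speed of the family $r\mapsto S_r^\pm:=\Sigma_{r,\xi^\pm(r),\,u^\pm(r,\xi^\pm(r))}$ strictly positive. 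From positivity of the normal speed one gets that the leaves $S_r^\pm$ are pairwise disjoint and sweep out a punctured neighbourhood of $p$, i.e.\ a foliation $\mathcal F^\pm=\{S_r^\pm:r\in(0,\delta^\pm)\}$; by construction $H_{S_r^\pm}\pm P_{S_r^\pm}=n/r$ and each $S_r^\pm$ is a normal graph over the geodesic sphere of radius $r$ about $\xi^\pm(r)$, with smooth dependence on $r$ inherited from the implicit function theorem. Regular centeredness is then immediate, since $S_r^\pm$ is $C^1$-close to $S_r(p)$: $\sup_{S_r^\pm}|B_{S_r^\pm}|\cdot\diam S_r^\pm$ stays bounded uniformly in $r$, while $\diam S_r^\pm\to0$ and $\bigcap_{r_0>0}\overline{\bigcup_{r<r_0}S_r^\pm}=\{p\}$. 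Choosing $\delta^\pm>0$ small enough completes the construction for each sign.
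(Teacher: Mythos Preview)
Your proposal is correct and follows the same Lyapunov--Schmidt strategy as the paper, with one organizational difference worth noting: you solve the $K^\perp$-equation first (for the graph $u^\pm(r,\xi)$) and then the kernel equation (for the center $\xi^\pm(r)$), whereas the paper reverses this order, first applying the implicit function theorem to the kernel projection to obtain $\tau=\tau(r,\varphi)$ and only then solving on $K^\perp$ for $\varphi$. Both orderings are legitimate and lead to the same surfaces, the same expression $\partial_r\xi^\pm(0)=\mp\tfrac{1}{n+3}(\nabla\mathrm{A_{CE}})^{-1}\mathrm{\hat A_{CE}}^\pm$, and hence the same lapse condition.

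Two small caveats. First, your stated reason for needing $k(p)=0$ is not quite the right one: the linearization $D_u\Phi^\pm(0,\cdot,0)$ equals $-(\Delta_{S^n}+n)$ regardless of $k(p)$, simply because $rP\to 0$ as $r\to 0$. The hypothesis $k(p)=0$ is actually used (i) so that $P_g(0,0,0)=0$, which in the paper's ordering is what makes the $K^\perp$-equation divided by $r^2$ finite at $r=0$, and in your ordering forces the corrector to satisfy $u^\pm(r,p)=O(r^2)$ rather than $O(r)$; and (ii) to kill the $2k(\nabla\varphi_0,\nu)$ term when computing $\tilde\pi\big((H\pm P)_{r\varphi}(0,0,0)\varphi_0\big)$, so that the $r^3$-coefficient comes out as the clean $1$-form $\mathrm{\hat A_{CE}}^\pm$. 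Second, your displayed expansion of $\Psi^\pm$ is literally correct only at $\xi=p$ and to first order in $\xi-p$: for general $\xi$ near $p$ the feedback of $u^\pm(r,\xi)$ through $(P_g)_\varphi(0,\xi,0)$ produces extra $k(\xi)\!\ast\!\nabla k(\xi)$ terms at order $r^2$, which vanish together with their $\xi$-derivative at $\xi=p$ precisely because $k(p)=0$. This is enough for the implicit function theorem, so the argument goes through, but the expansion as you wrote it is slightly overstated.
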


  This result  is not a generalization of the CMC foliation of \cite{Ye}. However one can find a constant expasion foliation that generalizes the CMC one by adding some extra assumptions.
 
 \begin{definition}
 For $V,W \in TM$ define the $2$-tensor:  
\begin{equation}
    \begin{split}
    \hat{T}(V,W) :=&\frac{4}{(n+ 3)(n+5)} \big( \langle \nabla_W k,  2\nabla  k(V, \cdot)+  \nabla_V \, k \rangle - \frac{2n +5}{(n+3)^2} (\nabla_V \tr k \nabla_W \tr k \\
    &+ 2 \langle \nabla_W k(V, \cdot),  \nabla \tr k \rangle ) \big)
    \end{split}
\end{equation}
 \end{definition}
\begin{theorem*}
 Let  $p\in M$ be  such  that at $p$, $\mathrm{A_{CE}}=  \mathrm{\hat{A}_{CE}}^\pm =0$, $k= \nabla \mathrm{A_{CE}}=0$, $ \Hess\mathrm{A_{CE}}=0 $ and $\nabla   \mathrm{\hat{A}_{CE}}^\pm  + \hat{T}$  is invertible, then there exist a constant $C $ depending on the dimension of $M$ such that if at $p$, $ C |(\nabla   \mathrm{\hat{A}_{CE}}^\pm  + \hat{T})^{-1}| \left( |\nabla k|\,( |\mathrm{Ric}|+|\nabla k|  +|\nabla \nabla k|) +|\nabla \nabla \nabla k| \right)  <1  $ then there exist  $\delta^\pm>0$ and a smooth foliation $\mathcal{F}^\pm= \{ S_r^\pm : r\in (0, \delta^\pm)  \} $ of constant expansion  spheres with $H_{S_r^\pm}\pm P_{S_r^\pm}= \frac{n}{r}$. $\mathcal{F}^\pm$ is a foliation regularly centered at $p$ and its leaves $S_r^{\pm}$ can be express as normal graphs over geodesic spheres of radius $r$. 
\end{theorem*}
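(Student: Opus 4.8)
The plan is to run the same Lyapunov--Schmidt reduction used for the third theorem above and, in the time‑symmetric case, by Ye \cite{Ye}; the role of the vanishing hypotheses is only to push the obstruction one order higher, so that it is controlled by $\nabla\hat{A}_{CE}^\pm+\hat{T}$ rather than by $\nabla\mathrm{A_{CE}}$. Work in $g$-geodesic normal coordinates centered at $p$ and rescale $g$ by $r^{-2}$, so that a geodesic sphere of radius $r$ about a nearby center $c=c(r,a)=\exp_p(\rho(r)\,a)$, with $a\in\R^{n+1}$, $|a|\le1$, and $\rho(r)$ a power of $r$ to be fixed, becomes a unit round sphere in a metric $\bar g_r$ converging smoothly to the Euclidean metric as $r\to0$. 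A candidate leaf is the normal graph over this sphere of a function $u\in C^{2,\alpha}(S^n)$. Since $k(p)=0$, the term $P=\tr_{g_S}k$ only contributes at higher order in $r$, so the rescaled operator $u\mapsto r(H\pm P)-n$ has linearization at $(r,u)=(0,0)$ equal to the Euclidean Jacobi operator $L_0=\Delta_{S^n}+n$, whose kernel $K\cong\R^{n+1}$ is spanned by the restrictions of the linear coordinate functions.

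Let $\Pi,\Pi^\perp$ be the $L^2(S^n)$-projections onto $K$ and $K^\perp$. Since $L_0\colon K^\perp\cap C^{2,\alpha}\to K^\perp\cap C^{0,\alpha}$ is an isomorphism with $r$-uniformly bounded inverse, a contraction argument (as in the earlier proofs, and insensitive to the vanishing hypotheses at $p$) yields, for all small $r$ and all $|a|\le1$, a unique small $u=u(r,a)\in K^\perp$ with $\Pi^\perp\big(r(H\pm P)-n\big)=0$, depending smoothly on $(r,a)$. It remains to solve the reduced equation $\Phi^\pm(r,a)\definedas\Pi\big(r(H\pm P)-n\big)\big|_{u=u(r,a)}=0$ in $K\cong\R^{n+1}$. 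Expanding $H$ and $P$ over normal graphs of geodesic spheres using the Taylor expansions of $g$ and $k$ in normal coordinates --- the same computation that isolated $\mathrm{A_{CE}}$, $\hat{A}_{CE}^\pm$, $\hat{T}$ for the previous theorems, carried one order further --- gives
\[
\Phi^\pm(r,a)=r^{m}\Big[\,c_n\,\mathrm{A_{CE}}\big(c(r,a)\big)+r^{m'}\big(c_n'\,\hat{A}_{CE}^\pm(p)+c_n''\,(\nabla\hat{A}_{CE}^\pm+\hat{T})(p)(a,\cdot)\big)+E(r,a)\,\Big],
\]
with universal constants $c_n,c_n',c_n''$ and a remainder $E$. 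By $\mathrm{A_{CE}}(p)=\nabla\mathrm{A_{CE}}(p)=\Hess\mathrm{A_{CE}}(p)=0$ the term $\mathrm{A_{CE}}(c(r,a))$ is $O(\rho(r)^3)$, hence negligible against the $\hat{A}_{CE}^\pm$/$\hat{T}$ term for the appropriate choice of $\rho$; by $\hat{A}_{CE}^\pm(p)=0$ the $a$-independent piece vanishes; and $E$ is bounded, uniformly in $|a|\le1$ and up to $o(1)$ corrections, by a universal multiple of $|\nabla k|(|\mathrm{Ric}|+|\nabla k|+|\nabla\nabla k|)+|\nabla\nabla\nabla k|$ at $p$.

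Since $\nabla\hat{A}_{CE}^\pm+\hat{T}$ is invertible at $p$, the linear map $a\mapsto(\nabla\hat{A}_{CE}^\pm+\hat{T})(p)(a,\cdot)$ is an automorphism of $\R^{n+1}$, so $\Phi^\pm(r,a)=0$ is equivalent to a fixed-point problem $a=\Psi_r(a)$; the estimate on $E$ makes $\Psi_r$ a self-map and contraction of a closed ball of $r$-independent radius precisely when $C\,|(\nabla\hat{A}_{CE}^\pm+\hat{T})^{-1}|\,\big(|\nabla k|(|\mathrm{Ric}|+|\nabla k|+|\nabla\nabla k|)+|\nabla\nabla\nabla k|\big)<1$ at $p$, i.e.\ exactly under the stated hypothesis. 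This produces a unique $a=a(r)$ with $\Phi^\pm(r,a(r))=0$ for $r\in(0,\delta^\pm)$, smooth in $r$ (the implicit function theorem applies once $\partial_a\Phi^\pm$ is checked to be invertible along the solution). Let $S_r^\pm$ be the corresponding leaf. Since $\rho(r)\to0$ and $u(r,a(r))\to0$ faster than $r$, $S_r^\pm$ is an $o(r)$ normal graph over the geodesic sphere of radius $r$ about $c(r,a(r))$, so $|B_{S_r^\pm}|\sim 1/r$ and $\diam S_r^\pm\sim 2r$, giving $\sup_S(|B_S|\diam S)<\infty$ and $\diam S_r^\pm\to0$; and because the $r$-variation dominates the $a$- and $u$-variations, $(r,\omega)\mapsto$ the point of $S_r^\pm$ over $\omega\in S^n$ is a diffeomorphism of $(0,\delta^\pm)\times S^n$ onto a punctured neighborhood of $p$, so the leaves are pairwise disjoint and $\mathcal F^\pm$ is a smooth foliation regularly centered at $p$.

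The main obstacle is the reduced-equation expansion: one must carry the asymptotics of $\Phi^\pm$ one order beyond what was needed for the third theorem, verify that the vanishing of $\mathrm{A_{CE}}$ up to second order together with $k(p)=0$ forces all intermediate terms to drop out, fix the scaling $\rho(r)$ of the translation parameter so that $(\nabla\hat{A}_{CE}^\pm+\hat{T})(p)(a,\cdot)$ is genuinely the leading $a$-dependence, and bookkeep the remainder so that its size is precisely the combination $|\nabla k|(|\mathrm{Ric}|+|\nabla k|+|\nabla\nabla k|)+|\nabla\nabla\nabla k|$ appearing in the hypothesis. Everything else is a routine adaptation of Ye's scheme and of the earlier theorems in the paper.
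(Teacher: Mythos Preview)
Your overall Lyapunov--Schmidt strategy is the same as the paper's, and the order in which you solve the $K^\perp$- and $K$-projections (you do $K^\perp$ first, the paper does $K$ first) makes no essential difference. However, you have misplaced the role of the smallness hypothesis
\[
C\,\bigl|(\nabla\hat{A}_{CE}^\pm+\hat{T})^{-1}\bigr|\,\bigl(|\nabla k|(|\mathrm{Ric}|+|\nabla k|+|\nabla\nabla k|)+|\nabla\nabla\nabla k|\bigr)<1.
\]
In the paper this condition is \emph{not} used to solve the reduced equation at all. Once $\hat A_{CE}^\pm(p)=0$ and $\nabla\hat A_{CE}^\pm+\hat T$ is invertible, the implicit function theorem already yields the family $\{S_r\}$ of CE surfaces; no contraction estimate, and hence no smallness bound, is needed for existence. (You even acknowledge this when you remark that ``the implicit function theorem applies once $\partial_a\Phi^\pm$ is checked to be invertible'' --- that sentence makes the preceding contraction argument superfluous.)

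Where the smallness hypothesis does enter is in verifying that the resulting family is actually a \emph{foliation}. The paper computes the lapse $\alpha=1+\bigl\langle\frac{\partial\tau}{\partial r}\big|_{r=0},\nu\bigr\rangle$ of the parametrization and shows, by differentiating the reduced equation $\Tilde\pi\bigl(r^{-3}((H\pm P)-n)\bigr)=0$ in $r$ at $r=0$, that
\[
\frac{\partial\tau}{\partial r}\Big|_{r=0}=(\nabla\hat A_{CE}^\pm+\hat T)^{-1}\cdot\bigl(\text{terms bounded by the combination above}\bigr),
\]
so that the smallness hypothesis gives $\bigl|\frac{\partial\tau}{\partial r}\big|_{r=0}\bigr|<1$ and hence $\alpha>0$. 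Your corresponding step --- ``because the $r$-variation dominates the $a$- and $u$-variations, $(r,\omega)\mapsto\cdots$ is a diffeomorphism'' --- is the heart of the matter and cannot be dismissed in one clause; it is precisely here that the quantitative hypothesis is required, and you have not used it. Relatedly, you never fix the scaling $\rho(r)$ of the center, so the expansion of $\Phi^\pm$ and the meaning of the ``closed ball of $r$-independent radius'' remain ambiguous. If you intend $\rho(r)=r$ and $|a|<1$, then your ball constraint does encode the lapse condition, but you need to say so and to check that $|a(0)|<1$ is exactly what the smallness hypothesis delivers.
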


\begin{theorem*}[Nonexistence and uniqueness constant expansion]\
\begin{enum}
    \item
    If at a point $p$, $\mathrm{A_{CE}} \neq 0$ or $k \neq 0$, then there exist no constant expansion foliation regularly centered at $p$.
    \item Assume that at $p$, $\mathrm{A_{CE}}=0$, $k=0 $, $\nabla A_{CE}$ is invertible and that the foliation of the antepenultimate theorem $ \mathcal{F}$ exist. If  $ \mathcal{F}_2$ is a constant expansion foliation regularly centered at $p$, then either $\mathcal{F}$ is a restriction of $\mathcal{F}_2$ or $\mathcal{F}_2$ is a restriction of $\mathcal{F}$.
    \item Assume that at $p$, $\mathrm{A_{CE}}=  \mathrm{\hat{A}_{CE}}^\pm =0$, $k= \nabla \mathrm{A_{CE}}=0$, $ \Hess\mathrm{A_{CE}}=0 $, $\nabla   \mathrm{\hat{A}_{CE}}^\pm  + \hat{T}$  is invertible, and that the foliation of the previous theorem  $ \mathcal{F}$ exist. If  $ \mathcal{F}_2$ is a constant expansion foliation regularly centered at $p$, then either $\mathcal{F}$ is a restriction of $\mathcal{F}_2$ or $\mathcal{F}_2$ is a restriction of $\mathcal{F}$.
    \item The previous claims also hold if instead of foliations we consider     concentrations of regularly centered surfaces around $p$. 
\end{enum}
\end{theorem*}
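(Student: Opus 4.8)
The proof runs parallel to that of the STCMC nonexistence/uniqueness theorem stated above, which itself follows Ye \cite{Ye}. The crucial preliminary fact is that the regularity bound \eqref{regular} forces every leaf $S$ of a regularly centered constant expansion foliation --- or every member of a regularly centered constant expansion concentration --- with $\diam S$ small, to be a normal graph of a function $u_S$ over a geodesic sphere $S_{\rho(S)}(a(S))$ with $a(S)\to p$, $\rho(S)\to 0$ and $\rho(S)^{-1}\norm{u_S}_{C^{2,\alpha}}\to 0$. I would prove this by rescaling $S$ by $\rho\sim\diam S$, extracting from \eqref{regular} a $C^{1,\alpha}$-subconvergent limit, upgrading to $C^{2,\alpha}$ through the prescribed mean curvature equation $H\pm P$ that $S$ satisfies (note the rescaled $\tilde P\to0$, so the limit equation is $\tilde H=n$ and the limit is a round unit sphere), and then reading off graphicality over a geodesic sphere of the appropriate radius and center. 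After this step every small leaf lies in the domain where the Lyapunov--Schmidt reductions of the three existence theorems above operate, and the whole statement reduces to an analysis of those reductions.

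\textbf{Nonexistence, claim~(i).} Insert the graph ansatz into $\theta^{\pm}(S)=H(S)\pm P(S)\equiv\text{const}$ and Taylor expand in $\rho$ about $S_\rho(a)$. Because $P=\tr_{g_S}k$ enters already at order $\rho^{0}$ --- with $P_0(\omega)=\tr k-k(\omega,\omega)$ carrying the full value $k(a)$ --- the value of $k$ appears in the symbol of the linearized operator $L_0\pm\delta P$ one order below the ``flat'' operator $L_0=-\rho^{-2}(\Delta_{\IS^n}+n)$: concretely $\delta P$ is of order $\rho^{-1}$ with coefficients built from $k(a)$, and its trace-free part destroys the clean translational kernel of $L_0$ spanned by the first spherical harmonics. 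Carrying the resulting solvability analysis through (including the pure-trace case and the passage from ``no good reduction'' to genuine nonexistence, also for concentrations) forces $k(p)=0$. Granting $k(p)=0$, the first non-trivial first--harmonic (cokernel) projection of $\theta^{\pm}-\text{const}$ occurs at order $\rho$, coming from $\pm\rho P_1$ with $P_1(\omega)=\nabla_\omega\tr k-(\nabla_\omega k)(\omega,\omega)$; up to a nonzero dimensional factor it equals $\rho\,\mathrm{A_{CE}}(a)$, and since $a(S)\to p$ this forces $\mathrm{A_{CE}}(p)=0$. This is exactly the point where the constant expansion case is stricter than the STCMC case: $A_\text{ST}$ is assembled from quadratic expressions in $k$ and so already encodes the value of $k$ at $p$, whereas $\mathrm{A_{CE}}$ is linear in $k$ and cannot, which is why the separate requirement $k(p)=0$ is unavoidable. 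The argument used only \eqref{regular} and the constant expansion equation, hence applies verbatim to regularly centered concentrations; this is claim~(iv) for claim~(i).

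\textbf{Uniqueness, claims~(ii) and~(iii).} In~(ii) we stand in the hypotheses of the antepenultimate existence theorem ($\mathrm{A_{CE}}(p)=0$, $k(p)=0$, $\nabla\mathrm{A_{CE}}(p)$ invertible, the smallness condition, and $\mathcal{F}$ exists). By the reduction step any leaf $S$ of a competing regularly centered constant expansion foliation $\mathcal{F}_2$ with $\diam S$ small is a normal graph of a small function over a geodesic sphere, hence solves the same fixed-point problem whose solution, at the value $r$ determined by $\theta^{\pm}(S)=\tfrac nr$, is unique in the relevant small ball --- this uniqueness being precisely the contraction-mapping input used to build $\mathcal{F}$. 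Therefore $S$ coincides with the leaf $S_r$ of $\mathcal{F}$. Thus $\mathcal{F}$ and $\mathcal{F}_2$ share all leaves with sufficiently small diameter; since both foliate punctured neighborhoods of $p$, this common family is an initial segment $\{S_r:r\in(0,\delta')\}$ of each, and one foliation is a restriction of the other. Claim~(iii) is the same argument with $\nabla\mathrm{A_{CE}}$ replaced by the higher-order reduced operator $\nabla\mathrm{\hat{A}_{CE}}^{\pm}+\hat T$ and with the additional degeneracy hypotheses $\mathrm{\hat{A}_{CE}}^{\pm}(p)=0$, $\nabla\mathrm{A_{CE}}(p)=0$, $\Hess\mathrm{A_{CE}}(p)=0$ built into the expansion; the reduction and its uniqueness are supplied by the penultimate existence theorem.

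\textbf{Concentrations and the main difficulty.} Neither the nonexistence argument nor the uniqueness argument used continuity of the parametrization or disjointness of the leaves, except in the very last bookkeeping turning ``common leaves'' into ``restriction''; dropping that step one still concludes that every surface of a regularly centered constant expansion concentration with small diameter is a leaf of $\mathcal{F}$, which completes claim~(iv). The two genuinely laborious inputs are: the a priori reduction --- extracting from \eqref{regular} alone the graph-over-geodesic-sphere description with the quantitative rate $\rho^{-1}\norm{u_S}_{C^{2,\alpha}}\to0$, uniformly over all leaves at once --- and pushing the Taylor expansion of $H\pm P$ far enough, to order $\rho$ to isolate $\mathrm{A_{CE}}$ and, once $\mathrm{A_{CE}}$, $\nabla\mathrm{A_{CE}}$ and $\Hess\mathrm{A_{CE}}$ vanish, to the order where $\mathrm{\hat{A}_{CE}}^{\pm}+\hat T$ appears, while keeping track of the feedback of $u_S$ through the nonlinearity. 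I expect the a priori reduction together with the $k(p)=0$ step to be where the real work lies, since neither can be read off from the formal expansion alone.
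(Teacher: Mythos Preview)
Your overall framework matches the paper: reduce each small leaf to a normal graph $\varphi^*$ with $\pi(\varphi^*)=0$ over a geodesic sphere with perturbed center (the paper packages this as Lemmas~\ref{lemma 2.1} and~\ref{inesta} together with the adaptation of \cite[Corollary~2.1, Lemma~2.3]{Ye}), then read off constraints from the $K$ and $K^\perp$ projections of the rescaled equation. Parts~(ii)--(iv) are essentially as the paper argues: once the description $(\tau(r),\varphi^*(r))$ with $\tau(0)=0$, $\varphi^*\to 0$ and $\pi(\varphi^*)=0$ is in hand, the uniqueness clause of the implicit function theorem used in Theorems~\ref{priCE} and~\ref{secCMC} identifies $(\tau,\varphi^*/r^2)$ with the data constructed there, and nothing in this uses disjointness of leaves or continuous parametrization.

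The divergence is in part~(i), specifically the step forcing $k(p)=0$. The paper does \emph{not} argue via the linearized operator. It works with the $K^\perp$ projection: from $L\varphi^* = \tfrac{1}{3}\mathrm{Ric}_{ij}x^ix^j\, r^2 \pm r\,P_g(r,\tau,0)+\ldots$ and the a~priori smallness of $\varphi^*$ it argues that $P_g(r,\tau,0)/r$ stays bounded as $r\to 0$; since $P_g(0,0,0)=\tr k - k_{ij}x^ix^j$, this forces $k(p)=0$. Only with this in hand does the $K$-projection at order $r^2$ yield $\mathrm{A_{CE}}(p)=0$. Your proposed mechanism---that $\delta P$, entering at order $\rho^{-1}$, ``destroys the translational kernel'' of $L_0\sim\rho^{-2}$---does not give nonexistence. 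First, $\delta P$ is one full order below $L_0$, so for small $\rho$ it only perturbs the kernel by $O(\rho)$ rather than destroying it. Second, and more fundamentally, a perturbed kernel is not an obstruction to existence: obstructions live in the first-harmonic projection of the \emph{inhomogeneous} term in the expansion of $\theta^\pm-\mathrm{const}$, not in the kernel of the linearization. Your parenthetical ``including the pure-trace case'' is a genuine warning against your own mechanism---when $k$ is pure trace, $P$ is constant on every hypersurface, $\delta P$ vanishes identically, the kernel is untouched, and your argument gives no constraint whatsoever; it is the $K^\perp$ equation, not the linearization, that the paper uses to engage the value of $k(p)$.
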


\begin{remark}
    Note that $k$ plays a more prominent role in this foliations, so that we have to ask for stronger  conditions on $k$. In \cite{Nerz} Nerz reduced the decay conditions required for asymptotically flat manifolds to have a constant expansion foliation, when comparing this result to the STCMC foliation one sees that the constant expansion one requires a higher decay of $k$ in order to exist. This feature is reflected here, since our condition that $k=0$ could be seen as an analogon of this  higher decay.  
\end{remark}

\paragraph*{\emph{Acknowledgements.}} 
The first author acknowleges support from the SPP 2026 in form of the DFG project ME 3816/3-1. The second author is supported by the International Max Planck Research School for Mathematical and Physical Aspects of Gravitation, Cosmology and Quantum Field Theory.

\section{Preliminaries}

We work with data $(M,g,k)$ where $(M,g)$ is a smooth $n+1$-dimensional Riemannian manifold which is equipped with a symmetric $2$-tensor $k$. In General relativity, the data $(M,g,k)$ represents a space-like hypersurface (or an inital data set) with second fundamental form $k$ in a $(n+2)$-dimensional space-time.

To produce foliations we will use the fact that geodesics spheres of small radius around a point $p\in M$ form a foliation. This foliation can be perturbed in a suitable way. The perturbation procedure consist in a normal perturbation of the geodesics spheres and a perturbation of their center. The perturbation takes place in special coordinates around a point $p$ and an auxiliary manifold similar as in \cite{Ye}.

Denote by $R_p$ the injectivity radius of $(M,g)$ at $p$ and define $r_p:= \frac{1}{8} R_p$. Denote $\mathbb{B}_r:=\{x\in\mathbb{R}^{n+1}: \|x \|<r  \} $ where $\|\cdot \|$ is the Euclidean norm.  

For $\tau \in \mathbb{R}^{n+1} $ with $\| \tau\| < r_p$ we define $F_\tau : \mathbb{B}_{2r_p} \rightarrow M $ by
\begin{equation*}
  F_\tau(x)= \exp_{c(\tau)}(x^i e^\tau_i) 
\end{equation*}
where $c(\tau)= \exp_p (\tau^i e_i)$, $\{e_i\}$ is an orthonormal  Basis of $T_p M$, and $e_i^\tau$ their parallel transport to $c(\tau)$ along the geodesic $c(t \tau )_{0\leq t \leq 1
}$.  Consider also the dilation $\alpha_r(x)=rx $ for $r>0$. For each $\tau$ and $0<r<r_p$, the map $F_\tau \circ \alpha_r $ gives rise to rescaled normal coordinates centered at $c(\tau)$. The metric $g$ in this coordinates thus satisfies
\begin{equation*}
    g_{ij}= r^2 (\delta_{ij} + \sigma_{ij}(x r))
\end{equation*}
with $\sigma$ satisfying $|\sigma_{ij}(x)|\leq \|x\|^2$. We refer to this by the shorthand $g_{ij}= r^2 (\delta_{ij} + \mathcal{O}(\|xr\|^2 )) $. 

Consider the auxiliary manifold 
\begin{equation*}
  \left(\mathbb{B}_{2r_p}, g_{\tau,r}= r^{-2} \alpha_r^*(F_\tau^*(g)),k_{\tau, r}= r^{-1} \alpha_r^*(F_\tau^*(k)) \right).
\end{equation*}
Its metric is conformal to $g$ in the coordinates $F_\tau \circ \alpha_r$. For $r=0$,  $g_{\tau,0}$ is the Euclidean metric, and $k_{\tau,0}=0$. 

Let $\nu(x)$ be the outward unit normal to $\mathbb{S}^n$ and $\varphi \in \mathcal{C}^2 ( \mathbb{S}^n)$. Let $ S^n_\varphi :=\{x+ \varphi(x) \nu(x): x \in \mathbb{S}^n \}$ be the normal graph of $\varphi$ over $\IS^n$. There is $\delta_0>0$ such that if $\|\varphi \|_{\mathcal{C}^1}  <\delta_0$ then $S^n_\varphi$ is an embedded $\mathcal{C}^2$ surface in $\mathbb{R}^{n+1}$. Denote by $S_{\varphi,\tau,r}$ the surface $F_\tau(\alpha_r(S_\varphi^n))$. It is parameterized by 
\begin{equation}\label{parametrization}
    F_{\varphi,\tau,r}: \IS^n \to S_{\varphi,\tau,r} :x \mapsto F_\tau(r(x+\varphi(x)\nu(x))) = \exp_{c(\tau)}\big(rx(1+\varphi(x))\big)
\end{equation}

Following \cite{Ye}, define for $x\in\IS^n$
\begin{equation*}
    \begin{split}
        H(r,\tau, \varphi)(x) &= \textrm{ mean curvature of $S^n_\varphi $ at $x+ \varphi(x) \nu(x) $ with respect to $g_{\tau,r}$ on $\mathbb{B}_2 $}\\
        &=\textrm{ $r\cdot$[mean curvature of $S_{\varphi,\tau,r}$ at $F_{\varphi,\tau,r} (x)$ with respect to $g$]  }\\
        & =: r  H_g(r,\tau)(x)
    \end{split}
\end{equation*}
  where the second line comes from the scaling of the mean curvature. Similarly for $k^{\tau}:=(F_\tau^*(k))$ we have 
\begin{equation*}
    P(r,\tau, \varphi) 
    = \tr_{g_{\tau,r}} k_{\tau,r}- k_{\tau,r}(\nu_{\tau,r}, \nu_{\tau,r}) 
    = r (\tr_g k^\tau- k^\tau ( \nu,\nu))=:r P_g (r,\tau, \varphi),
\end{equation*}
where $\nu_{\tau,r}$ and $\nu$ are the normal to $S^n_\varphi $ with respect to $g_{\tau,r}$ and the normal to $S_{\varphi,\tau,r}$ with respect to $g$ respectively. The second equality comes from the scaling of $k_{\tau,r}$ and $ g_{\tau,r}$ and we denote   $P_g:= \frac{1}{r}P $.

The rescaling of $k_{\tau,r}$, is chosen such that, if for instance $S^n_\varphi $ is a surface with constant expansion equal to $n$ in $(\mathbb{B}_{2r_p}, g_{\tau,r})$ then it satisfies 
\begin{equation*}
    H_{\tau, r} \pm P_{\tau, r}= r(H_g \pm P_g)= n
\end{equation*}
and therefore the the surface $S_{\varphi,\tau,r}$ has constant expansion equal to $\frac{n}{r}$.

We introduce the following notation: The covariant derivatives will be denoted by $;$ and the partial derivatives $\frac{\partial}{\partial x^i}$  by a comma  or by $\partial_i$.  For any tensors $A$ and $B$  we write $A \ast B$ for any linear combination of contractions of $A$ and $B$ with the correspondent metric. 
\subsection{Sketch of the method}
The proof of our main results closely follows the strategy used in \cite{Ye}. In this section we give a brief sketch of the Lyapunov-Schmidt reduction process used there to find surfaces with constant mean curvature there. Assume that $p\in M $ is a non degenerate critical point of the scalar curvature that is, $\nabla \mathrm{Sc}_p=0$ and $\Hess\mathrm{Sc}_p$ is invertible. 

We cite the following lemma for future reference
\begin{lemma}[{\cite[Lemma 1.3]{Ye}}]\label{1.3}
Let
\begin{equation*}
  H_\varphi = \frac{\partial}{\partial \varphi} H(r,\tau,\varphi)_{|r=0, \varphi=0}, \quad  H_{\varphi r} = \frac{\partial}{\partial r} H_\varphi, 
    \quad\text{and}\quad
    H_{\varphi r r} = \frac{\partial^2}{\partial r^2} H_\varphi. 
\end{equation*}
Then for all $\tau \in \mathbb{R}^{n+1}$ we have
\begin{enum}
\item $H_{\varphi r}(0, \tau,0)=0$ 
\item $H_{\varphi rr}(0, \tau,0) $ is an even operator in the sense that if $\varphi$ is an even function $i.e.$ $\varphi(-x)=\varphi(x) $, then  $H_{\varphi rr}(0, \tau,0) \varphi$ is also an even function. 
\end{enum}
\end{lemma}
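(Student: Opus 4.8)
The lemma is a bookkeeping statement about the $r$-Taylor expansion of the linearized mean curvature operator, resting entirely on the normal-coordinate expansion of $g_{\tau,r}$. The plan is as follows. First I would record that, in the coordinates $F_\tau\circ\alpha_r$, one has $(g_{\tau,r})_{ij}(x)=\delta_{ij}+\sigma_{ij}(rx)$ with $\sigma_{ij}(y)=-\tfrac13 R_{ikjl}(c(\tau))\,y^ky^l+\mathcal O(\|y\|^3)$ the standard Riemannian normal coordinate expansion; hence, as a function of $r$,
\begin{equation*}
  (g_{\tau,r})_{ij}(x)=\delta_{ij}+q_{ij}(x)\,r^2+\mathcal O(r^3),\qquad q_{ij}(x):=-\tfrac13 R_{ikjl}(c(\tau))\,x^kx^l,
\end{equation*}
so the linear-in-$r$ term of $r\mapsto g_{\tau,r}$ vanishes and its quadratic coefficient $q_{ij}$ is an even (degree-two) polynomial in $x$. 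Since the mean curvature of the fixed normal graph $S^n_\varphi$ over $\IS^n$ is a smooth nonlinear differential operator in the ambient metric, its first derivatives, and $\varphi$ (with metric coefficients evaluated at the point $rx$), the map $(r,\tau,\varphi)\mapsto H(r,\tau,\varphi)$ is jointly smooth near $(0,\tau,0)$; in particular mixed partials commute, and the $m$-jet at $r=0$ of $r\mapsto H(r,\tau,\varphi)$ depends only on the $m$-jet at $r=0$ of $r\mapsto g_{\tau,r}$. Throughout I read $H_\varphi=\partial_\varphi H$ as the linearization of the mean curvature at $\varphi=0$, as in \cite{Ye}.

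For (i) I would argue: by the chain rule $\partial_r H(r,\tau,\varphi)\big|_{r=0}$ is the linearization of the mean-curvature operator at the Euclidean metric, on the fixed surface $S^n_\varphi$, applied to $\partial_r g_{\tau,r}\big|_{r=0}=0$; hence it vanishes for every $\varphi$ and every $\tau$. Differentiating this identity in $\varphi$ at $\varphi=0$ and commuting partials gives $H_{\varphi r}(0,\tau,0)=\partial_\varphi\big(\partial_r H\big)\big|_{(0,\tau,0)}=0$.

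For (ii): by the jet remark, $\partial_r^2 H(r,\tau,\varphi)\big|_{r=0}=\tfrac{d^2}{dr^2}\big|_{r=0}H_{\tilde g_r}\!\big(S^n_\varphi\big)$, where $\tilde g_r:=\delta+r^2 q$ and $H_{\tilde g_r}(\Sigma)$ denotes the mean curvature function of $\Sigma$ in $(\mathbb R^{n+1},\tilde g_r)$ (the two metrics agree to second order in $r$). Because $q$ is even, the antipodal map $\iota(x)=-x$ satisfies $\iota^*\tilde g_r=\tilde g_r$, so it is an isometry of $(\mathbb R^{n+1},\tilde g_r)$ for every $r$; and if $\varphi$ is even then $\iota$ maps $S^n_\varphi=\{(1+\varphi(x))x\}$ onto itself, preserving the bounded region it encloses and hence the outward unit normal. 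Naturality of the second fundamental form under isometries then gives $H_{\tilde g_r}(S^n_\varphi)\circ\iota=H_{\tilde g_r}(S^n_\varphi)$, i.e.\ $H_{\tilde g_r}(S^n_\varphi)$ is an even function on $\IS^n$ for every $r$; differentiating twice in $r$ at $r=0$ shows $H_{rr}(0,\tau,\varphi)$ is even whenever $\varphi$ is. Finally, for an even test function $\psi$ one has $H_{\varphi rr}(0,\tau,0)\psi=\tfrac{d}{ds}\big|_{s=0}H_{rr}(0,\tau,s\psi)$, a derivative along a path of even functions, hence even.

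All steps are soft; the only points needing care are the precise smooth dependence of $r\mapsto H(r,\tau,\varphi)$ on $r$ down to and including $r=0$ (so that the $r$-Taylor coefficients are well defined and see only the $r$-jet of the metric) — immediate once the mean-curvature operator for normal graphs is written out explicitly — and the parity bookkeeping in (ii), in particular verifying that $\iota$ genuinely preserves $S^n_\varphi$ together with its co-orientation so that the signed mean curvature is unchanged. Neither is a real obstacle: the content of the lemma is precisely that Riemannian normal coordinates annihilate the $\mathcal O(r)$ term of $g_{\tau,r}$ while the $\mathcal O(r^2)$ term is invariant under $x\mapsto -x$.
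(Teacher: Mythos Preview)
Your argument is correct. The paper does not supply its own proof of this lemma; it is quoted from \cite[Lemma~1.3]{Ye} and simply cited for later use. The only indication the paper gives about Ye's original argument appears later, in the proof of Theorem~\ref{priSTCMC}, where it remarks that the proof of \cite[Lemma~1.3]{Ye} in fact yields the stronger statement $\partial_r B(0,\tau,0)=0$ for the full second fundamental form $B$ of $S^n_\varphi$ in $g_{\tau,r}$. Your route --- reading off from the normal-coordinate expansion that $\partial_r g_{\tau,r}\big|_{r=0}=0$ (which gives (i) for \emph{every} $\varphi$, not only at $\varphi=0$, and hence the stronger $H_r(0,\tau,\varphi)\equiv 0$), and then handling (ii) by observing that the antipodal map is an isometry of the $2$-jet metric $\tilde g_r=\delta+r^2q$ preserving both $S^n_\varphi$ and its co-orientation when $\varphi$ is even --- is a clean, self-contained version of the same underlying mechanism. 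The jet-replacement step and the parity bookkeeping are handled carefully, and nothing is missing.
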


Calculating the mean curvature of a geodesic sphere yields the following expansion in $r$:
\begin{equation}
\label{H}
    H(r,\tau, 0)(x)= n-\frac{1}{3}  \mathrm{Ric}^{\tau}_{ij}(0) x^i x^j r^2- \frac{1}{4} \mathrm{Ric}^{\tau}_{ij;k}(0)x^ix^jx^k r^3 + \mathcal{O}(r^4).
\end{equation}
 In particular $H(0,\tau, 0)=n$. The idea is thus to use the implicit function theorem to find $r_0>0$, $\tau (r)$ and $\varphi(r)$ such that $H(r,\tau(r), \varphi(r))-n=0$ for all $r\in (0,r_0)$. To this end write $\mathcal{C}^{2, \frac{1}{2}}( \mathbb{S}^n) = K \oplus K^\bot$ where $K$ is the kernel of $-\Delta_{ \mathbb{S}^n} -n =\frac{\partial}{\partial \varphi} H(r,\tau,\varphi)|_{r=0, \varphi=0} :=L$ on Euclidean space. $K$ is the span of the first spherical harmonics. The equation $H(r,\tau(r), \varphi(r))-n=0$ is solved in two steps, first the projection to $K$ is made to vanish, then the projection to $K^\bot$. For both steps, the implicit function theorem is used in the following ways.

Let $\pi$ denote the orthogonal projection from $ \mathcal{C}^{0, \frac{1}{2}}( \mathbb{S}^n)$ onto $K$ and $T:K \rightarrow  \mathbb{R}^{n+1} $ the isomorphism sending $x^i_{| \mathbb{S}^n} $  (a spherical harmonic on the sphere) to the $i$th coordinate basis $e_i$. Let $ \Tilde{\pi}:= T \circ \pi$. After composition with a suitable scaling the equation projected to the kernel we find
\begin{equation*}
\left.\Tilde{\pi}\left(\frac{1}{r^3}( H(r,\tau, r^2 \varphi)-n)\right)\right|_{\substack{{r=0}\\{\tau=0}}}= -\frac{|\mathbb{S}^n|}{(n+1)(n+3)} \partial_i \mathrm{Sc}  \;e_i
\end{equation*}
which is equal to zero as $p$ is a critical point of the scalar curvature. Furthermore,
\begin{equation*}
\frac{\partial}{\partial \tau^k } \Tilde{\pi}\left.\left(\frac{1}{r^3} (H(r,\tau, r^2 \varphi)-n)\right)\right|_{\substack{{r=0}\\{\tau=0}}}= -\frac{|\mathbb{S}^n|}{(n+1)(n+3)} \partial_{\tau^k} \partial_i \mathrm{Sc} \; e_i.
\end{equation*}
Since $\Hess\mathrm{Sc}$ is invertible, by the implicit function theorem there is a function $\tau=\tau(r,\varphi)$ with $\tau(0,\varphi_0)=0$ and $\Tilde{\pi}(H(r, \tau, r^2\varphi))=0 $ around $r=0$, $\varphi=\varphi_0$. $\varphi_0$ does not play a role in the previous calculations and therefore can be fixed to be the function $\varphi_0 \in K^\bot$ determined by the equation $L\varphi_0 (x) = \frac{1}{3} \mathrm{R}^{\tau}_{ij}(0) x^i x^j  $ for $x\in\mathbb{S}^n $.

Consider the projection to $K^\bot$, again using a suitable rescaling to find
\begin{equation*}
\pi^\bot \left.\left(\frac{1}{r^2} (H(r,\tau, r^2 \varphi)-n)\right)\right|_{\substack{{r=0}\\{\tau=0}}}= L\varphi_0- \frac{1}{3} \mathrm{R}^{\tau}_{ij}(0) x^i x^j  =0
\end{equation*}
and 
\begin{equation*}
\frac{\partial}{\partial \varphi }\pi^\bot \left.\left(\frac{1}{r^2} (H(r,\tau, r^2 \varphi)-n)\right)\right|_{\substack{{r=0}\\{\tau=0}}}= L_{| K^\bot}  
\end{equation*}
By the implicit function theorem there exist $r_0>0$, $\varphi:[0,r_0) \to C^{2,\frac12}$ with $\varphi(0)=\varphi_0$ such that $H(r,\tau(r), \varphi(r))=n$ for all $r\in(0,r_0)$. This gives us a collection of CMC surfaces $\{S_r:=S_{\varphi(r),\tau(r),r}\mid r\in(0,r_0)\}$. A priori they are not necessarily a foliation. The $S_r$ are parameterized by (\ref{parametrization}) and therefore to find  the lapse function we need to calculate
\begin{equation*}
    \begin{aligned}
        \left.\frac{\partial F_{\varphi,\tau,r}}{\partial r}\right|_{r=0} &= (d_x \exp_{c(\tau(r))} ) \Big(x(1 +r^2 \varphi(r) + r(r^2 \varphi(r))_r )\Big)|_{r=0} \\
        & \qquad + \left( \frac{\partial  \exp_{c(\tau(r))}}{\partial r}  \right) ( rx(1 +r^2 \varphi(r)))|_{r=0}. 
    \end{aligned}
\end{equation*}
where we write  for simplicity $\varphi(r)=\varphi(r)(x)  $. This reduces to $ \frac{\partial F_{\varphi,\tau,r}}{\partial r}|_{r=0}= x + \frac{\partial \tau^k}{\partial r}|_{r=0} e_k  $. Hence the lapse function is given by 
\begin{equation}
\label{lapse}
    \alpha:=\left\langle \frac{\partial F_{\varphi,\tau,r}}{\partial r}_{|r=0},\nu \right\rangle = 1 +  \frac{\partial \tau^k}{\partial r}\langle   e_k, \nu \rangle. 
\end{equation}
Therefore, the $S_r$ form a foliation if $\alpha>0$. For the CMC surfaces this is the case, since \cite{Ye} showed that $\frac{\partial \tau^k}{\partial r}=0 $.
    
All of the basic structure of the proof shown before applies when trying to find a STCMC or constant expansion foliation. 
    
\section{Construction of the STCMC Foliation}
Since $k_{\tau,r} \rightarrow 0$ when $r \rightarrow 0$ the linearization of the STCMC operator satisfies that 
\begin{equation*}
    (H^2 -P^2)_{\varphi}(0,\tau, 0)= 2n(-\Delta_{ \mathbb{S}^n} -n)=2nL.
\end{equation*}
Therefore the kernel of this operator is the same as in the CMC case and thus it is generated by the first spherical harmonics 
$\{ x^i \mid 1\leq i \leq n+1\}$ where $x^i$ denotes the $i$-th coordinate function on $\mathbb{S}^n \subset\R^{n+1}$.
From \eqref{H} and the scaling of $P^2$ we get, recalling that $P=rP_g$:
\begin{equation}
\label{general0}
    \begin{aligned}
        (H^2-P^2)(r,\tau, 0)(x) 
        &= 
        n^2-\frac{2n}{3}  \mathrm{Ric}^{\tau}_{ij}(0) x^i x^j r^2- \frac{2n}{4} \mathrm{Ric}^{\tau}_{ij;k}(0)x^ix^jx^k r^3
        \\ &\qquad -r^2 P_g^2(r, \tau,0)  + \mathcal{O}(r^4).    
    \end{aligned}
\end{equation}
A slight adaption of \cite[Lemma 1.2]{Ye} gives
\begin{lemma}\label{1.2}
    The projection to the kernel of the space-time mean curvature (STCMC) is
    \begin{equation*}
        \begin{split}
            \Tilde{\pi}\left( (H^2 -P^2)(r,\tau,  \varphi)\right)_{|\substack{r=0,\tau=0}}&= -\frac{n|\mathbb{S}^n|}{(n+1)(n+3) } r^3 \mathrm{Sc}_{,l} e_l -  \Tilde{\pi}\left(P^2 (r,\tau, 0)\right)\\ &+\Tilde{\pi}\left(\int_0^1 (H^2 -P^2)_{\varphi}(r,\tau, t \varphi) dt \right) + \mathcal{O}(r^5)
        \end{split}
    \end{equation*}
\end{lemma}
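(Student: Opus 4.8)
The plan is to isolate the $\varphi$-dependence by the fundamental theorem of calculus and then to read off the $K$-projection of the $\varphi=0$ part from the geodesic-sphere expansion \eqref{H}. First I would apply the fundamental theorem of calculus to the path $t\mapsto (H^2-P^2)(r,\tau,t\varphi)$, writing
\[
  (H^2-P^2)(r,\tau,\varphi) = (H^2-P^2)(r,\tau,0) + \int_0^1 (H^2-P^2)_\varphi(r,\tau,t\varphi)\,dt ,
\]
apply the linear operator $\Tilde{\pi}$, and split the first summand as $H^2(r,\tau,0)-P^2(r,\tau,0)$. The terms $-\Tilde{\pi}\big(P^2(r,\tau,0)\big)$ and $\Tilde{\pi}\big(\int_0^1(H^2-P^2)_\varphi(r,\tau,t\varphi)\,dt\big)$ are precisely the two terms carried along in the statement (they get treated later via the implicit function theorem), so the entire content of the lemma is the identity
\[
  \Tilde{\pi}\big(H^2(r,\tau,0)\big)\big|_{\tau=0}
  = -\frac{n|\IS^n|}{(n+1)(n+3)}\,r^3\,\mathrm{Sc}_{,l}\,e_l + \mathcal{O}(r^5).
\]

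To establish this I would square \eqref{H} to obtain the $H^2$-part of \eqref{general0},
\[
  H^2(r,\tau,0)(x) = n^2 - \tfrac{2n}{3}\mathrm{Ric}^{\tau}_{ij}(0)x^ix^jr^2 - \tfrac{n}{2}\mathrm{Ric}^{\tau}_{ij;k}(0)x^ix^jx^kr^3 + \mathcal{O}(r^4),
\]
and then invoke the structural fact that $\Tilde{\pi}$ --- pairing in $L^2(\IS^n)$ against the linear functions $x^1,\dots,x^{n+1}$, followed by $x^i\mapsto e_i$ --- annihilates the restriction to $\IS^n$ of every polynomial all of whose monomials have even total degree, since such a polynomial is invariant under $x\mapsto -x$ while each $x^l$ is odd. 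Because the coefficient of $r^m$ in $g_{\tau,r}$ is homogeneous of degree $m$ in $x$ (from the normal-coordinate expansion of the metric together with the rescaling), naturality of the mean curvature under the reflection $x\mapsto -x$ forces the coefficient $c_m$ of $r^m$ in $H(r,\tau,0)$, and hence the coefficient of $r^m$ in $H^2(r,\tau,0)$, to have parity $(-1)^m$ on $\IS^n$. Thus $\Tilde{\pi}$ kills the constant term, the $r^2$ term, and the entire $\mathcal{O}(r^4)$ remainder, leaving only the $r^3$ term modulo $\mathcal{O}(r^5)$.

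It then remains to compute $\Tilde{\pi}\big(\mathrm{Ric}^{\tau}_{ij;k}(0)x^ix^jx^k\big)$. Using the moment identity $\int_{\IS^n}x^ix^jx^kx^l\,d\sigma = \frac{|\IS^n|}{(n+1)(n+3)}\big(\delta_{ij}\delta_{kl}+\delta_{ik}\delta_{jl}+\delta_{il}\delta_{jk}\big)$ together with the symmetry of $\mathrm{Ric}$, the pairing against $x^l$ equals $\frac{|\IS^n|}{(n+1)(n+3)}\big(\mathrm{Sc}_{,l}+2(\diver\mathrm{Ric})_l\big)$, and the contracted second Bianchi identity $(\diver\mathrm{Ric})_l=\tfrac12\mathrm{Sc}_{,l}$ reduces this to $\frac{2|\IS^n|}{(n+1)(n+3)}\mathrm{Sc}_{,l}$; hence $\Tilde{\pi}\big(\mathrm{Ric}^{\tau}_{ij;k}(0)x^ix^jx^k\big)=\frac{2|\IS^n|}{(n+1)(n+3)}\mathrm{Sc}_{,l}e_l$. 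Multiplying by the coefficient $-\tfrac{n}{2}r^3$ of the surviving $r^3$ term gives $-\frac{n|\IS^n|}{(n+1)(n+3)}r^3\mathrm{Sc}_{,l}e_l$, and reassembling with the two carried-along terms yields the claimed formula.

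The step I expect to be the main obstacle is the parity/homogeneity bookkeeping needed to upgrade the error from the naive $\mathcal{O}(r^4)$ produced by squaring \eqref{H} to the asserted $\mathcal{O}(r^5)$: one must check carefully that both $2n$ times the $r^4$-coefficient of $H(r,\tau,0)$ and the square of its $r^2$-coefficient restrict to even polynomials on $\IS^n$ --- equivalently, that the reflection-naturality argument genuinely forces $c_m$ to have parity $(-1)^m$ --- so that their $\Tilde{\pi}$-images vanish and the first surviving odd contribution is really of order $r^5$. A minor additional point is keeping track of the base point: $\mathrm{Sc}_{,l}$ must be read as $\nabla\mathrm{Sc}$ at $c(\tau)$ (the origin of the rescaled coordinates), which is where the contracted Bianchi identity is applied, so that at $\tau=0$ it becomes $\nabla\mathrm{Sc}$ at $p$ and matches its use in the subsequent Lyapunov--Schmidt reduction.
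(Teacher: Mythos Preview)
Your proof is correct and is precisely the adaptation of \cite[Lemma~1.2]{Ye} that the paper invokes without writing out; the paper offers no independent argument beyond that citation. Your key observation---that the reflection $(r,x)\mapsto(-r,-x)$ preserves $g_{\tau,r}$ (since $(g_{\tau,r})_{ij}(x)=g^\tau_{ij}(rx)$), the sphere, and its outward normal, hence forces the $r^m$-coefficient of $H(r,\tau,0)$ and therefore of $H^2(r,\tau,0)$ to have parity $(-1)^m$---is exactly the mechanism behind Ye's lemma, and your care in checking that the parity survives squaring (via $c_ac_b$ having parity $(-1)^{a+b}$) is the only genuinely new ingredient over the CMC case. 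The Bianchi computation is correct.
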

\begin{proof}
Using that 
$$ (H^2-P^2)(r,\tau, 0)= n^2-\frac{2n}{3}  \mathrm{Ric}^{\tau}_{ij}(0) x^i x^j r^2- \frac{2n}{4} \mathrm{Ric}^{\tau}_{ij;k}(0)x^ix^jx^k r^3  -P^2(r,\tau, 0)+ \mathcal{O}(r^4),$$
and projecting $(H^2-P^2)(r,\tau, \varphi)$ to the kernel like it was done in \cite[Lemma 1.2]{Ye},  the result is obtained.
\end{proof}

\begin{theorem}\label{priSTCMC}
    Let  $p\in M$ be such that at $p$, $\mathrm{A_{ST}}=0$ and $\nabla \mathrm{A_{ST}} $ is invertible. Then there exist a positive constant $C$ depending on the dimension of $M$ with the following property:
    If at $p$ it holds that 
    \begin{equation*}
        C |(\nabla \mathrm{A_{ST}})^{-1}|\, (|k|^2 + |\mathrm{Ric}|  ) |k| |\nabla k|<1
    \end{equation*}
    then there exist $r_0>0$ and a smooth foliation $\mathcal{F}= \{ S_r : r\in (0, r_0) \} $ of STCMC spheres with $\sqrt{H_{S_r}^2-P_{S_r}^2}= \frac{n}{r}$. $\mathcal{F}$ is a foliation regularly centered at $p$ and its leaves $S_r$ are
    normal graphs over geodesic spheres of radius $r$.
\end{theorem}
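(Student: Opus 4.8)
The plan is to run the Lyapunov--Schmidt reduction of Section~2 with the operator $\mathcal{H}(r,\tau,\varphi):=(H^2-P^2)(r,\tau,\varphi)$ on $\mathbb{B}_{2r_p}$ in place of the mean curvature. Since $k_{\tau,r}\to 0$ as $r\to 0$ we have $\mathcal{H}_\varphi(0,\tau,0)=nL$ with $L:=-\Delta_{\mathbb{S}^n}-n$, so the linearized problem has exactly the kernel and functional-analytic framework of the CMC case: split $\mathcal{C}^{2,\frac12}(\mathbb{S}^n)=K\oplus K^\perp$ with $K=\operatorname{span}\{x^1,\dots,x^{n+1}\}$, so that $L|_{K^\perp}$ is an isomorphism onto $\mathcal{C}^{0,\frac12}(\mathbb{S}^n)\cap K^\perp$; the normal graph profile is sought in $K^\perp$ and the translation $\tau\in\mathbb{R}^{n+1}$ absorbs the $K$-direction. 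We look for $r_0>0$ and $\mathcal{C}^1$ maps $r\mapsto\tau(r)\in\mathbb{R}^{n+1}$, $r\mapsto\varphi(r)\in K^\perp$ with $\tau(0)=0$ solving $\mathcal{H}(r,\tau(r),\varphi(r))=n^2$ on $(0,r_0)$, and we split this into its $K$-projection (rescaled by $1/r^3$) and its $K^\perp$-projection (rescaled by $1/r^2$), solving the first for $\tau=\tau(r,\varphi)$ and the second for $\varphi=\varphi(r)$. As in \cite{Ye}, $\tilde{\pi}$ annihilates the a priori divergent term $\tfrac{1}{r}\tilde{\pi}(L\varphi)$ because $L\varphi\in K^\perp$, and the $r^3$-order of $\mathcal{H}(r,\tau,r^2\varphi)$ turns out to be independent of $\varphi$; hence the kernel equation is well posed and $\varphi_0\in K^\perp$ may be taken to be the solution of the linearized $r=0$ equation at $\tau=0$, which now carries --- besides the $\mathrm{Ric}^\tau$-term of \cite{Ye} --- a quadratic-in-$k$ piece coming from the $r^2$-order term $r^2P_g^2(0,\tau,0)$ of $P^2$.

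The core of the proof is to identify the reduced kernel map at $r=0$. Writing $G(r,\tau,\varphi):=\tilde{\pi}\!\left(\tfrac{1}{r^3}(\mathcal{H}(r,\tau,r^2\varphi)-n^2)\right)$ and expanding $P_g(r,\tau,0)=P_{g,0}+rP_{g,1}+\mathcal{O}(r^2)$ with
\begin{equation*}
P_{g,0}=\tr k(c(\tau))-k_{ab}(c(\tau))\,x^ax^b,\qquad P_{g,1}=x^l\partial_l\tr k(c(\tau))-x^l\partial_l k_{ab}(c(\tau))\,x^ax^b,
\end{equation*}
one combines this with \eqref{general0}, Lemma~\ref{1.2}, and the observations that the $r^2$-order of $\mathcal{H}(r,\tau,r^2\varphi)-n^2$ is even in $x$ (hence killed by $\tilde{\pi}$) and its $r^3$-order is $\varphi$-independent, to get
\begin{equation*}
G(0,\tau,\varphi)=\tilde{\pi}\!\left(-\tfrac{n}{2}\,\mathrm{Ric}^\tau_{ij;k}\,x^ix^jx^k-2\,P_{g,0}P_{g,1}\right)=c_n\,\mathrm{A_{ST}}(c(\tau))
\end{equation*}
for a nonzero dimensional constant $c_n$, the middle expression being $\varphi$-independent. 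The $\tfrac{n}{2}\nabla_V\mathrm{Sc}$-part of $\mathrm{A_{ST}}$ is produced by the first term, after contracting $\mathrm{Ric}^\tau_{ij;k}x^ix^jx^k$ against $x^l$ and invoking the contracted Bianchi identity exactly as in \cite{Ye}; the bracketed $k$-terms come from $-2\tilde{\pi}(P_{g,0}P_{g,1})$, by multiplying out $(\tr k-k_{ab}x^ax^b)(x^l\partial_l\tr k-x^l\partial_l k_{cd}x^cx^d)$ and integrating the resulting monomials of degree $\le 6$ over $\mathbb{S}^n$ through the Wick-type formulas for $\int_{\mathbb{S}^n}x^{i_1}\!\cdots x^{i_{2m}}$ with $2m\in\{2,4,6\}$; the $k\ast\nabla k$-contractions so obtained reorganize precisely into $\tfrac{1}{n+5}\big(\nabla_V|k|^2+((n+1)(n+5)+1)\nabla_V\tfrac{(\tr k)^2}{2}+4\diver\langle k,k(V,\cdot)\rangle-2(n+4)\diver(\tr k\cdot k(V,\cdot))\big)$, the integers $(n+1)(n+5)+1$ and $2(n+4)$ arising from combining the $2m=2,4,6$ normalizations over a common denominator. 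This bookkeeping --- matching every dimensional constant, and in addition estimating the remainder that the expansions produce at the next orders in $r$, which is bounded by a dimensional constant times $(|k|^2+|\mathrm{Ric}|)\,|k|\,|\nabla k|$ --- will be the main obstacle of the proof.

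Granting this, the hypothesis $\mathrm{A_{ST}}(p)=0$ gives $G(0,0,\varphi_0)=0$, while $\partial_\tau G(0,0,\varphi_0)=c_n\nabla\mathrm{A_{ST}}(p)$ is invertible; the smallness condition $C\,|(\nabla\mathrm{A_{ST}})^{-1}|\,(|k|^2+|\mathrm{Ric}|)\,|k|\,|\nabla k|<1$ at $p$ is what makes it possible to absorb the remainder and run a quantitative implicit function theorem, yielding $\tau=\tau(r,\varphi)$ with $\tau(0,\varphi_0)=0$ and $G(r,\tau(r,\varphi),\varphi)=0$ for $(r,\varphi)$ near $(0,\varphi_0)$, on some interval $(0,r_0)$. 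With $\tau$ thus eliminated, the $K^\perp$-equation is treated as in \cite{Ye}: after rescaling by $1/r^2$ its value at $r=0$ vanishes by the choice of $\varphi_0$, and its $\varphi$-derivative there is $nL|_{K^\perp}$, which is invertible, so the implicit function theorem produces $\varphi\colon[0,r_0)\to K^\perp$ with $\varphi(0)=\varphi_0$ and $\mathcal{H}(r,\tau(r),\varphi(r))=n^2$ for $r\in(0,r_0)$, where $\tau(r):=\tau(r,\varphi(r))$. Elliptic regularity for the smooth quasilinear equation $\mathcal{H}=n^2$ then makes $S_r:=S_{\varphi(r),\tau(r),r}$ a smooth family of $\mathcal{C}^\infty$ normal graphs over the geodesic spheres of radius $r$ with $\sqrt{H_{S_r}^2-P_{S_r}^2}=n/r$.

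Finally one checks that $\mathcal{F}=\{S_r\}$ is a foliation regularly centered at $p$. By \eqref{lapse} the lapse of the family is $\alpha=1+(\partial_r\tau^k)_{|r=0}\,\langle e_k,\nu\rangle$. Since $P(0,\tau,0)=P_\varphi(0,\tau,0)=0$ and the parity grading of $H^2-P^2$ in $r$ agrees with that of $H$, the analogue of Lemma~\ref{1.3} holds for $\mathcal{H}$; it reduces $(\partial_r\tau^k)_{|r=0}$ to a $k$-dependent quantity which the smallness condition keeps below $1$ in norm, so that $\alpha>0$. Hence the leaves $S_r$ are pairwise disjoint and foliate a punctured neighborhood of $p$. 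Finally, each $S_r$ is a normal graph over a geodesic sphere of radius $r$ whose graph function has $\mathcal{C}^1$-norm bounded independently of $r$, so $\sup_{S_r}|B_{S_r}|=\mathcal{O}(1/r)$ and $\diam S_r=\mathcal{O}(r)$; thus \eqref{regular} holds. Relabelling $r_0=:\delta$ finishes the argument.
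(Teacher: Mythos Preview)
Your Lyapunov--Schmidt reduction and the identification $G(0,\tau,\varphi)=c_n\,\mathrm{A_{ST}}(c(\tau))$ via the spherical integrals are correct and match the paper. The gap is in where you place the smallness hypothesis. It is \emph{not} needed to run the implicit function theorem: once $G(0,0,\varphi_0)=0$ and $\partial_\tau G(0,0,\varphi_0)=c_n\nabla\mathrm{A_{ST}}(p)$ is invertible, the ordinary (not quantitative) implicit function theorem already produces $\tau(r,\varphi)$, and then $\varphi(r)$ on $K^\perp$, with no extra assumption. Your claim that $(|k|^2+|\mathrm{Ric}|)|k||\nabla k|$ bounds ``the remainder that the expansions produce at the next orders in $r$'' is a misreading; that quantity plays no role in the existence of the family $\{S_r\}$.

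The smallness condition enters only in the lapse computation, and there your account is too sketchy and partly incorrect. One must actually compute $\partial_r\tau|_{r=0}$ by differentiating $G(r,\tau(r),\varphi(r))=0$ at $r=0$. After checking that $\tilde\pi\big((H^2-P^2)_{\varphi\varphi}(0,0,0)\varphi_0\varphi_0\big)=0$ by parity and that the $r$-derivative of the $P_g^2$-contribution recombines with $\Hess\mathrm{Sc}$ to give $\nabla\mathrm{A_{ST}}\cdot\partial_r\tau|_{r=0}$, one is left with $\nabla\mathrm{A_{ST}}\cdot\partial_r\tau|_{r=0}=\mathrm{const}\cdot\tilde\pi\big((H^2-P^2)_{\varphi rr}(0,0,0)\varphi_0\big)$. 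Here Lemma~\ref{1.3} does kill $\tilde\pi\big((H^2)_{\varphi rr}\varphi_0\big)$, but \emph{not} $\tilde\pi\big((P^2)_{\varphi rr}\varphi_0\big)$; your assertion that ``the parity grading of $H^2-P^2$ in $r$ agrees with that of $H$'' fails precisely at this order. Using $(P^2)_\varphi\psi=2P\big((\nabla_\nu\tr k-\nabla_\nu k(\nu,\nu))\psi+2k(\nabla\psi,\nu)\big)$ together with the explicit form of $\varphi_0$ (determined by $nL\varphi_0=\tfrac{2n}{3}\mathrm{Ric}_{ij}x^ix^j+P_g^2(0,0,0)$, so schematically $\varphi_0=k\ast k+\mathrm{Ric}$), this surviving term has the schematic shape $(k\ast\nabla k)(k\ast k+\mathrm{Ric})$. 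That is exactly the origin of the bound $|\partial_r\tau|_{r=0}|\le C\,|(\nabla\mathrm{A_{ST}})^{-1}|\,(|k|^2+|\mathrm{Ric}|)\,|k|\,|\nabla k|$, and only then does the hypothesis force $\alpha>0$.
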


\begin{proof}\  
\begin{steps}
\item[Construct a family of STCMC surfaces.] Start by expanding the operator
    \begin{equation}
\begin{split}
    (H^2 -P^2)(r, \tau, r^2 \varphi )=&  \int_0^1 \frac{\partial}{\partial t} ((H^2 -P^2)(r, \tau, t r^2 \varphi )) dt +(H^2 -P^2)(r, \tau, 0)  \\
    =& \int_0^1 \int_0^1 \frac{\partial}{\partial s} ((H^2 -P^2)_\varphi(sr, \tau, st r^2 \varphi )) ds r^2 \varphi dt + (H^2 -P^2)(r, \tau, 0)\\
    &+ (H^2 -P^2)_\varphi(0, \tau, 0)\varphi^2r^2
    \end{split}
\end{equation}
Then, by continuing the same procedure, one finds the following expression:
    \begin{equation}
    \label{genral}
        \begin{aligned}
            (H^2 -P^2)(r, \tau, r^2 \varphi ) &= (H^2 -P^2)(r, \tau, 0 ) +(H^2 -P^2)_\varphi (0, \tau, 0) \varphi r^2 \\
            &\quad+ (H^2 -P^2)_{r\varphi} (0,\tau,0 )\varphi r^3\\
            &\quad+ r^4 \int_0^1 \int_0^1 t (H^2 -P^2)_{\varphi \varphi}(sr, \tau,st r^2 \varphi )  \varphi \varphi \ds \dt\\
            &\quad+ r^4 \int_0^1 \int_0^1 \int_0^1 s (H^2 -P^2)_{\varphi r r}(usr, \tau,ust r^2 \varphi )   \varphi \du \ds \dt\\
            &\quad+ r^5 \int_0^1 \int_0^1 \int_0^1 s t (H^2 -P^2)_{\varphi \varphi r}(usr, \tau,ust r^2 \varphi )  \varphi \varphi \du \ds \dt
        \end{aligned}
    \end{equation}
    Where 
    \begin{equation*}
        (H^2 -P^2)_{\varphi \varphi}(r, \tau,  \phi )  \varphi \varphi' = \frac{d}{dt} (H^2 -P^2)_{\varphi }(r, \tau,  \phi +t \varphi')  \varphi \vert_{t=0}.
    \end{equation*}
    Note  also that  $$(H^2 -P^2)_\varphi (0, \tau, 0) \varphi= 2n (-\Delta_{ \mathbb{S}^n} -n)\varphi=2n L\varphi $$ and by Lemma \ref{1.3} and the scaling of $P(r,\tau, r^2 \varphi)$ we have 
    \begin{equation*}
        (H^2 -P^2)_{r\varphi} (0,\tau,0 ) =2 H_r(0,\tau, 0) H_\varphi(0,\tau, 0)
    \end{equation*}
    The proof of \cite[Lemma 1.3]{Ye} yields that $\frac{\partial}{\partial r} B(0, \tau, 0) =0 $ where $B$ is the second fundamental form of $S^n_\varphi $ with respect to $g_{\tau,r}$ on $\mathbb{B}_2 $. From this, it follows that $H_r(0,\tau,0)=0 $ so that altoghether $(H^2 -P^2)_{r\varphi} (0,\tau,0 ) =0$.
    
    To compute the projection to the kernel $K$, use Lemma \ref{1.2}, that $L$ is an even operator, and that $\pi(n^2)=0$. Hence,
    \begin{equation}\label{kern}
        \begin{aligned}
            \quad&\hspace{-4ex}\Tilde{\pi}\left(\frac{(H^2 -P^2)(r, \tau, r^2 \varphi )-n^2}{r^3}\right)\\
            &= -\frac{n|\mathbb{S}^n|}{(n+1)(n+3) } \mathrm{Sc}_{,l} e_l - \frac{1}{r^3} \Tilde{\pi}\left(P^2 (r,\tau, 0)\right) +\frac{2n}{r} \Tilde{\pi}(L\varphi)  \\
            &\quad+\Tilde{\pi}\Bigg(  r \int_0^1 \int_0^1 t (H^2 -P^2)_{\varphi \varphi}(sr, \tau,st r^2 \varphi )  \varphi \varphi \ds \dt\\
            &\quad+ r \int_0^1 \int_0^1 \int_0^1 s (H^2 -P^2)_{\varphi r r}(usr, \tau,ust r^2 \varphi )   \varphi \du \ds \dt\\
            &\quad+ r^2 \int_0^1 \int_0^1 \int_0^1 s t (H^2 -P^2)_{\varphi \varphi r} (usr, \tau,ust r^2 \varphi )  \varphi \varphi \du \ds \dt \Bigg)\\
            &\quad+ \mathcal{O}(r^2).
        \end{aligned}
    \end{equation}
    Recall that by our rescaling $\Tilde{\pi}\left(P^2(r,\tau,0 )\right)= r^2\Tilde{\pi}\left(P_g^2(r,\tau,0 )\right)$. Hence, for small $r>0$,
    \begin{equation*}
        \begin{aligned}        \Tilde{\pi}\left(P_g^2(r,\tau,0 )\right) 
            &= \Tilde{\pi}\left(P_g^2(0,\tau,0 )\right) + \frac{\partial}{\partial r }\Tilde{\pi}\left(P_g^2(r,\tau,0 )\right)_{| r=0} r \\
            &\quad 
            + \frac{1}{2} \frac{\partial^2 }{\partial r^2} \Tilde{\pi}\left(P_g^2(r,\tau,0 )\right)_{|\substack{r=0}} r^2 + \mathcal{O}(r^3).
        \end{aligned}
    \end{equation*}
    For $\tau=0$ and $r=0$ since $g^\tau_{ij}(rx)=\delta_{ij} + \mathcal{O}(r^2) $  it holds that $ g^\tau_{ij,l}(0)=0 $. This gives
    \begin{equation*}
        \begin{split}
            \Tilde{\pi}\left.\left( P_g^2(r,\tau,  0)\right)\right|_{r=0,\tau=0}
            &= \Bigg(\int_{\mathbb{S}^n} k_{nn}^\tau (rx) k_{jj}^\tau (rx) x^l - 2k_{nn}^\tau (rx) k_{ij}^\tau(rx) x^i x^j x^l\\ 
            &\quad + k_{ij}^\tau (rx) k_{pq}^\tau (rx) x^i x^j x^p x^q x^l d\mu\Bigg)\Big|_{r=0, \tau=0} \; e_l\\
            &=  k_{nn}^0 (0) k_{jj}^0 (0) \int_{\mathbb{S}^n}x^ld\mu \; e_l 
            - 2k_{nn}^0 (0) k_{ij}^0(0) \int_{\mathbb{S}^n}x^i x^j x^ld\mu\; e_l \\ 
            &\quad + k_{ij}^0(0) k_{pq}^0(0) \int_{\mathbb{S}^n}x^i x^j x^p x^q x^l d\mu \; e_l\\
            &= 0.
        \end{split}
    \end{equation*}
    Writing $k$ in place of $k^\tau(0)$ with $\tau=0$ and using that $\nu^\tau = x^i \frac{\partial}{\partial x^i} +  \mathcal{O}(r^2)  $, the radial derivative is given by
    \begin{equation*}
        \begin{split}
            \left.\frac{\partial}{\partial r}\Tilde{\pi}\left( P_g^2(r,\tau,  0)\right)\right|_{r=0,\tau=0} 
            &= 
            2k_{ii} k_{jj,k} \int_{\mathbb{S}^n}x^kx^ld\mu \; e_l 
            -2(k_{nn,k} k_{ij} + k_{nn} k_{ij,k}) \int_{\mathbb{S}^n}x^i x^jx^kx^l d\mu  \; e_l\\ 
            &\quad + 2k_{ij} k_{pq,k} \int_{\mathbb{S}^n}x^i x^j x^p x^px^kx^l d\mu \; e_l\\
            &=
            \frac{2|\mathbb{S}^n|}{(n+1)(n+3)(n+5) }\Big(   \partial_l(k_{ij}k_{ij} ) +4 \partial_i(k_{lj} k_{ji}) \\  
            &\quad+ ((n+1)(n+5)+1) \partial_l \left(\frac{k_{jj} k_{ii}}{2} \right) -2 (n+4) \partial_j(k_{lj} k_{ii})\Big) e_l\\
            &= 
            \frac{2|\mathbb{S}^n|}{(n+1)(n+3)(n+5) }\Big(    \partial_l(|k|^2 ) +4 \diver \left(  k^2 \right)_l \\    &+((n+1)(n+5)+1)\partial_l \Big(\frac{(\tr k)^2}{2}\Big)  - 2 (n+4) \diver \left(\tr k\cdot k\right)_l\Big) e_l.
        \end{split}
    \end{equation*}
    The derivatives with respect to $\tau$ are given by
    \begin{equation}
        \label{tau}
        \begin{split}
            &\left.\frac{\partial}{\partial \tau^\beta}\Tilde{\pi}( P_g^2(r,\tau,  0))\right|_{r=0,\tau=0}\\
            &\quad= 
            \Bigg(\frac{\partial}{\partial \tau^\beta}(k_{nn}^\tau (rx) k_{jj}^\tau (rx)) \int_{\mathbb{S}^n}x^ld\mu - \frac{\partial}{\partial \tau^\beta}(2k_{nn}^\tau (rx) k_{ij}^\tau(rx)) \int_{\mathbb{S}^n}x^i x^j x^ld\mu\\ 
            &\qquad+ \frac{\partial}{\partial \tau^\beta} (k_{ij}^\tau(rx) k_{pq}^\tau(rx) ) \int_{\mathbb{S}^n}x^i
            x^j x^p x^q x^l d\mu   \Bigg)\Bigg|_{r=0, \tau=0} e_l\\
            &\quad=0.
        \end{split}
    \end{equation}
    A computation similar to the one before gives
    \begin{equation*}
        \begin{split}
            &\left.\frac{\partial^2}{\partial r \partial \tau^\beta}\Tilde{\pi}( P_g^2(r,\tau,  0))\right|_{r=0,\tau=0}\\
            & \textcolor{white}{123456789101112131} = \frac{2|\mathbb{S}^n|}{(n+1)(n+3)(n+5) }\Big(   \partial_l\partial_{\tau^\beta } (k_{ij}k_{ij} ) +4 \partial_i\partial_{\tau^\beta }  (k_{lj} k_{ji}) \\  
            & \textcolor{white}{12345678910111213141}+ ((n+1)(n+5)+1) \partial_l \partial_{\tau^\beta } \left(\frac{k_{jj} k_{ii}}{2} \right) -2 (n+4) \partial_j \partial_{\tau^\beta }  (k_{lj} k_{ii})\Big) e_l\\
            &\textcolor{white}{123456789101112131}= 
            \frac{2|\mathbb{S}^n|}{(n+1)(n+3)(n+5) }\Big(    \partial_{\tau^\beta } \partial_l(|k|^2 ) +4\partial_{\tau^\beta }  \diver (  k^2 )_l \\    
            &\textcolor{white}{12345678910111213141}+((n+1)(n+5)+1)\partial_{\tau^\beta }  \partial_l \Big(\frac{(\tr k)^2}{2}\Big)  - 2 (n+4)\partial_{\tau^\beta }  \diver \left(\tr k \cdot k \right)_l \Big) e_l.\\
        \end{split}
    \end{equation*}
    Here we used in the last expression that partial derivatives commute when $r=0$.    

    Note that for any $\varphi_0 \in K^\bot$ one has $\Tilde{\pi}(L\varphi_0 )=0 $. Therefore for $\varphi_0 \in K^\bot$ to be fixed later we find
      \begin{equation*}
        \begin{split}
        &\left.\Tilde{\pi}\left(\frac{(H^2 -P^2)(r, \tau, r^2 \varphi_0 )-n^2}{r^3}\right)\right|_{r=0, \tau=0} \\
        &\textcolor{white}{123456789101112131415161718192}=   
        \frac{2|\mathbb{S}^n|}{(n+1)(n+3) }\Big( - \frac{n}{2} \mathrm{Sc}_{,l} -\frac{1}{(n+5)} \Big[   \partial_l(k_{ij}k_{ij} ) +4 \partial_i(k_{lj} k_{ji})\\
        &\textcolor{white}{12345678910111213141516171819202}+ ((n+1)(n+5)+1) \partial_l \left(\frac{k_{jj} k_{ii}}{2} \right) -2 (n+4) \partial_j(k_{lj} k_{ii}) \Big]\Big)e_l\\
        &\textcolor{white}{123456789101112131415161718192}= - \frac{2|\mathbb{S}^n|}{(n+1)(n+3) } \mathrm{A_{ST}}(p)_{ l}\; e_l =0
        \end{split}
    \end{equation*}
    and 
    \begin{equation*}
        \begin{split}
        &\left.\frac{\partial}{\partial \tau^\beta }\Tilde{\pi}\left(\frac{(H^2 -P^2)(r, \tau, r^2 \varphi_0 )-n^2}{r^3}\right)\right|_{r=0, \tau=0}\\
        &\quad=
        -\frac{2|\mathbb{S}^n|}{(n+1)(n+3)}\Big( \frac{n}{2} \partial_l\partial_{\tau^\beta } \mathrm{Sc} +\frac{1}{(n+5)} \Big[ \partial_l\partial_{\tau^\beta }(|k|^2 ) +4 \partial_i \partial_{\tau^\beta }  (  k^2)_i\\
        &\qquad\quad
        -2(n+4) \partial_i \partial_{\tau^\beta }  (\tr k\cdot k_{li}) + ((n+1)(n+5)+1)\partial_{\tau^\beta } \partial_l \Big(\frac{(\tr k)^2}{2}\Big)  \Big]\Big)\\
        &\quad= - \frac{2|\mathbb{S}^n|}{(n+1)(n+3) } \partial_{\tau^\beta }  \mathrm{\mathrm{A_{ST}}}(p)_{l}\; e_l.
        \end{split}
    \end{equation*}
    Since $\partial_{\tau^\beta } A_l$ is invertible, by the implicit function theorem there is a function $\tau =\tau(r, \varphi)$ with $\tau (0,\varphi_0)=0$  and $ \Tilde{\pi}\left((H^2 -P^2)(r, \tau, r^2 \varphi )\right) =0$ for $(r, \varphi)$ in a neighborhood of $(0, \varphi_0)$. We can fix $\varphi_0$ to be determined by $L\varphi_0 = \frac{1}{3}\mathrm{Ric}^0_{ij}(0) x^ix^j+ \frac{1}{2n}P^2_g(0,0,0)$. 

    This solves the equation projected to $K$ near $(r,\varphi)=(0,\varphi_0)$. Thus, it remains to solve the projection to $K^\bot$. From (\ref{general0})  we have that  (\ref{genral}) reduces to 
    \begin{equation}\label{1a}
        \begin{split}
            \frac{(H^2 -P^2)(r, \tau, r^2 \varphi )-n^2}{r^2} &=-\frac{2n}{3}  \mathrm{Ric}^{\tau}_{ij}(0) x^i x^j - \frac{2n}{4} \mathrm{Ric}^{\tau}_{ij;k}(0)x^ix^jx^k r- P_g^2(r, \tau,0)   +2 n L\varphi\\
            &+ r^2 \int_0^1 \int_0^1 t (H^2 -P^2)_{\varphi \varphi}(sr, \tau,st r^2 \varphi )  \varphi \varphi ds dt\\
            &+ r^2 \int_0^1 \int_0^1 \int_0^1 s (H^2 -P^2)_{\varphi r r}(usr, \tau,ust r^2 \varphi )   \varphi du ds dt\\
            &+ r^3 \int_0^1 \int_0^1 \int_0^1 s t (H^2 -P^2)_{\varphi \varphi r} (usr, \tau,ust r^2 \varphi )  \varphi \varphi du ds dt   + \mathcal{O}(r^2). \\
        \end{split}
    \end{equation}
    Therefore we find 
    \begin{equation*}
        \begin{split}
            \left.\left(\frac{(H^2 -P^2)(r, \tau, r^2 \varphi )-n^2}{r^2}\right)\right|_{r=0, \varphi=\varphi_0} &=  2n L\varphi_0 - \frac{2n}{3} \mathrm{Ric}^0_{ij}(0) x^ix^j- P^2_g(0,0,0)=0
        \end{split}
    \end{equation*}
    and 
    \begin{equation*}
        \begin{split}
            \left.\frac{\partial}{\partial \varphi} \left(\frac{(H^2 -P^2)(r, \tau, r^2 \varphi )-n^2}{r^2}\right)\right|_{r=0, \varphi=\varphi_0} &=  2n L\vert_{K^\bot}.
        \end{split}
    \end{equation*}
    Here $L$ is restricted to $K^\bot$ since we are considering the equation restricted to $K^\bot$. Hence, this the operator is invertible. By the implicit function theorem there exist $r_0>0$, $\tau:[0,r_0) \to \R^{n+1}$ and $\varphi:[0,r_0)\to C^{2,\frac12}(\IS^n)$ such that $(H^2 -P^2)(r, \tau(r), r^2 \varphi(r) )=n^2  $ for all $r\in(0,r_0)$. In particular, each $r\in(0,r_0)$ we obtain a STCMC surface $S_r:= S_{\varphi(r),\tau(r),r}$.
\item[The parameterization.]
    The parametrization of the family $\{S_r \mid r\in(0,r_0)\}$ is given by
    \eqref{parametrization}. Its lapse function is $\alpha= 1 +  \frac{\partial \tau}{\partial r}|_{r=0} \langle   e_k, \nu \rangle $ as in \eqref{lapse}. Again, this family forms a foliation if $|\frac{\partial\tau }{\partial r}|_{r=0}|<1 $.  To estimate $\frac{\partial\tau }{\partial r}|_{r=0} $ note that the equation $\Tilde{\pi}\left(\frac{(H^2 -P^2)(r, \tau, r^2 \varphi )}{r^3}\right)=0 $ implies that  
    \begin{equation*}
        \left.\frac{\partial}{\partial r} \Tilde{\pi}\left(\frac{(H^2 -P^2)(r, \tau, r^2 \varphi )}{r^3}\right)\right|_{r=0}=0.
    \end{equation*}
    By \eqref{kern} and chain rule this is equivalent to 
    \begin{equation}\label{estau}
        \begin{split}
            0 &= -\frac{n |\mathbb{S}^n|}{(n+1)(n+3)} \partial_{\tau^\beta } \partial_l \mathrm{Sc}^0(0) 
            \left.\frac{\partial \tau^\beta}{\partial r}\right|_{r=0} \; e_l
            - \left.\frac{\partial}{\partial r} \Tilde{\pi}\left( \frac{1}{r} P_g^2(r,\tau, 0) \right)\right|_{r=0}\\ 
            &\qquad + \frac{1}{2} \Tilde{\pi}\left( (H^2 -P^2)_{\varphi \varphi}(0, 0,0 ) \varphi_0 \varphi_0  \right)
            +\frac{1}{2} \Tilde{\pi}\left( (H^2 -P^2)_{\varphi r r}(0, 0,0 ) \varphi_0   \right).
        \end{split}
    \end{equation}
    The third term on the right satisfies
    \begin{equation*}
        \begin{split}
            \Tilde{\pi}\big( (H^2 -P^2)_{\varphi \varphi}(0, 0,0 ) \varphi_0 \varphi_0  \big)=  \Tilde{\pi}\big( 2 ((\Delta_{ \mathbb{S}^n} +n) (\Delta_{ \mathbb{S}^n} + n) + 2nH_{\varphi \varphi})  \varphi_0 \varphi_0  \big) =0
        \end{split}
    \end{equation*}
    since $P(0,0,0)=(rP_g(r,0,0 ))|_{r=0}=0$, and because $(\Delta_{ \mathbb{S}^n} +n) (\Delta_{ \mathbb{S}^n} + n)$ and $H_{\varphi \varphi}$ are even operators.  

    Using the chain rule, the fact that $\left.\frac{\partial^2}{\partial \tau^\alpha \partial \tau^\beta} \Tilde{\pi}\left( P_g^2(r,\tau(r), 0) \right)\right|_{r=0}=0$ and using that our derivatives commute for $r=0$ we find
    \begin{equation*}
        \begin{split}
            \left.\frac{\partial}{\partial r} \Tilde{\pi}\left( \frac{1}{r} P_g^2(r,\tau, 0) \right)\right|_{r=0}  = \frac{\partial^2}{\partial r \partial \tau^\beta} \Tilde{\pi}\left.\left( P_g^2(r,\tau, 0) \right)\right|_{r=0, \tau=0} \left.\frac{\partial \tau^\beta}{\partial r}\right|_{r=0}
        \end{split}
    \end{equation*}
    Inserting this into \eqref{estau} we find
    \begin{equation*}
         0 = 
         - \frac{2|\mathbb{S}^n|}{(n+1)(n+3) } \partial_{\tau^\beta } \mathrm{\mathrm{A_{ST}}}_l \left.\frac{\partial \tau^\beta}{\partial r}\right|_{r=0}\; e_l 
         +\frac{1}{2} \Tilde{\pi}\left( (H^2 -P^2)_{\varphi r r}(0, 0,0 ) \varphi_0  \right).
    \end{equation*}
    Therefore we have 
    \begin{equation*}
        \left.\frac{\partial \tau}{ \partial r}\right|_{r=0} 
        = 
        \left( \nabla \mathrm{A_{ST}} \right)^{-1} \frac{1}{2} \Tilde{\pi}\left( (H^2 -P^2)_{\varphi r r}(0, 0,0 ) \varphi_0  \right).
    \end{equation*}
    Using the linearization of $P^2$ found in \cite{STCMC}, that is 
    \begin{equation*}
        (P^2 )_{\varphi \varphi_0} = 2P \big( \left( \nabla_\nu \tr k - \nabla_\nu k(\nu, \nu)\right)\varphi_0 +2 k(\nabla \varphi_0, \nu) \big)
    \end{equation*}
    and that $\Tilde{\pi} \big((H^2)_{\varphi r r}(0, 0,0 ) \varphi_0 \big)=0 $ by Lemma \ref{1.3} we find
    \begin{equation}
        \begin{split}
            &\frac{1}{2} \Tilde{\pi}\left( (H^2 -P^2)_{\varphi r r}(0, 0,0 ) \varphi_0  \right)
            \\
            &\quad=
            \left.\Tilde{\pi}\Big( \left( \tr k -k(\nu,\nu) \right)
            \big( \left( \nabla_\nu \tr k - \nabla_\nu k(\nu, \nu)\right)\varphi_0 +2 k(\nabla \varphi_0, \nu) \big)\Big)\right|_{r=0}\\
            &\quad=
            \Bigg(\int_{\mathbb{S}^n} (\tr k  - k_{ij}\; x^i x^j  ) ( \partial_p \tr k  \; x^p - \partial_p k_{qk}  \;x^p x^q x^k ) \varphi_0 x^l\\
            &\qquad\qquad -2 \partial_\gamma \big( (\tr k  - k_{ij}\; x^i x^j  )k_{\gamma p} \; x^px^l \big)\varphi_0 d\mu\Bigg) \, e_l.
        \end{split}
    \end{equation}
    Since $\varphi_0$ is determined by $L\varphi_0 = \frac{1}{3} Ric^0_{ij}(0) x^ix^j+ \frac{1}{2n}P^2_g(0,0,0)$ one finds that  
    $\varphi_0 $ has the form 
    \begin{equation*}
        \varphi_0 =(k\ast k)_{ijpq} \;x^i x^j x^p x^q  + C\cdot ( \mathrm{Ric} +  k\ast k )_{ij} \; x^i x^j   + C \cdot (\mathrm{Sc} +  k\ast k )
    \end{equation*}
    where the $C$'s are constants depending on the dimension. From this it follows that 
    \begin{equation*}
        \begin{split}
            \frac{\partial \tau}{ \partial r}_{| r=0} = \left( \nabla \mathrm{A_{ST}} \right)^{-1} (k \ast \nabla k) (k\ast k + \mathrm{Ric}).
        \end{split}
    \end{equation*}
    Therefore 
    \begin{equation*}
        \left|\frac{\partial \tau}{ \partial r}\big|_{r=0}\right| 
        \leq 
        C |\left( \nabla \mathrm{A_{ST}} \right)^{-1}|\; |k|\; |\nabla k|\, (|k|^2 + |\mathrm{Ric}|).
    \end{equation*}
    Here $C$ only depends on $n$. Thus, if $ |\left( \nabla A \right)^{-1}| \; |k|\; |\nabla k| \, (|k|^2 + |\mathrm{Ric}|  ) $  is small enough, it follows that $|\frac{\partial \tau}{ \partial r}_{| r=0}| <1$ so that the $\{S_r\mid r\in (0,r_0)\}$ are a foliation, possibly after decreasing $r_0$ suitably.
\item[Centered parametrization:]
    Recall that the leaves of our foliation are given by normal graphs of geodesics spheres whose centers are perturbed via the parametrization (\ref{parametrization}):
    \begin{equation*}
         F_{\varphi,\tau,r}:\mathbb{R}^+ \times \mathbb{S}^n \to M : (r,x) \mapsto \exp_{c(\tau(r))}(rx(1+r^2 \varphi(r))).
    \end{equation*}
    In this step we show that there is a parametrization for which all these geodesic spheres are centered at $p$ instead of $c(\tau(r))$.
  
    Let
    \begin{equation*}
        \begin{split}
            \Psi(r,x) &:= \exp_p^{-1} \circ \exp_{c(\tau(r))}(x), \\
            \psi(r,x)&:=  \Psi(rx(1+r^2 \varphi (r)))\\
            \beta(r,x)&:= \frac{\psi(r,x)}{|\psi(r,x)|}.
        \end{split}
    \end{equation*}
    We now show that $\beta(r,\cdot)$ is a diffeomorphism for $r>0$ small enough. To this end, compute
    \begin{equation*}
        \begin{split}
            \frac{\partial  \psi}{\partial r} (x,0)
            &=\left.\left((d_x \Psi) (x +r^2 x\varphi(r) + r(r^2 x\varphi(r))_r) + \left( \frac{\partial  \Psi}{\partial r}  \right) ( r(x +r^2 x\varphi(r)))    \right)\right|_{r=0}\\
            &= x+ e_i   \left.\frac{\partial   \tau^i}{\partial r}\right|_{r=0}.  
        \end{split}
    \end{equation*}
    Hence it follows  that $  \psi(r,x) =r(x + \frac{\partial   \tau^i}{\partial r}_{|r=0} ) + \mathcal{O}(r^2)$. Since $\left|\frac{\partial   \tau}{\partial r}|_{r=0}\right| <1$ we have that for $r$ sufficiently small  $ \psi(r,x) \neq 0$ and thus
    \begin{equation*}
        \beta(r,x)
        = \frac{x + \frac{\partial   \tau }{\partial r}_{|r=0} + \mathcal{O}(r)}{\left|x + \frac{\partial   \tau}{\partial r}_{|r=0} + \mathcal{O}(r)\right|}
    \end{equation*}
    is a diffeomorphism.

    Since $\beta(r,x)$ is a diffeomorphism, for any $y \in \mathbb{S}^n$ there is an unique $x\in \mathbb{S}^n $  with $\beta(r,x)=y $, so that 
    \begin{equation*}
        \psi(r,x)= \beta(r,x) |\psi(r,x) |= y |\psi(r, \beta^{-1} (r,y)) |.
    \end{equation*}
    Using the shorthand $\Bar{\varphi}(r,y):= |\psi(r, \beta^{-1} (r,y)) |$ and recalling $\exp_p (\psi(r,x))= F_{\varphi,\tau,r}(r,x)$ this yields
    \begin{equation*}
        F_{\varphi,\tau,r}(r,x)= \exp_p (\psi(r,x) )= \exp_p (y \Bar{\varphi}(r,y) ).
    \end{equation*}
    Substituting $x=\beta^{-1} (r,y)$ gives the desired parameterization for the leaves of our foliation: 
    \begin{equation*} 
        \Bar{F}_{\varphi,\tau,r}(r,y):=F_{\varphi,\tau,r}(r, \beta^{-1} (r,y))= \exp_p (y \Bar{\varphi}(r,y) ).
    \end{equation*}
 
\item[Regularly centered:]
    To show that the foliation is regularly centered recall that the leaves of the foliation are normal graphs of the function $r^2 \varphi(r)$  over geodesics spheres of radius $r$. Therefore, the diameter of the leaves can be estimated as $ \diam S_r \leq Cr + r^2     \|\varphi\|_{\mathcal{C}^0} \leq C r$. Also, their second fundamental form can be estimated in terms of the second fundamental form of the geodesic sphere and $\Hess\varphi_0$. For geodesic spheres it holds that 
    \begin{equation*} 
      |B_{F_\tau(\mathbb{S}_r^n)}|^2= H^2+ |B_{F_\tau(\mathbb{S}_r^n)}- \tfrac{1}{n}\tr B_{F_\tau(\mathbb{S}_r^n)}  |^2 <Cr^{-2}.
    \end{equation*}
    The second estimate follows since for $r>0$ sufficiently small, the mean curvature dominates the trace free part. Using that $\| \varphi\|_{\mathcal{C}^2} < C$ with $C$ depending on the value of $\mathrm{Ric} $ and $k$ in these coordinates at $p$ we have
    \begin{equation*}
        |B_{S_r}|<C\big(|B_{F_\tau(\mathbb{S}_r^n)} | +r^2| \varphi|_{\mathcal{C}^2}\big) <C r^{-1}.
    \end{equation*}
    Hence, our foliation satisfies that  $ |B_{S_r}|\cdot \diam S_r < C  $ so that 
    \begin{equation*} 
        \sup_{S_r \in \mathcal{F} } \big( \sup_{S_r}|B_{S_r}|\cdot \diam S_r  \big)<\infty.
    \end{equation*}
    \end{steps}
\end{proof}
\begin{remark}\ 
    \begin{enum}
    \item The condition $C |(\nabla \mathrm{A_{ST}})^{-1}|\, (|k|^2 + |\mathrm{Ric}|  ) |k| |\nabla k|<1  $ is sufficient but not 
        necessary for the $S_r$ to be a foliation. The necessary condition is that $\alpha= 1 +  \frac{\partial \tau^k}{\partial r}_{|r=0} \langle   e_k, \nu \rangle >0$. If this condition does not hold, the proof of the theorem still guarantees that the  $S_r$'s are  a regularly centered concentration of STCMC surfaces at $p$.
    \item For $k=0$, we recover the CMC result of Ye \cite{Ye}.
    \end{enum}
\end{remark}

\section{Construction of the Constant Expansion Foliations}

Note that as $k_{\tau,r} \rightarrow 0$  when $r \rightarrow 0$ the CE stability operator when $r \rightarrow 0$ is given by the one  of the CMC operator:
\begin{equation*}
  (H\pm P )_{\varphi}(0,\tau, 0)= (-\Delta_{ \mathbb{S}^n} -n)=L   
\end{equation*}
From \eqref{H} and the scaling of $P$ it follows that
\begin{equation}
    \label{general00}
   (H\pm P)(r,\tau, 0)(x)= n-\frac{1}{3}  \mathrm{Ric}^{\tau}_{ij}(0) x^i x^j r^2- \frac{1}{4} \mathrm{Ric}^{\tau}_{ij;k}(0)x^ix^jx^k r^3 \pm r P_g(r, \tau,0)  + \mathcal{O}(r^4)  
\end{equation}
Thus \cite[Lemma 1.2]{Ye} directly gives 
\begin{lemma}
    The projection to the kernel of the expansions is
    \begin{equation*}
        \begin{split}
            \Tilde{\pi}\left( (H \pm P)(r,\tau,  \varphi)\right)_{|\substack{r=0,\tau=0}}&= -\frac{|\mathbb{S}^n|}{2(n+1)(n+3) } r^3 \,\mathrm{Sc}_{,l} e_l \pm  \Tilde{\pi}\left(P (r,\tau, 0)\right)\\ &+\Tilde{\pi}\left(\int_0^1 (H \pm P)_{\varphi}(r,\tau, t \varphi) dt \right) + \mathcal{O}(r^5)
        \end{split}
    \end{equation*}
\end{lemma}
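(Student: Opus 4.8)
This is the constant-expansion counterpart of Lemma~\ref{1.2}, and the reduction to \cite[Lemma~1.2]{Ye} is if anything simpler than in the STCMC case: here $P$ enters \emph{linearly} and scales like $r$ (rather than $P^2\sim r^2$), so the mean-curvature part is handled verbatim by Ye's computation while $P$ is merely carried along, and the two signs $\pm$ never interact with the argument. The plan is to Taylor-expand $(H\pm P)$ in the $\varphi$-slot and then read off the $\Tilde{\pi}$-projection of the geodesic-sphere part from \eqref{general00} (equivalently \eqref{H}).

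First I would use the fundamental theorem of calculus in the last variable,
\begin{equation*}
  (H\pm P)(r,\tau,\varphi)=(H\pm P)(r,\tau,0)+\int_0^1 (H\pm P)_\varphi(r,\tau,t\varphi)\,\varphi\,dt ,
\end{equation*}
and apply the linear map $\Tilde{\pi}$, splitting $(H\pm P)(r,\tau,0)=H(r,\tau,0)\pm P(r,\tau,0)$. This already produces the three groups on the right-hand side of the statement; it remains to identify $\Tilde{\pi}(H(r,\tau,0))$. Plugging in \eqref{H} with $\tau=0$: the constant term $n$ and the quadratic coefficient $\mathrm{Ric}^\tau_{ij}(0)x^ix^j$ are even functions of $x$, hence $L^2(\mathbb{S}^n)$-orthogonal to the first spherical harmonics $x^l$ and killed by $\Tilde{\pi}$; the cubic coefficient $\mathrm{Ric}^\tau_{ij;k}(0)x^ix^jx^k$ is paired against $x^l$ through $\int_{\mathbb{S}^n}x^ix^jx^kx^l\,d\mu=\frac{|\mathbb{S}^n|}{(n+1)(n+3)}\big(\delta_{ij}\delta_{kl}+\delta_{ik}\delta_{jl}+\delta_{il}\delta_{jk}\big)$, and the contracted second Bianchi identity $\mathrm{Ric}_{ij;i}=\tfrac12\mathrm{Sc}_{,j}$ collapses the three contractions of $\mathrm{Ric}^\tau_{ij;k}(0)$ into $2\,\mathrm{Sc}_{,l}$, yielding the coefficient $-\frac{|\mathbb{S}^n|}{2(n+1)(n+3)}r^3\,\mathrm{Sc}_{,l}e_l$; finally the $\mathcal{O}(r^4)$ tail in \eqref{H} has a degree-four (hence even) leading polynomial in $x$, so $\Tilde{\pi}$ of it is $\mathcal{O}(r^5)$. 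This is precisely \cite[Lemma~1.2]{Ye} applied to the $H$-part. The remaining term $\pm\Tilde{\pi}(P(r,\tau,0))$ is left untouched at this stage; it will be expanded explicitly later in terms of the Taylor coefficients of $P_g$ in $r$ and of their $\tau$-derivatives, exactly as $\Tilde{\pi}(P^2(r,\tau,0))$ was treated in the proof of Theorem~\ref{priSTCMC}.

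I do not expect a genuine obstacle: the mean-curvature computation is literally Ye's, and the only genuinely new object, $P$, appears linearly so that both signs are handled at once. The only points requiring care are bookkeeping: keeping track of the scaling $P=rP_g$ so that the numerical prefactor of the $\mathrm{Sc}_{,l}$-term comes out correctly, and the parity/spherical-integral accounting that makes the error genuinely $\mathcal{O}(r^5)$ rather than $\mathcal{O}(r^4)$. Reassembling the three pieces gives the displayed identity.
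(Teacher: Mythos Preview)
Your proposal is correct and follows exactly the route the paper indicates: the paper states the lemma without proof, noting only that ``\cite[Lemma 1.2]{Ye} directly gives'' it, and your argument is precisely the unpacking of that reduction---split off $P(r,\tau,0)$ linearly, apply Ye's parity/Bianchi computation to the $H$-part via \eqref{H}, and carry the $\varphi$-remainder along by the fundamental theorem of calculus. The only cosmetic discrepancy is that the paper suppresses the trailing $\varphi$ in the integrand $(H\pm P)_\varphi(r,\tau,t\varphi)\,\varphi$; your formula is the literal one.
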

The additional term is computed in the following lemma:
\begin{lemma}
    \label{deriv}
    It holds that
    \begin{equation}
    \begin{aligned}
        0 
        &=
        \Tilde{\pi}\left((P_g)(0,\tau,0) \right) 
        = \left.\frac{\partial^2}{\partial r^2}\Tilde{\pi}\left( P_g(r,\tau,  0)\right)\right|_{r=0,\tau=0}
        = \left.\frac{\partial}{\partial \tau^\beta}\Tilde{\pi}( P_g(r,\tau,  0))\right|_{r=0,\tau=0}\\
        &=\frac{\partial^3}{\partial \tau^\beta \partial r^2 }\Tilde{\pi}\left( P_g(r,\tau,  0)\right)_{|r=0,\tau=0}.
        \end{aligned}
    \end{equation}
    \begin{equation}
        \left.\frac{\partial}{\partial r}\Tilde{\pi}\left( P_g(r,\tau,  0)\right)\right|_{r=0,\tau=0} =\frac{|\mathbb{S}^n|}{(n+1) } E_l\, e_l
    \end{equation}
    \begin{equation}
        \left.\frac{\partial^2}{\partial r \partial \tau^\beta}\Tilde{\pi}( P_g(r,\tau,  0))\right|_{r=0,\tau=0} =\frac{|\mathbb{S}^n|}{(n+1) } \partial_{\tau^\beta} E_l\, e_l
    \end{equation}
\end{lemma}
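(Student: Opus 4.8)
The plan is to Taylor-expand the function $P_g(r,\tau,0)$ on $\mathbb{S}^n$ in powers of $r$ and to read off the coefficients of $r^0$, $r^1$ and $r^2$. When $\varphi=0$ the competitor surface is the round sphere $\mathbb{S}^n\subset(\mathbb{B}_{2r_p},g_{\tau,r})$, and since $(k_{\tau,r})_{ij}(x)=r\,k^\tau_{ij}(rx)$ and $(g_{\tau,r})_{ij}(x)=g^\tau_{ij}(rx)$, the definition of $P_g$ gives
\[
  P_g(r,\tau,0)(x)=(g_{\tau,r})^{ij}(x)\,k^\tau_{ij}(rx)-k^\tau_{ab}(rx)\,\nu_{\tau,r}^a(x)\,\nu_{\tau,r}^b(x).
\]
From the standard expansion of a metric in Riemannian normal coordinates, $(g_{\tau,r})_{ij}(x)=\delta_{ij}+r^2 h^{(2)}_{\tau,ij}(x)+\mathcal{O}(r^3)$ with $h^{(2)}_\tau$ homogeneous of degree $2$ in $x$; hence $(g_{\tau,r})^{ij}$ has the same structure, and the $g_{\tau,r}$-unit normal of $\mathbb{S}^n$ is $\nu_{\tau,r}(x)=x+r^2\,\delta\nu^{(2)}_\tau(x)+\mathcal{O}(r^3)$, where $\delta\nu^{(2)}_\tau$ is homogeneous of \emph{odd} degree in $x$ (being built from $h^{(2)}_\tau$ contracted once against $x$). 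Expanding $k^\tau_{ij}(rx)$ in $r$ and multiplying out, the coefficients of $r^0,r^1,r^2$ in $P_g(r,\tau,0)(x)$ are
\[
  Q_0(x)=k^\tau_{ii}(0)-k^\tau_{ab}(0)x^ax^b,\qquad Q_1(x)=k^\tau_{ii,p}(0)x^p-k^\tau_{ab,p}(0)x^ax^bx^p,
\]
and a function $Q_2(x)$ that is a sum of homogeneous polynomials of degree $2$ and $4$ in $x$.

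For the vanishing identities I would argue by parity. The map $\tilde{\pi}$ sends $f\in\mathcal{C}^0(\mathbb{S}^n)$ to $\big(\int_{\mathbb{S}^n}f\,x^l\,d\mu\big)e_l$, which is zero whenever $f$ is even in $x$. By construction $Q_0$ and $Q_2$ are even and $Q_1$ is odd. Since $P_g(0,\tau,0)=Q_0$ and $\frac{\partial^2}{\partial r^2}P_g(r,\tau,0)\big|_{r=0}=2Q_2$, this gives at once $\tilde{\pi}(P_g(0,\tau,0))=0$ and $\frac{\partial^2}{\partial r^2}\tilde{\pi}(P_g(r,\tau,0))\big|_{r=0,\tau=0}=0$; moreover these hold for all small $\tau$, and differentiating in $\tau^\beta$ preserves the parity in $x$, so also $\frac{\partial}{\partial\tau^\beta}\tilde{\pi}(P_g(r,\tau,0))\big|_{r=0,\tau=0}=0$ and $\frac{\partial^3}{\partial\tau^\beta\,\partial r^2}\tilde{\pi}(P_g(r,\tau,0))\big|_{r=0,\tau=0}=0$.

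For the two non-vanishing identities I would compute $\tilde{\pi}(Q_1)$ using the round-sphere moments $\int_{\mathbb{S}^n}x^px^l\,d\mu=\frac{|\mathbb{S}^n|}{n+1}\delta^{pl}$ and $\int_{\mathbb{S}^n}x^ax^bx^px^l\,d\mu=\frac{|\mathbb{S}^n|}{(n+1)(n+3)}\big(\delta^{ab}\delta^{pl}+\delta^{ap}\delta^{bl}+\delta^{al}\delta^{bp}\big)$. At $\tau=0$, where partial derivatives in the normal chart coincide with covariant derivatives at the centre, and using the symmetry of $k$, this yields
\[
  \frac{\partial}{\partial r}\tilde{\pi}(P_g(r,\tau,0))\big|_{r=0,\tau=0}=\tilde{\pi}(Q_1)=\frac{|\mathbb{S}^n|}{n+1}\,E_l\,e_l,\qquad E_l=\frac{1}{n+3}\big((n+2)\nabla_l\tr k-2(\diver k)_l\big),
\]
i.e.\ $E$ is the $1$-form $\mathrm{A_{CE}}$ up to its dimensional normalisation. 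Finally, differentiating in $\tau^\beta$ and using — exactly as in the proof of Theorem~\ref{priSTCMC} — that the $\tau$-derivative at $\tau=0$ of a tensor component expressed in the frame parallel-transported along the radial geodesics from $p$ is the corresponding covariant derivative at $p$, together with the commutation of $\frac{\partial}{\partial r}$ and $\frac{\partial}{\partial\tau^\beta}$, gives $\frac{\partial^2}{\partial r\,\partial\tau^\beta}\tilde{\pi}(P_g(r,\tau,0))\big|_{r=0,\tau=0}=\frac{|\mathbb{S}^n|}{n+1}\,\partial_\beta E_l\,e_l$.

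The one point requiring care is the expansion of the unit normal $\nu_{\tau,r}$: one must check that its leading correction is of order $r^2$ and odd in $x$, since this is what forces $\nu_{\tau,r}\otimes\nu_{\tau,r}$, and hence $Q_2$, to be even and makes the parity argument for the $\partial_r^2$ (and mixed $\partial_r^2\partial_\tau$) statements go through. After that, everything reduces to tracking homogeneity degrees in $x$ and applying the two moment identities.
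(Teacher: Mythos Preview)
Your proof is correct and follows essentially the same route as the paper: both reduce the vanishing claims to the parity of the integrand against the first spherical harmonics, and both compute the non-vanishing $r$-derivative via the standard sphere moment integrals. The paper simply writes out each integral and observes that an odd number of $x$-factors makes it vanish, whereas you package the same observation as an explicit even/odd decomposition of the Taylor coefficients $Q_0,Q_1,Q_2$; the underlying content is identical.

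One simplification regarding your flagged ``point requiring care'': since $\mathbb{S}^n\subset(\mathbb{B}_{2r_p},g_{\tau,r})$ is the geodesic sphere of radius~$1$ in the very normal-coordinate chart defining $g_{\tau,r}$, the Gauss lemma gives $\nu_{\tau,r}(x)=x$ \emph{exactly}, with no $r^2$-correction at all. So there is nothing to verify about $\delta\nu^{(2)}_\tau$, and the evenness of $Q_2$ follows immediately from the expansions of $g_{\tau,r}^{ij}$ and $k^\tau_{ij}(rx)$ alone.
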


\begin{proof}
    Using that $ \frac{\partial}{\partial r} g^\tau _{|r=0} =0$ it follows that
    \begin{equation*} 
        \Tilde{\pi}\left((P_g)(0,\tau,0) \right) = \int_{\mathbb{S}^n} k_{ii} x^l -k_{ij} x^i x^j x^l d\mu =0
    \end{equation*}
    \begin{equation*}
        \begin{aligned}
            \frac{\partial}{\partial r}\Tilde{\pi}\left( P_g(r,\tau,  0)\right)_{|r=0,\tau=0} =&  k_{jj,k} \int_{\mathbb{S}^n}x^kx^ld\mu \; e_l  - k_{ij,k} \int_{\mathbb{S}^n}x^i x^jx^kx^l d\mu  \; e_l\\
            =&\frac{|\mathbb{S}^n|}{(n+1) } \Big( \frac{n+2}{n+3} \partial_l \tr k -2 \partial_i k_{li}  \Big) e_l= \frac{|\mathbb{S}^n|}{(n+1) } E_l\, e_l,
        \end{aligned}
    \end{equation*}
    and
    \begin{equation*} 
        \begin{aligned}
            &\left.\frac{\partial^2}{\partial r^2}\Tilde{\pi}\left( P_g(r,\tau,  0)\right)\right|_{r=0,\tau=0} \\
            &\quad=
            k_{ij}\left.\frac{\partial^2 g^{\tau \,ij}}{\partial r^2}\right|_{r=0} \int_{\mathbb{S}^n}x^ld\mu \; e_l + k_{jj,kp} \int_{\mathbb{S}^n}x^kx^lx^pd\mu \; e_l  - k_{ij,k} \int_{\mathbb{S}^n}x^i x^jx^kx^lx^p d\mu  \; e_l =0.
        \end{aligned}
    \end{equation*}
    The derivatives with respect to $\tau$ are computed as in \eqref{tau}
    \begin{equation*} 
        \left.\frac{\partial}{\partial \tau^\beta}\Tilde{\pi}( P_g(r,\tau,  0))\right|_{r=0,\tau=0}=0.
    \end{equation*}
    Computations similar to the ones before yield
    \begin{equation*}
        \begin{split}
            \left.\frac{\partial^2}{\partial r \partial \tau^\beta}\Tilde{\pi}( P_g(r,\tau,  0))\right|_{r=0,\tau=0}
            &= 
            \frac{|\mathbb{S}^n|}{(n+1) } \left( \frac{n+2}{n+3} \partial_{\tau^\beta} \partial_l \tr k -2 \partial_{\tau^\beta} \partial_i k_{li}  \right) e_l =\frac{|\mathbb{S}^n|}{(n+1) } \partial_{\tau^\beta} E_l \,e_l.
        \end{split}
    \end{equation*}
\end{proof}
The following result follows completely analogous to the the previous section, with appropriate modifications.
\begin{theorem}
\label{priCE}
    Let  $p\in M$ be such that at $p$, $\mathrm{A_{CE}}=0$, $\nabla \mathrm{A_{CE}}$ is invertible and $k=0$. If $\frac{1}{(n+3)} |(\nabla \mathrm{A_{CE}})^{-1}|\, |   \mathrm{\hat{A}_{CE}}^\pm| <1  $   at $p$   there exist  $\delta^\pm>0$ and a smooth foliation $\mathcal{F}^\pm= \{ S_r^\pm : r\in (0, \delta^\pm)  \} $ of constant expansion  spheres with $H_{S_r^{\pm}}\pm P_{S_r^\pm}= \frac{n}{r}$. This foliation  $\mathcal{F}^\pm$ is regularly centered at $p$ and its leaves $S_r^\pm$ can be expressed as normal graphs over geodesic spheres of radius~$r$. 
\end{theorem}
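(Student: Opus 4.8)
The plan is to transcribe the Lyapunov--Schmidt argument of Theorem~\ref{priSTCMC}, replacing the operator $H^2-P^2$ by $H\pm P$, the one-form $\mathrm{A_{ST}}$ by $\mathrm{A_{CE}}$, and the $P^2$-computations of Section~3 by the two kernel lemmas above (the projection-to-kernel lemma preceding Lemma~\ref{deriv}, and Lemma~\ref{deriv} itself). Working in the rescaled normal coordinates $F_\tau\circ\alpha_r$ on $(\mathbb{B}_{2r_p},g_{\tau,r},k_{\tau,r})$, I seek $\tau=\tau(r)$ and $\varphi=\varphi(r)$ with $(H\pm P)(r,\tau(r),r^2\varphi(r))=n$ for small $r$. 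Since $k_{\tau,0}=0$, one has $(H\pm P)_\varphi(0,\tau,0)=L:=-\Delta_{\IS^n}-n$, so the kernel $K$ is spanned by the first spherical harmonics $x^l$; moreover $(H\pm P)_{r\varphi}(0,0,0)=0$, because $H_{r\varphi}(0,\tau,0)=0$ by Lemma~\ref{1.3} and the hypothesis $k(p)=0$ kills the $\varphi$-linearization of $P_g$ at $r=0$.

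\textbf{Construction of the family.} Expanding $(H\pm P)(r,\tau,r^2\varphi)$ as in \eqref{genral} and applying $\tilde\pi$, the two kernel lemmas give, for $\varphi_0\in K^\perp$,
\begin{equation*}
 \tilde\pi\!\left(\frac{(H\pm P)(r,\tau,r^2\varphi_0)-n}{r^2}\right)\!\Big|_{r=0,\tau=0}= \pm\frac{|\IS^n|}{n+1}\,\mathrm{A_{CE}}_l\,e_l=0 ,
\end{equation*}
since the quadratic $\mathrm{Ric}$-term has no component along $K$, the $\mathrm{Sc}$-term is of order $r$, and $\tilde\pi(L\varphi_0)=0$. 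Differentiating in $\tau$ gives $\pm\frac{|\IS^n|}{n+1}\nabla\mathrm{A_{CE}}$, invertible by hypothesis, so the implicit function theorem produces $\tau=\tau(r,\varphi)$ with $\tau(0,\varphi_0)=0$ solving the kernel equation near $(0,\varphi_0)$. For the complementary equation, \eqref{general00} shows that dividing $(H\pm P)-n$ by $r^2$ and evaluating at $r=0$ returns, on $K^\perp$, the expression $L\varphi_0-\tfrac13\mathrm{Ric}^0_{ij}(0)x^ix^j\pm\partial_r(P_g)(0,0,0)$; this limit exists only because $P_g(0,0,0)=0$, which is precisely the hypothesis $k(p)=0$. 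As $\partial_r(P_g)(0,0,0)$ is a polynomial in $x$ of degrees $1$ and $3$ with coefficients linear in $\nabla k(p)$, fixing $\varphi_0\in K^\perp$ by $L\varphi_0=\tfrac13\mathrm{Ric}^0_{ij}(0)x^ix^j\mp\pi^\perp\big(\partial_r(P_g)(0,0,0)\big)$ makes the $K^\perp$-equation vanish at $r=0$, while its $\varphi$-derivative there is $L|_{K^\perp}$, invertible. The implicit function theorem then supplies $\delta^\pm>0$, $\tau(r)$ and $\varphi(r)$ with $\varphi(0)=\varphi_0$ and $(H\pm P)(r,\tau(r),r^2\varphi(r))=n$ on $(0,\delta^\pm)$, that is, a family $\{S_r^\pm\}$ of constant-expansion spheres with $H_{S_r^\pm}\pm P_{S_r^\pm}=\tfrac nr$, each a normal graph over a geodesic sphere of radius $r$.

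\textbf{Foliation property, centering, regularity.} As in \eqref{lapse} the lapse of this family is $\alpha=1+\frac{\partial\tau^k}{\partial r}|_{r=0}\langle e_k,\nu\rangle$, so it suffices to show $\left|\frac{\partial\tau}{\partial r}\right|_{r=0}<1$. Differentiating the kernel identity $\tilde\pi\big(r^{-2}((H\pm P)(r,\tau,r^2\varphi)-n)\big)=0$ in $r$ at $r=0$ and using Lemma~\ref{deriv} (which makes the surviving $\tau$-dependence enter only through $\partial_r\partial_\beta\tilde\pi(P_g)=\tfrac{|\IS^n|}{n+1}\partial_\beta\mathrm{A_{CE}}_l\,e_l$), one solves for $\frac{\partial\tau}{\partial r}|_{r=0}=(\nabla\mathrm{A_{CE}})^{-1}\cdot[\text{explicit term}]$. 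Inserting the geodesic-sphere expansion \eqref{general00} to the order needed, the radial derivatives of $\tilde\pi(P_g)$ (where the normal-coordinate expansion of the metric produces $\langle\mathrm{Ric},\nabla k\rangle$- and $\mathrm{Sc}\,\nabla\tr k$-type contractions), and the explicit form of $\varphi_0$, and then simplifying with $\mathrm{A_{CE}}(p)=0$, $k(p)=0$ and the contracted second Bianchi identity, the bracket collapses to a dimensional multiple of $\tfrac1{n+3}\mathrm{\hat{A}_{CE}}^\pm$; hence $\left|\frac{\partial\tau}{\partial r}\right|_{r=0}\le\tfrac{C}{n+3}|(\nabla\mathrm{A_{CE}})^{-1}|\,|\mathrm{\hat{A}_{CE}}^\pm|$, which is $<1$ by hypothesis after shrinking $\delta^\pm$, so $\alpha>0$ and $\{S_r^\pm\}$ is a foliation. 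Producing a parameterization whose leaves are graphs over geodesic spheres genuinely centered at $p$ (using $\left|\frac{\partial\tau}{\partial r}\right|_{r=0}<1$ to see the associated radial map is a diffeomorphism) and verifying \eqref{regular} through $\diam S_r^\pm\le Cr$ and $|B_{S_r^\pm}|\le Cr^{-1}$ (the latter from $|B|^2=H^2+|B-\tfrac1n\tr B|^2<Cr^{-2}$ for geodesic spheres together with $\|\varphi\|_{C^2}\le C$) are then word-for-word as in the last two steps of Theorem~\ref{priSTCMC}; smoothness of the leaves follows by the usual elliptic bootstrap.

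\textbf{Main obstacle.} The only non-mechanical point is the identification, in the lapse step, of $\frac{\partial\tau}{\partial r}|_{r=0}$ as a dimensional multiple of $\tfrac1{n+3}(\nabla\mathrm{A_{CE}})^{-1}\mathrm{\hat{A}_{CE}}^\pm$: one must carry the expansion of $H\pm P$ one order beyond what the existence step uses, keep track of the degree-$3$ part of $\varphi_0$ coming from $\nabla k(p)$, and use the constraint $\mathrm{A_{CE}}(p)=0$ to trade $\nabla\tr k$ against $\diver k$ so that the resulting $\nabla\mathrm{Sc}$-, $\langle\mathrm{Ric},\nabla k\rangle$- and $\mathrm{Sc}\,\nabla\tr k$-combinations assemble precisely into the coefficients of $\mathrm{\hat{A}_{CE}}^\pm$. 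Everything else is a transcription of Sections~3--4; the structural novelty is that the hypothesis $k(p)=0$ here plays the role of the $k\ast k$-smallness assumption of the STCMC case --- it is what renders the $r^{-2}$-rescaled complementary equation regular at $r=0$ and what pushes the obstruction to foliation from a $k\ast\nabla k$-quantity up to the curvature-derivative quantity $\mathrm{\hat{A}_{CE}}^\pm$.
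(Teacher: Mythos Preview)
Your overall strategy is the paper's, but there is a concrete error that propagates into the lapse computation. You assert that $(H\pm P)_{r\varphi}(0,0,0)=0$ because ``$k(p)=0$ kills the $\varphi$-linearization of $P_g$ at $r=0$.'' This is false. Since $P=rP_g$, one has $P_{r\varphi}(0,\tau,0)=(P_g)_\varphi(0,\tau,0)$, and the linearization formula
\[
(P_g)_\varphi\,\varphi_0=\big(\nabla_\nu\tr k-\nabla_\nu k(\nu,\nu)\big)\varphi_0+2k(\nabla\varphi_0,\nu)
\]
shows that $k(p)=0$ kills only the last term; the first two involve $\nabla k(p)$ and survive. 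Thus $(H\pm P)_{r\varphi}(0,0,0)\neq 0$ in general. (This is exactly where the CE case differs from STCMC: for $H^2-P^2$ the cross term factors through $P$ or $P_\varphi$ at $r=0$, both of which vanish; for $H\pm P$ it does not.) The error is harmless in the existence step, because after dividing by $r^2$ the $r^3$-term $(H\pm P)_{r\varphi}\varphi_0\,r^3$ still vanishes at $r=0$.

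It is not harmless in the lapse step, and this is why your identification of $\frac{\partial\tau}{\partial r}\big|_{r=0}$ is mislocated. Differentiating the kernel identity in $r$ produces, besides the $\nabla\mathrm{Sc}$ term and the $\nabla\mathrm{A_{CE}}\cdot\frac{\partial\tau}{\partial r}$ term, precisely $\tilde\pi\big((H\pm P)_{r\varphi}(0,0,0)\varphi_0\big)$. It is this term --- $\nabla k(p)$ from $(P_g)_\varphi$ paired with the $\mathrm{Ric}$- and $\mathrm{Sc}$-pieces of the explicit $\varphi_0$ in~\eqref{varphi} --- that yields the $\langle\mathrm{Ric},\nabla k\rangle$ and $\mathrm{Sc}\,\nabla\tr k$ contractions in $\mathrm{\hat A_{CE}}^\pm$; see~\eqref{proj0}. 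Your alternative source, ``radial derivatives of $\tilde\pi(P_g)$ where the normal-coordinate expansion of the metric produces $\langle\mathrm{Ric},\nabla k\rangle$-type contractions,'' does not exist: Lemma~\ref{deriv} gives $\partial_r^2\tilde\pi(P_g)|_{r=0}=0$ and $\partial_{\tau}\partial_r^2\tilde\pi(P_g)|_{r=0}=0$, so no curvature terms enter there. The contracted Bianchi identity is not used; the only simplification is $\mathrm{A_{CE}}(p)=0$, which lets one replace $\diver k$ by $\tfrac{n+2}{2(n+3)}\nabla\tr k$. Finally, the paper obtains $\frac{\partial\tau}{\partial r}\big|_{r=0}=\frac{1}{n+3}(\nabla\mathrm{A_{CE}})^{-1}\mathrm{\hat A_{CE}}^\pm$ with the \emph{exact} constant matching the hypothesis; your ``$\le \tfrac{C}{n+3}\cdots$'' with an unspecified $C$ does not connect to the stated assumption, and ``shrinking $\delta^\pm$'' cannot alter a value computed at $r=0$.
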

\begin{proof}
    Similarly, as in \eqref{genral}, it holds that 
    \begin{equation}\label{genral2}
        \begin{split}
            (H \pm P)(r, \tau, r^2 \varphi )=& (H \pm P)(r, \tau, 0 ) +(H \pm P)_\varphi (0, \tau, 0) \varphi r^2 + (H \pm P)_{r\varphi} (0,\tau,0 )\varphi r^3\\
            &+ r^4 \int_0^1 \int_0^1 t (H \pm P)_{\varphi \varphi}(sr, \tau,st r^2 \varphi )  \varphi \varphi\ds\dt\\
            &+ r^4 \int_0^1 \int_0^1 \int_0^1 s (H \pm P)_{\varphi r r}(usr, \tau,ust r^2 \varphi )   \varphi \du \ds \dt\\
            &+ r^5 \int_0^1 \int_0^1 \int_0^1 s t (H\pm P)_{\varphi \varphi r}(usr, \tau,ust r^2 \varphi )  \varphi \varphi \du \ds \dt\\
        \end{split}
    \end{equation}
    At first, consider the projection onto $K$, dividing the equation by $r^2$. This gives that for arbitrary $\varphi_0 \in K^\bot$
    \begin{equation*}
        \label{ker}
        \begin{split}
            &\left.\Tilde{\pi}\left(\frac{(H \pm P)(r, \tau, r^2 \varphi )-n}{r^2}\right)\right|_{r=0,\tau=0 ,\varphi=\varphi_0} \\
            &\quad=
            \left.\pm  \Tilde{\pi}\left( \frac{1}{r^2} P(r,\tau,0) \right)\right|_{r=0,\tau=0}
            = \left. \pm  \Tilde{\pi}\left( \frac{1}{r} P_g(r,\tau,0) \right)\right|_{r=0,\tau=0}
            = \pm \frac{|\mathbb{S}^n|}{(n+1)} \mathrm{A_{CE}}_{l}\, e_l =0
        \end{split}
    \end{equation*}
    and 
    \begin{equation*}
        \begin{split}
            &\left.\frac{\partial}{\partial \tau^\beta }\Tilde{\pi}\left(\frac{(H \pm P)(r, \tau, r^2 \varphi )-n}{r^2}\right)\right|_{r=0, \tau=0, \varphi=\varphi_0 }
            \\ 
            &\quad= 
            \left.\pm \frac{\partial}{\partial \tau^\beta} \Tilde{\pi}\left( \frac{1}{r} P_g(r,\tau,0) \right)\right|_{r=0,\tau=0}
            = \left.\pm \frac{\partial^2}{\partial  \tau^\beta \partial r} \Tilde{\pi}\left( \frac{1}{r} P_g(r,\tau,0) \right)\right|_{r=0,\tau=0}
            = \pm \frac{|\mathbb{S}^n|}{(n+1)} \partial_{\tau^\beta} \mathrm{A_{CE}}_l\, e_l. 
        \end{split}
    \end{equation*}
    By assumption $\partial_{\tau^\beta} \mathrm{A_{CE}}_l$ is invertible. Hence, by the implicit function theorem, there is a function $\tau =\tau(r, \varphi)$ with $\tau (0,\varphi_0)=0$  and $ \Tilde{\pi}\left((H \pm P)(r, \tau, r^2 \varphi )\right) =0$ for $(r, \varphi)$ in a neigborhood of  $(0, \varphi_0)$. We choose $\varphi_0$ to be determined by $L\varphi_0 =\pi^\bot ( \frac{1}{3} \mathrm{Ric}^0_{ij}(0) x^ix^j \mp  \frac{\partial}{\partial r} \left(  P_g(r,\tau,0) \right)_{|r=0,\varphi_0=0} )$.

    It remains to consider the projection to $K^\bot$. From \eqref{genral2} and \eqref{general00} we have 
    \begin{equation*}
        \begin{split}
            \pi^\bot \left(\frac{(H \pm P)(r, \tau, r^2 \varphi )-n}{r^2}\right)_{|r=0, \varphi=\varphi_0} &=   L\varphi_0- \pi^\bot(  \frac{1}{3} \mathrm{Ric}^0_{ij}(0) x^ix^j \mp \frac{\partial}{\partial r} \left(  P_g(r,\tau,0) \right)_{|r=0,\tau=0} )=0
        \end{split}
    \end{equation*}
    where we used that $P(0,0,0)=0$. Hence,
    \begin{equation*}
        \begin{split}
            \frac{\partial}{\partial \varphi} \left(\frac{(H \pm P)(r, \tau, r^2 \varphi )-n}{r^2}\right)_{|r=0, \varphi=\varphi_0} &=   L\vert_{K^\bot}.
        \end{split}
    \end{equation*}
    Here $L$ is restricted to $K^\bot$ since our equation was restricted to $K^\bot$. Therefore the operator is invertible and thus, by the implicit function theorem, there exist $r_0>0$, $\tau:(0,r_0)\to M$, and $\varphi:(0,r_0)\to C^{2,\alpha}(S^2)$ such that $(H\pm P)(r, \tau(r), r^2 \varphi(r) )=n $ for all $r\in(0,r_0)$. This means that for each $r$ we obtain a constant expansion surface $S_r:= S_{r,\tau(r),\varphi(r)}$.
    
    To show that the $\{S_r\mid r\in(0,r_0)\}$ form a foliation, we estimate $\frac{\partial \tau^\beta}{ \partial r }$ as in the proof of Theorem \ref{priSTCMC}. To this end compute 
    \begin{equation}
    \label{cefoli}
    \begin{split}
        0=&\frac{\partial}{\partial r} \Tilde{\pi}\left.\left(\frac{(H \pm P)(r, \tau(r), r^2 \varphi(r) )-n}{r^2}\right)\right|_{r=0}\\
        =&-\frac{|\mathbb{S}^n|}{2(n+1)(n+3) }  \,\mathrm{Sc}_{,l} e_l + \Tilde{\pi}\left( (H \pm P)_{r\varphi} (0,\tau,0 )\varphi_0 \right) \pm \frac{\partial}{\partial r} \left.\left(\frac{1}{r}  \Tilde{\pi}\left(P_g (r,\tau, 0)\right)\right)\right|_{r=0, \tau=0}  \\
        =&-\frac{|\mathbb{S}^n|}{2(n+1)(n+3) }  \,\mathrm{Sc}_{,l} e_l + \Tilde{\pi}\left( (H \pm P)_{r\varphi} (0,\tau,0 )\varphi_0 \right) \mp \left.\left( \frac{1}{r^2} \Tilde{\pi}\left(P_g (r,\tau, 0)\right) \right)\right|_{r=0} \\
        &\pm \left.\left(\frac{1}{r} \frac{\partial}{\partial r} \Tilde{\pi}\left(P_g (r,\tau, 0)\right)\right)\right|_{r=0, \tau=0}  \pm \left.\left(\frac{1}{r} \frac{\partial}{\partial \tau^\beta} \Tilde{\pi}\left(P_g (r,\tau, 0)\right)\right)\right|_{r=0, \tau=0}  \left.\frac{\partial \tau^\beta}{ \partial r }\right|_{r=0}\\
        =& -\frac{|\mathbb{S}^n|}{2(n+1)(n+3) }  \,\mathrm{Sc}_{,l} e_l + \Tilde{\pi}\left( (H \pm P)_{r\varphi} (0,\tau,0 )\varphi_0 \right) \pm  \frac{\partial ^2}{\partial r \partial \tau^\beta} \Tilde{\pi}\left.\left(P (r,\tau, 0)\right)\right|_{r=0, \tau=0}  \cdot\left.\frac{\partial \tau^\beta}{ \partial r }\right|_{r=0}\\
        =&  -\frac{|\mathbb{S}^n|}{2(n+1)(n+3) }  \,\mathrm{Sc}_{,l} e_l + \Tilde{\pi}\left( (H \pm P)_{r\varphi} (0,\tau,0 )\varphi_0 \right) \pm \frac{|\mathbb{S}^n|}{(n+1) }  \partial_{\tau^\beta} \mathrm{A_{CE}}_l \, e_l \left.\frac{\partial \tau^\beta}{ \partial r }\right|_{r=0}
    \end{split}
    \end{equation}
    Consider the second term in the last line. Recall that $\varphi_0$ is determined by the equation  
    \begin{equation}
        \begin{split} 
        L\varphi_0 
        &=\pi^\bot \left( \frac{1}{3} Ric^0_{ij}(0) x^ix^j \mp  \frac{\partial}{\partial r} \left.\left(  P_g(r,\tau,0) \right)\right|_{r=0} \right)\
        \\
        &= \frac{1}{3} \mathrm{Ric}_{ij} x^ix^j \pm k_{ij,p} x^i x^jx^p \mp \frac{1}{n+3} (\partial_i \tr k + 2 k_{ir,r})x^i.
        \end{split}
    \end{equation}
    The solution to this equation in $K^\bot$ is given by
    \begin{equation}
    \label{varphi}
        \varphi_0 = \pm \frac{1}{2(n+3)}k_{ij,p} x^i x^j x^p + \frac{1}{3(n+2)} \mathrm{Ric}_{ij} x^i x^j \mp \frac{1}{2(n+3)^2} (\partial_i \tr k + 2 k_{ir,r})x^i  - \frac{2}{3(n+2) }    \mathrm{Sc}
    \end{equation}
    Using the linearization   $(P )_\varphi \varphi_0 =  \big( \left( \nabla_\nu \tr k - \nabla_\nu k(\nu, \nu)\right)\varphi_0 +2 k(\nabla \varphi_0, \nu) \big)$ and $k=0$ at $p$ we find after a long calculation (in which we also use that as $E=0$, $k_{lr,r} = \frac{n+2}{2(n+3)} \partial_l \tr k$) that
    \begin{equation}\label{proj0}
        \begin{split}
        \Tilde{\pi}\left( (H \pm P)_{r\varphi} (0,\tau,0 )\varphi_0 \right) =&  \pm\int_{\mathbb{S}^n}(  k_{ii,j} x^jx^l - k_{ij,p} x^i x^j x^p x^l ) \varphi_0  d\mu \, e_l \\
       =&\pm  \frac{|\mathbb{S}^n|}{3(n+1)(n+3)^2(n+5)}\Big( \frac{2(n^2 +6n +10)}{(n+3)} \mathrm{Ric}_{lr}\, \partial_r \tr k  - 4\mathrm{Ric}_{rs}\,  k_{ls,r}\\
        &-2 \mathrm{Ric}_{rs}\,  k_{rs,l}  - \frac{n^3 +14n^2 +52n +60}{n(n+3)} \mathrm{Sc}\, \partial_l  \tr k \Big)e_l.
        \end{split}
    \end{equation}  
    Inserting this into \eqref{cefoli}  we find 
    \begin{equation*}
        \begin{split}
        0
        &= -\frac{|\mathbb{S}^n|}{2(n+1)(n+3) }  \,\mathrm{Sc}_{,l} e_l \pm  \frac{|\mathbb{S}^n|}{3(n+1)(n+3)^2(n+5)}\Big( \frac{2(n^2 +6n +10)}{(n+3)} \mathrm{Ric}_{lr}\, \partial_r \tr k  \\
        &\qquad - 4\mathrm{Ric}_{rs}\,  k_{ls,r} -2 \mathrm{Ric}_{rs}\,  k_{rs,l}  - \frac{n^3 +14n^2 +52n +60}{n(n+3)} \mathrm{Sc}\, \partial_l  \tr k \Big) e_l  \pm \frac{|\mathbb{S}^n|}{(n+1) }  \partial_{\tau^\beta} E_l \, e_l\left.\frac{\partial \tau^\beta}{ \partial r }\right|_{r=0}\\
        &=
        \frac{|\mathbb{S}^n|}{(n+1)(n+3) }  (\mathrm{\hat{A}_{CE}}^{\pm})_l \, e_l   
        \pm \frac{|\mathbb{S}^n|}{(n+1) }  \partial_{\tau^\beta} (\mathrm{A_{CE}})_l \, e_l
        \left.\frac{\partial \tau^\beta}{ \partial r }\right|_{r=0}.
        \end{split}
    \end{equation*}
    Thus $\left.\frac{\partial \tau}{ \partial r }\right|_{r=0} =\frac{1}{(n+3)} (\nabla \mathrm{A_{CE}})^{-1} \,    \mathrm{\hat{A}_{CE}}^\pm $. Since by assumption $\left|\left.\frac{\partial \tau}{ \partial r }\right|_{r=0}\right| \leq \frac{1}{(n+3)} |(\nabla \mathrm{A_{CE}})^{-1}|\, |       \mathrm{\hat{A}_{CE}}^\pm| <1 $  we have a foliation.    

    The rest of the proof follows as in Theomrem \ref{priSTCMC}.
\end{proof}

\begin{remark}
  Note that in contrast to Theorem \ref{priSTCMC} in this Theorem we can not allow $k$ to be zero in a neighborhood of $p$ since then $\nabla \mathrm{A_{CE}}$ would not be invertible. Therefore it is not a generalization of the CMC foliation of \cite{Ye}. However, using different assumptions than Theorem~\ref{priCE}, the following result in fact generalizes the CMC foliation.
\end{remark}
\begin{theorem}
    \label{secCMC}
    Let  $p\in M$ be such  that at $p$, $\mathrm{A_{CE}}=  \mathrm{\hat{A}_{CE}}^\pm =0$, $k= \nabla \mathrm{A_{CE}}=0$, $ \Hess\mathrm{A_{CE}}=0 $ and $\nabla   \mathrm{\hat{A}_{CE}}^\pm  + \hat{T}$  is invertible. Then there exist a constant $C$ depending on the dimension of $M$ such that if at $p$, $ C |(\nabla   \mathrm{\hat{A}_{CE}}^\pm  + \hat{T})^{-1} |\left( |\nabla k|\,( |\mathrm{Ric}|+|\nabla k|  +|\nabla \nabla k|) +|\nabla \nabla \nabla k| \right)  <1  $ then there exist  $\delta^\pm>0$ and a smooth foliation $\mathcal{F}^\pm= \{ S_r^\pm : r\in (0, \delta^\pm)  \} $ of constant expansion  spheres with $H_{S_r^\pm}\pm P_{S_r^\pm}= \frac{n}{r}$. $\mathcal{F}^\pm$ is a foliation regularly centered at $p$ and each of its leaves $S_r^{\pm}$ is a normal graph over a geodesic sphere of radius $r$. 
\end{theorem}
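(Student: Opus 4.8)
The plan is to carry out the Lyapunov--Schmidt reduction of Theorems~\ref{priSTCMC} and~\ref{priCE}, but with the obstruction extracted one order later in $r$. Under the present hypotheses the ``first-order'' obstruction of Theorem~\ref{priCE} degenerates: since $k(p)=0$ and $\mathrm{A_{CE}},\nabla\mathrm{A_{CE}},\Hess\mathrm{A_{CE}}$ all vanish at $p$, the map $\tau\mapsto\mathrm{A_{CE}}(c(\tau))$ vanishes to third order at $\tau=0$, and moreover $\mathrm{\hat{A}_{CE}}^\pm(p)=0$. The kernel equation, after a suitable rescaling of the translation parameter, then becomes affine in that parameter with leading coefficient $\nabla\mathrm{\hat{A}_{CE}}^\pm+\hat T$, which is invertible by assumption, and this replaces $\nabla\mathrm{A_{CE}}$ in the role it played in Theorem~\ref{priCE}.

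Concretely, split $\mathcal{C}^{2,\frac12}(\IS^n)=K\oplus K^\bot$ with $K=\ker L$, expand $(H\pm P)(r,\tau,r^2\varphi)$ as in \eqref{genral2}, and take $\varphi_0\in K^\bot$ as in \eqref{varphi} (which simplifies here because $k(p)=0$ and, by $\mathrm{A_{CE}}(p)=0$, $k_{lr,r}=\tfrac{n+2}{2(n+3)}\partial_l\tr k$ at $p$). Write $\Tilde\pi\big((H\pm P)(r,\tau,r^2\varphi)-n\big)=a_2(\tau,\varphi)r^2+a_3(\tau,\varphi)r^3+a_4(\tau,\varphi)r^4+\mathcal{O}(r^5)$; the $r^0$- and $r^1$-coefficients vanish since $n$ is constant and $\Tilde\pi$ annihilates $\tr k-k_{ij}x^ix^j$. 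The computation already done for \eqref{cefoli}--\eqref{proj0} in the proof of Theorem~\ref{priCE} --- using Lemma~\ref{deriv}, the contracted Bianchi identity turning $\mathrm{Ric}_{ij;k}$ into $\mathrm{Sc}_{,l}$, and the explicit $\varphi_0$ to evaluate $\Tilde\pi\big((H\pm P)_{r\varphi}(0,\tau,0)\varphi_0\big)$ --- gives $a_2(\tau,\varphi)=\pm\tfrac{|\mathbb{S}^n|}{n+1}\mathrm{A_{CE}}(c(\tau))$ and $a_3(\tau,\varphi_0)=\pm\tfrac{|\mathbb{S}^n|}{(n+1)(n+3)}\big(\mathrm{\hat{A}_{CE}}^\pm(c(\tau))+\mathcal{O}(|k|)\big)$, the $\mathcal{O}(|k|)$ collecting the contributions dropped under $k(p)=0$. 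Now substitute $\tau=r\hat\tau$ and divide by $r^4$. The third-order vanishing of $\mathrm{A_{CE}}$ makes $a_2(r\hat\tau,\varphi)r^2=\mathcal{O}(r^5)$; moreover $a_3(r\hat\tau,\varphi_0)r^3=\pm\tfrac{|\mathbb{S}^n|}{(n+1)(n+3)}\big(\nabla\mathrm{\hat{A}_{CE}}^\pm+\hat T\big)(\hat\tau)\,r^4+\mathcal{O}(r^5)$, where $\hat T$ is by definition the $\tau$-gradient at $p$ of the dropped $\mathcal{O}(|k|)$-terms, hence quadratic in $\nabla k$; and $a_4(r\hat\tau,\varphi_0)r^4=a_4(p,\varphi_0)r^4+\mathcal{O}(r^5)$. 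Hence $r^{-4}\Tilde\pi\big((H\pm P)(r,r\hat\tau,r^2\varphi)-n\big)$ is regular at $r=0$, where it reads $\pm\tfrac{|\mathbb{S}^n|}{(n+1)(n+3)}\big(\nabla\mathrm{\hat{A}_{CE}}^\pm+\hat T\big)(\hat\tau)+a_4(p,\varphi_0)=0$; here $\nabla\mathrm{A_{CE}}(p)=0$ and $\Hess\mathrm{A_{CE}}(p)=0$ are used precisely to discard, respectively, the would-be linear and would-be quadratic $\hat\tau$-contributions coming from $a_2$, so that the equation is affine in $\hat\tau$ with invertible linear part. The implicit function theorem then gives $\hat\tau=\hat\tau(r,\varphi)$ solving the kernel equation near $(0,\varphi_0)$, hence $\tau(r,\varphi)=r\hat\tau(r,\varphi)$ with $\tau(0,\varphi_0)=0$ and $\partial_r\tau|_{r=0}=\hat\tau(0,\varphi_0)=\mp\tfrac{(n+1)(n+3)}{|\mathbb{S}^n|}\big(\nabla\mathrm{\hat{A}_{CE}}^\pm+\hat T\big)^{-1}a_4(p,\varphi_0)$.

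The $K^\bot$-equation is solved verbatim as in Theorem~\ref{priCE}: at $r=0$, $\varphi=\varphi_0$ it reduces to $L\varphi_0-\pi^\bot\big(\tfrac13\mathrm{Ric}^0_{ij}x^ix^j\mp\partial_r P_g|_{r=0}\big)=0$, true by \eqref{varphi}, and its $\varphi$-derivative is $L|_{K^\bot}$, invertible; so the implicit function theorem yields $\delta^\pm>0$ and $\varphi:[0,\delta^\pm)\to\mathcal{C}^{2,\frac12}(\IS^n)$ with $\varphi(0)=\varphi_0$ and $(H\pm P)(r,\tau(r),r^2\varphi(r))=n$ on $(0,\delta^\pm)$, i.e.\ constant-expansion spheres $S_r^\pm$. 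Since $a_4(p,\varphi_0)$ is a sum of contractions of $\nabla k$ with $\mathrm{Ric}$, $\nabla k$, $\nabla\nabla k$ and of $\nabla\nabla\nabla k$ (the pure-curvature term $\mathrm{Ric}_{ij;kl}x^ix^jx^kx^l$ has vanishing $\Tilde\pi$, and the $\nabla\nabla\nabla k$ term comes from $\partial_r^3 P_g|_{r=0}$), one obtains $\big|\partial_r\tau|_{r=0}\big|\le C\,|(\nabla\mathrm{\hat{A}_{CE}}^\pm+\hat T)^{-1}|\big(|\nabla k|(|\mathrm{Ric}|+|\nabla k|+|\nabla\nabla k|)+|\nabla\nabla\nabla k|\big)$; the smallness hypothesis then forces $\big|\partial_r\tau|_{r=0}\big|<1$, so the lapse $\alpha=1+\partial_r\tau^k|_{r=0}\langle e_k,\nu\rangle$ is positive and $\{S_r^\pm\}$ is a foliation. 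Finally, the centred reparametrization and the estimates showing $\mathcal{F}^\pm$ is regularly centred are identical to the last two steps of the proof of Theorem~\ref{priSTCMC}.

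The step I expect to be the main obstacle is the bookkeeping behind the identifications of $a_2$, $a_3$, $\hat T$ and $a_4$: one must organise the $r^3$- and $r^4$-order terms of the kernel projection and sort out which pieces come from the scalar curvature (via contracted Bianchi), from the second- and third-order Taylor coefficients of $k$ at the shifted centre $c(\tau)$ --- all of which re-express through $\mathrm{A_{CE}}$ and its derivatives and so vanish at $p$ under our hypotheses --- and from the quadratic coupling through $\varphi_0$, the last producing exactly the $2$-tensor $\hat T$ on differentiation in $\tau$. Pinning down the numerical coefficients in $\hat T$ and checking that $k(p)=\nabla\mathrm{A_{CE}}(p)=\Hess\mathrm{A_{CE}}(p)=0$ is just enough to eliminate every remaining lower-order term while leaving a remainder controlled by the stated quantity is the delicate computation; everything else transcribes directly from Theorems~\ref{priSTCMC} and~\ref{priCE}.
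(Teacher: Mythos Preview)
Your rescaling $\tau=r\hat\tau$ together with division of the kernel equation by $r^4$ is a genuinely different organization from the paper's. The paper divides by $r^3$ with $\tau$ unrescaled, applies the implicit function theorem to obtain $\tau(r,\varphi)$ with $\tau(0,\varphi_0)=0$, and only \emph{afterwards} differentiates the kernel identity in $r$ to compute $\partial_r\tau|_{r=0}$ and estimate the lapse. Your route tries to absorb that last computation into the IFT step itself, so that $\hat\tau(0,\varphi_0)=\partial_r\tau|_{r=0}$ comes out directly.

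There is, however, a real gap in your argument. The function
\[
F(r,\hat\tau,\varphi)=r^{-4}\,\Tilde\pi\big((H\pm P)(r,r\hat\tau,r^2\varphi)-n\big)
\]
is not defined at $r=0$ for $\varphi\neq\varphi_0$. Expanding in $r$ one finds the $r^3$--coefficient at $\tau=0$ to be
\[
a_3(0,\varphi)=-\tfrac{|\IS^n|}{2(n+1)(n+3)}\,\mathrm{Sc}_{,l}\,e_l+\Tilde\pi\big((H\pm P)_{r\varphi}(0,0,0)\varphi\big),
\]
and, since $k(p)=0$ kills only the $2k(\nabla\varphi,\nu)$ piece of the linearization, the second term equals $\pm\int_{\IS^n}(k_{ii,j}x^jx^l-k_{ij,p}x^ix^jx^px^l)\,\varphi\,d\mu\,e_l$, which depends linearly on $\varphi$ through $\nabla k(p)$. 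Hence $a_3(0,\varphi)=0$ only at $\varphi=\varphi_0$ (this is precisely the content of $\mathrm{\hat A_{CE}^\pm}(p)=0$), so your $F$ contains $r^{-1}a_3(0,\varphi)$ which blows up as $r\to 0$ when $\varphi\neq\varphi_0$. Consequently the implicit function theorem cannot be invoked to produce $\hat\tau=\hat\tau(r,\varphi)$ smooth in both variables, and the subsequent $K^\bot$--step has nothing to feed on.

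The paper sidesteps this by stopping at $r^{-3}$: then $a_3(\tau,\varphi)$ is the leading (finite) part, its value at $(0,\varphi_0)$ is $\mathrm{\hat A_{CE}^\pm}(p)=0$, and its $\tau$--gradient is computed in \eqref{proj2}--\eqref{23} to be $\tfrac{|\IS^n|}{(n+1)(n+3)}(\nabla\mathrm{\hat A_{CE}^\pm}+\hat T)$, which is the invertible map one needs. The only residual singular piece is $r^{-1}a_2(\tau)=\pm\tfrac{|\IS^n|}{n+1}\,r^{-1}\mathrm{A_{CE}}(c(\tau))$, and here the third--order vanishing of $\mathrm{A_{CE}}$ (i.e.\ the hypotheses $\mathrm{A_{CE}}=\nabla\mathrm{A_{CE}}=\Hess\mathrm{A_{CE}}=0$ at $p$) makes both this term and its $\tau$--derivative vanish along $\tau=0$, so the IFT data at $(0,0,\varphi_0)$ are unaffected. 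Your identification of what $\hat T$ is and your estimate of $a_4$ are correct in spirit; the issue is purely the extra power of $r$ you divided by.
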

\begin{proof}
We proceed as in the proof of Theorem \ref{priCE} but instead of dividing by $r^2$ when projecting to $K$ we divide by $r^3$. As in the proof of Theorem \ref{priCE} we consider $\varphi_0$ to be given by (\ref{varphi}), then using (\ref{proj0}) we find
 \begin{equation*}
     \begin{split}
        &\left.\Tilde{\pi}\left(\frac{(H \pm P)(r, \tau, r^2 \varphi )-n}{r^3}\right)\right|_{r=0, \tau=0, \varphi= \varphi_0}
        \\
        &\quad = -\frac{|\mathbb{S}^n|}{2(n+1)(n+3)} \partial_l \mathrm{Sc}  \;e_l  \pm  \Tilde{\pi}\left.\left(\frac{1}{r^2} P_g (r,\tau, 0)\right)\right|_{r=0,\tau=0} + \Tilde{\pi}\left( (H \pm P)_{r\varphi} (0,0,0 )\varphi_0 \right)\\
        &\quad =\frac{|\mathbb{S}^n|}{(n+1)(n+3) }   (\mathrm{\hat{A}_{CE}}^\pm)_l \, e_l  =0
     \end{split}
 \end{equation*}
 where we used that $\Tilde{\pi}\left.\left(\frac{1}{r^2} P_g (r,\tau, 0)\right)\right|_{r=0,\tau=0}=0$ since $E=0$. For the derivative we have 
 \begin{equation}\label{proj2}
    \begin{split}
        &\frac{\partial}{\partial \tau^\beta}\Tilde{\pi}\left.\left(\frac{(H \pm P)(r, \tau, r^2 \varphi )-n}{r^3}\right)\right|_{r=0, \tau=0, \varphi= \varphi_0}\\
        &\textcolor{white}{123456789101112131415161718}= -\frac{|\mathbb{S}^n|}{2(n+1)(n+3)} \partial_{\tau^\beta} \partial_l \mathrm{Sc}  \;e_l 
        \pm \underbrace{\frac{\partial}{\partial \tau^\beta}  \Tilde{\pi}\left.\left(\frac{1}{r^2} P_g (r,\tau, 0)\right)\right|_{r=0,\tau=0}}_{=0}
        \\
        &\textcolor{white}{12345678910111213141516171819}
        + \frac{\partial}{\partial \tau^\beta}  \Tilde{\pi}\left.\left( (H \pm P)_{r\varphi} (0,\tau,0 )\varphi_0 \right)\right|_{\tau=0, \varphi= \varphi_0}.
     \end{split}
 \end{equation}
 Consider the third term on the right hand side and compute
  \begin{equation}
  \label{23}
     \begin{split}
        &\frac{\partial}{\partial \tau^\beta}  \Tilde{\pi}\left.\left( (H \pm P)_{r\varphi} (0,\tau,0 )\varphi_0 \right)\right|_{ \tau=0, \varphi= \varphi_0} 
        \\
        &\quad= \pm \frac{\partial}{\partial \tau^\beta} \left.\left( \int_{\mathbb{S}^n}(  k^\tau_{ii,j} x^jx^l - k^\tau_{ij,p} x^i x^j x^p x^l ) \varphi  +2k^\tau_{ij} \nabla^{\mathbb{S}^n}_i \varphi x^j x^l  d\mu \right)\right|_{\tau=0, \varphi =\varphi_0}  e_l\\
        &\quad=  \pm  \left( \int_{\mathbb{S}^n}(  \partial_{\tau^\beta} k_{ii,j} x^jx^l - \partial_{\tau^\beta} k_{ij,p} x^i x^j x^p x^l ) \varphi_0
            +2\partial_{\tau^\beta}k_{ij} \nabla^{\mathbb{S}^n}_i \varphi_0 x^j x^l  d\mu \right) \, e_l\\ 
        &\quad=\pm  \Big( \int_{\mathbb{S}^n}(  \partial_{\tau^\beta} k_{ii,j} x^jx^l - \partial_{\tau^\beta}k_{ij,p} x^i x^j x^p x^l ) \varphi_0
        -2\partial_{\tau^\beta} k_{\alpha j}  \nabla_\alpha^{\mathbb{S}^n} ( x^j x^l) \varphi_0   d\mu \Big) \, e_l.
     \end{split}
 \end{equation}
The first two terms are computed in \eqref{proj0}. For the remaining term note that $\partial_{\tau^\beta} k_{\alpha j}  \nabla_\alpha^{\mathbb{S}^n} x^s= \partial_{\tau^\beta} k_{s j} - \partial_{\tau^\beta}k_{r j} x^s x^r$ (where we denote the tangential vectors by $\alpha$ i.e. $e_\alpha$ tangent to $\mathbb{S}^n$). Taking also into account\eqref{varphi} we have that \eqref{23} reduces to 
\begin{equation}
     \begin{aligned}
         \frac{|\mathbb{S}^n|}{(n+1)(n+3) } & \Bigg( \partial_{\tau^\beta} (   (\mathrm{\hat{A}_{CE}}^\pm)_l + \frac{1}{2}\partial_{\tau^\beta} \partial_l \mathrm{Sc}  ) 
         +\frac{2}{(n+1)(n+3)} (\partial_{\tau^\beta} \tr k\, \partial_l \tr k +2 \partial_{\tau^\beta} \tr k \, k_{lr,r} \\
         &\qquad \qquad +2 \partial_i \tr k\ \partial_{\tau^\beta} k_{li} 
         +4 \partial_{\tau^\beta} k_{ij}\, k_{il,j} + 4 \partial_{\tau^\beta} k_{lr}\, k_{rs,s} +2 \partial_{\tau^\beta}k_{ij }\, k_{ir,r})\\
         &-\frac{2}{(n+3)^2}( \partial_{\tau^\beta} \tr k\, \partial_l \tr k +2 \partial_{\tau^\beta} \tr k \, k_{lr,r} + 4 \partial_{\tau^\beta} k_{lr}\, k_{rs,s}
         +2 \partial_i \tr k\  \partial_{\tau^\beta}k_{li} ) \Bigg)e_l\\
         &\hspace{-10ex}= 
         \frac{|\mathbb{S}^n|}{(n+1)(n+3) } \Bigg( \partial_{\tau^\beta} (   (\mathrm{\hat{A}_{CE}}^\pm)_l +\frac{1}{2} \partial_{\tau^\beta}\partial_l \mathrm{Sc}  ) \,+ \frac{4}{(n+3)(n+5)} \big((2 \partial_{\tau^\beta} k_{ij} \, k_{il, j} + \partial_{\tau^\beta} k_{ij}\, k_{ij,l})
         \\ & \qquad\qquad
         -\frac{2n+5}{(n+3)^2} (\partial_{\tau^\beta} \tr k\, \partial_l \tr k + 2 \partial_i \tr k \,\partial_{\tau^\beta} k_{il} )  \big)   \bigg)e_l\\
         &\hspace{-10ex}=
         \frac{|\mathbb{S}^n|}{(n+1)(n+3) } \Big( \partial_{\tau^\beta} (   (\mathrm{\hat{A}_{CE}}^\pm)_l + \frac{1}{2} \partial_{\tau^\beta} \partial_l \mathrm{Sc}  ) + \hat{T}_{l\tau^\beta}   \Big)e_l.
     \end{aligned}
 \end{equation}
 Here we used at the end that since $\mathrm{A_{CE}}=0$ also $k_{lr,r} = \frac{n+2}{2(n+3)} \partial_l \tr k$. Substituting this back into \eqref{proj2} we obtain 
\begin{equation}
    \frac{\partial}{\partial \tau^\beta}\Tilde{\pi}\left.\left(\frac{(H \pm P)(r, \tau, r^2 \varphi )-n}{r^3}\right)\right|_{r=0, \tau=0, \varphi= \varphi_0}= \frac{|\mathbb{S}^n|}{(n+1)(n+3) } \big( \partial_{\tau^\beta}    (\mathrm{\hat{A}_{CE}}^\pm)_l  + \hat{T}_{l \tau^\beta}   \big)e_l.
\end{equation}
Since $(\mathrm{\hat{A}_{CE}}^\pm)_l  + \hat{T}_{l \tau^\beta}$ is invertible at $p$, we can apply the implicit function theorem to find $\tau =\tau(r, \varphi)$ with $\tau (0,\varphi_0)=0$ as in the previous proofs. The projection to $K^\bot$ is done in exactly the same way as in Theorem \ref{priCE} therefore obtaining the required $\tau(r)$ and $\varphi(r)$.
 
 To see that these surfaces form a foliation we use
  \begin{equation*}
    \begin{split}
    0
    &=
    \frac{\partial}{\partial r} \Tilde{\pi}\left.\left(\frac{(H \pm P)(r, \tau(r), r^2 \varphi(r) )-n}{r^3}\right)\right|_{r=0}
    \\ &=
    -\frac{|\mathbb{S}^n|}{2(n+1)(n+3) }  \,\partial_{\tau^\beta} \partial_l\mathrm{Sc} \left.\frac{\partial \tau^\beta}{ \partial r }\right|_{r=0}  e_l +\frac{\partial}{\partial \tau^\beta}  \Tilde{\pi}\left.\left( (H \pm P)_{r\varphi} (0,\tau,0 )\varphi_0 \right)\right|_{\tau=0} \left.\frac{\partial \tau^\beta}{\partial r}\right|_{r=0} 
    \\ &\quad
    \pm \frac{\partial}{\partial r} \left.\left( \frac{1}{r^2} \Tilde{\pi}\left(P_g (r,\tau, 0)\right) \right)\right|_{r=0} 
    + \frac{1}{2} \Tilde{\pi}\left.\left( (H \pm P)_{\varphi rr} (0,\tau,0 )\varphi_0 \right)\right|_{\tau=0}
    \\ &= 
    \frac{|\mathbb{S}^n|}{(n+1)(n+3) } \big( \partial_{\tau^\beta}    (\mathrm{\hat{A}_{CE}}^\pm)_l  + \hat{T}_{l\tau^\beta}   \big)
    \left.\frac{\partial \tau^\beta}{ \partial r }\right|_{r=0}  e_l  \pm \frac{\partial}{\partial r} \left.\left( \frac{1}{r^2} \Tilde{\pi}\left(P_g (r,\tau, 0)\right) \right)\right|_{r=0}
    \\ &\quad
    + \frac{1}{2} \Tilde{\pi}\left.\left( (H \pm P)_{\varphi rr} (0,\tau,0 )\varphi_0 \right)\right|_{\tau=0}.
    \end{split}
\end{equation*}
This gives
\begin{equation*}
    \begin{split}
        \left.\frac{\partial \tau}{ \partial r }\right|_{r=0}  
        &=
        \frac{(n+1)(n+3) }{|\mathbb{S}^n|}(\nabla   \mathrm{\hat{A}_{CE}}^\pm  + \hat{T})^{-1}
        \Bigg(  \frac{1}{2} \Tilde{\pi}\left.\left( (H \pm P)_{\varphi rr} (0,\tau,0 )\varphi_0 \right)\right|_{\tau=0}\\
        &\quad \pm \frac{\partial}{\partial r} \left.\left( \frac{1}{r^2} \Tilde{\pi}\left(P_g (r,\tau, 0)\right) \right)\right|_{r=0} \Bigg).
    \end{split}
\end{equation*}
Note that by Lemma \ref{1.3} $ (H )_{\varphi rr} (0,\tau,0 )\varphi$ is an even operator and that the odd part of $\varphi_0$ is $\varphi_0^{odd}= \pm \frac{1}{2(n+3)}k_{ij,p} x^i x^j x^p  \mp \frac{1}{2(n+3)^2} (\partial_i \tr k + 2 k_{ir,r})x^i$. Thus we can estimate    
\begin{equation*}
    \big|\Tilde{\pi}\left.\left( (H \pm P )_{\varphi rr} (0,\tau,0 )\varphi_0 \right)\right|_{\tau=0}\big| 
    \leq C |\nabla k|\,( |\mathrm{Ric}|+|\nabla k|  +|\nabla \nabla k|).
\end{equation*}
Here $C$ is a constant depending only on $n$. Since $\Hess\mathrm{A_{CE}}=0$ it follows that the term $\frac{\partial}{\partial r} \left.\left( \frac{1}{r^2} \Tilde{\pi}\left(P_g (r,\tau, 0)\right) \right)\right|_{r=0}$ is given by a linear combination of contractions of $\nabla \nabla \nabla k$. Therefore this term can be estimated by a constant depending on $n$ times $|\nabla \nabla \nabla k|$. It follows that 
\begin{equation*}
    \left|\left.\frac{\partial \tau}{ \partial r }\right|_{r=0} \right|  
    \leq 
    C |(\nabla   \mathrm{\hat{A}_{CE}}^\pm  + \hat{T})^{-1} |\,\left( |\nabla k|\,( |\mathrm{Ric}|+|\nabla k|  +|\nabla \nabla k|) +|\nabla \nabla \nabla k| \right).
\end{equation*}
By assumption the right hand side is less than $1$ so that we have a foliation. The rest of the proof follows as in Theorem~\ref{priSTCMC}. 
\end{proof}
\begin{remark}\ 
\begin{enum}
\item As for the STCMC case, the assumption $\frac{1}{(n+3)} |(\nabla \mathrm{A_{CE}})^{-1}|\, |   \mathrm{\hat{A}_{CE}}^\pm| <1 $ for the foliation
 in Theorem \ref{priCE} and the assumption $C |(\nabla   \mathrm{\hat{A}_{CE}}^\pm  + \hat{T})^{-1}| \left( |\nabla k|\,( |\mathrm{Ric}|+|\nabla k|  +|\nabla \nabla k|) +|\nabla \nabla \nabla k| \right) <1  $ in Theorem~\ref{secCMC} are sufficient but not necessary to have the foliation. The necessary condition is that $\alpha= 1 +  \frac{\partial \tau^k}{\partial r}_{|r=0} \langle   e_k, \nu \rangle >0$. If these conditions are not fulfilled, then the proofs of the two theorems still give a regularly centered concentration of constant expansion surfaces surfaces around $p$. 
 \item We have constructed foliations of constant expansion surfaces in two different ways. Note that if the assumptions for one of the Theorems~\ref{priCE} and~\ref{secCMC} are satisfied, the other one is not. The first one requires $\nabla \mathrm{A_{CE}}$ to be invertible and the second one requires $\nabla \mathrm{A_{CE}}=0$.
 \end{enum}
\end{remark}

\section{Uniqueness and nonexistence}
This section closely follows \cite[Section 2]{Ye} with minor modifications. First, we adapt \cite[Lemma 2.1]{Ye} to our situation:

\begin{lemma}\label{lemma 2.1}
Let $\mathcal{F}$ be  a concentration of  regularly centered surfaces at $p\in M$, whose leaves are  constant expansion surfaces  or $STCMC$ surfaces, then it holds:  
\begin{enum}
    \item \label{lemma2.1b} There is a neighborhood $\Omega$ of $p$ together with a  constant $C>1$ such that the mean curvature of $S$ satisfies the bound $C^{-1}(\diam S)^{-1} < H(S)< C(\diam S)^{-1} $ provided that $S$ is a leaf of $\mathcal{F}$ and $S \subset \Omega$. 
    \item \label{lemma2.1a} $\diam S \to  0$ as $\text{dist}(p , S) \to  0$ for leaves $S$ of $\mathcal{F}$.
    \item \label{lemma2.1d} The leaves of $\mathcal{F}$ can be parameterized as a smooth family $S_t$, $0<t\leq 1$ with $S_t \neq S_{t'}$ if $t \neq t'$ and $\lim_{t \to  0} \diam S_t \rightarrow 0$.
\end{enum}
\end{lemma}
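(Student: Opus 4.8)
The plan is to follow the proof of \cite[Lemma 2.1]{Ye} closely; the one new feature is the term $P$ in the prescribed curvature equation, which is harmless since $k$ is a fixed smooth tensor, so on a relatively compact neighborhood $\Omega$ of $p$ one has $|P|\le C_0$ with $C_0$ depending only on $n$ and $\sup_\Omega|k|$, uniformly over all leaves $S\subset\Omega$. Write $\Lambda<\infty$ for the supremum in \eqref{regular}.

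\emph{Part~(\ref{lemma2.1b}).} The upper bound is immediate: in a $g_S$-orthonormal frame $H^2=(\tr B_S)^2\le n|B_S|^2$, so $|H|\le\sqrt n\,|B_S|\le\sqrt n\,\Lambda(\diam S)^{-1}$ pointwise on $S$. For the lower bound, shrink $\Omega$ so that in $g$-normal coordinates around $p$ the metric is comparable to the Euclidean one. Let $B_R(q)\supseteq S$ be the smallest circumscribed ball of a leaf $S\subset\Omega$; then $R\le c_1\diam S$, and $\partial B_R(q)$ touches $S$ from outside at a point $x_0$, so a graph comparison of second fundamental forms (as in \cite{Ye}) gives $H(x_0)\ge n/R\ge c_2(\diam S)^{-1}$ for the orientation of $S$ whose enclosed region lies inside $B_R(q)$ — which, once $\diam S$ is small, is the orientation in which the defining equation holds. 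That equation then forces the constant $c:=H\pm P$ on $S$ (resp.\ $c^2:=H^2-P^2$) to satisfy $c\ge H(x_0)-C_0\ge\tfrac12 c_2(\diam S)^{-1}$ (resp.\ $c^2\ge H(x_0)^2-C_0^2\ge\tfrac12H(x_0)^2$); in particular $c>0$. Since $H=c\mp P$ (resp.\ $H=\sqrt{c^2+P^2}$) and $|P|\le C_0$, this propagates to $H(x)>C^{-1}(\diam S)^{-1}>0$ at every $x\in S$, which together with the upper bound proves~(\ref{lemma2.1b}).

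\emph{Parts~(\ref{lemma2.1a}) and~(\ref{lemma2.1d}).} With~(\ref{lemma2.1b}) available, these follow as in \cite[Lemma 2.1]{Ye}. For~(\ref{lemma2.1a}) one argues by contradiction: if there were leaves $S_j$ with $\operatorname{dist}(p,S_j)\to0$ but $\diam S_j\ge\eps_0>0$, then by \eqref{regular} the $S_j$ have uniformly bounded second fundamental form, so a blow-up of $M$ at $p$ extracts a limit hypersurface through the limit of the nearest points, and in the blow-down $P$ contributes only a uniformly bounded term; Ye's comparison/compactness argument, undisturbed by this negligible perturbation, then contradicts the concentration property $\bigcap_{r_0}\overline{\bigcup_{r\in I\cap(0,r_0)}S_r}=\{p\}$. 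For~(\ref{lemma2.1d}), part~(\ref{lemma2.1a}) shows that leaves meeting $B_\delta(p)$ lie inside $B_{\eps(\delta)}(p)$ with $\eps(\delta)\to0$ as $\delta\to0$, and the reparameterization argument of \cite{Ye} — using that the leaves solve a smooth prescribed-curvature equation and that two small leaves of a concentration centered at $p$ cannot coincide — produces the smooth family $S_t$ with $\diam S_t\to0$ and $S_t\ne S_{t'}$ for $t\ne t'$. The one step needing care beyond bookkeeping is the blow-up argument for~(\ref{lemma2.1a}), where one must confirm that the presence of $P$ does not alter the limiting geometry; the remainder transcribes from \cite{Ye}.
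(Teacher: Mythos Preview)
Your proof is correct and follows essentially the same route as the paper: the upper bound comes directly from the regular-centering condition, the lower bound is obtained by comparing $S$ with the boundary of its smallest enclosing ball (the paper phrases this as the strong maximum principle against the enclosing geodesic sphere, you phrase it as a graph comparison of second fundamental forms, but the content is identical), and then the constancy of $H\pm P$ resp.\ $H^2-P^2$ together with the local bound $|P|\le C_0$ propagates the pointwise lower bound to all of $S$. For parts~(\ref{lemma2.1a}) and~(\ref{lemma2.1d}) the paper, like you, simply defers to \cite{Ye}.
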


\begin{proof}\ 
    \begin{enum}
    \item The upper bound follows from Definition~\ref{regcent}. For the lower bound we use the strong maximum principle for quasi-linear elliptic equations of second order in the following way: Let $\Sigma_1$ and $\Sigma_2$ be two closed oriented surfaces that touch in one point such that at the touching point the normals point into the same direction with $\Sigma_2$ enclosing $\Sigma_1$. Then at the touching point they must satisfy $H(\Sigma_1) \geq H(\Sigma_2)$.

    Let $\diam S$ be the extrinsic geodesic diameter, that is the diameter of the smallest geodesic sphere containing $S$, $S_\rho$, (in particular $\diam S =\diam S_\rho= 2 \rho $).  Then $S_\rho$ touches $S$ in at least one point $q$ with the normals pointing in the same direction. As $S_\rho$ is a geodesic sphere $H(S_\rho) = \frac{n}{\rho} + \mathcal{O} (\rho)$. By choosing $\Omega$ sufficiently small, so that $\rho$ is small, we can ensure $H(S_\rho) >\frac{n-1}{\rho}$. By the maximum principle $H(S)\geq H(S_\rho)>\frac{n-1}{\rho}$ at $q$. Let $\Tilde{q}\in S$ be a different point. For the CE case it follows that 
    \begin{equation}
        H(\Tilde{q}) \geq H(\Tilde{q}) -H(q) + \frac{n-1}{\rho}= \pm (P(q)-P(\Tilde{q})) + \frac{n-1}{\rho}.
    \end{equation}
    Since $P$ is bounded in a neighborhood of $p$ by taking $\Omega $ smaller if necessary we have $\pm (P(\Tilde{q})-P(q)) + \frac{n-1}{\rho} \geq \frac{n-1}{2\rho} $ and therefore $$H(q) \geq \frac{n-1}{2\rho} = \frac{n-1}{\diam S}$$  for any $ \Tilde{q} \in S$. The lower bound for the STCMC case is obtained in an equivalent way.
    \item-- \ref{lemma2.1d}) The rest of the proof is just like in \cite{Ye}.
    \end{enum}
\end{proof}

The next statement is similar to~\cite[Lemma 2.2]{Ye}. The proof in this situation needs a slight modification.

\begin{lemma}
    \label{inesta}
    Let $\mathcal{F}$ be a concentration of surfaces regularly centered at $p\in M$, whose leaves are $STCMC$ surfaces or constant expansion surfaces. For $t$ sufficiently small, $S_t$ is unstable under general variations, i.e. the first eigenvalue of the Jacobi operator $-\Delta-|B|^2 -\mathrm{Ric}(\nu,\nu)$ on $S_t$ is negative. In particular $(H^2-P^2)'(t) \neq 0 $ for STCMC surfaces and $(H\pm P)'(t) \neq 0 $ for constant expansion surfaces.
\end{lemma}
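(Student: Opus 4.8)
The plan is to test the second variation (Jacobi) form of $S_t$ with the constant function, and then feed the resulting instability into the first variation identity for the family $\{S_t\}$. First I record the linearizations. For a normal speed $u$, the linearization of the mean curvature is the Jacobi operator, $\delta H[u]=\Delta u+(|B|^2+\mathrm{Ric}(\nu,\nu))u$, and, by the formula for $(P)_\varphi$ already used in the construction, $\delta P[u]=(\nabla_\nu\tr k-\nabla_\nu k(\nu,\nu))u+2k(\nabla u,\nu)$. Hence $\delta(H\pm P)$ has the form $\mathcal{L}^\pm=\Delta+W\cdot\nabla+V$ with $|W|\le 2|k|$ bounded and $V=|B|^2+\mathrm{Ric}(\nu,\nu)\pm(\nabla_\nu\tr k-\nabla_\nu k(\nu,\nu))$, while $\delta(H^2-P^2)[u]=2H\,\delta H[u]-2P\,\delta P[u]$; since $H>0$ and $H\ge C^{-1}(\diam S)^{-1}$ on a small neighbourhood $\Omega$ of $p$ (the mean curvature bound of Lemma~\ref{lemma 2.1}), dividing by $2H$ puts $\mathcal{L}^{\mathrm{ST}}:=(2H)^{-1}\delta(H^2-P^2)$ into the same shape, now with $|W|=O(\diam S)$ and $V=|B|^2+\mathrm{Ric}(\nu,\nu)+O(\diam S)$.

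\emph{Step 1 (instability).} Since $H=\tr B$, Cauchy--Schwarz gives $|B|^2\ge H^2/n$ pointwise on $S_t$, and by Lemma~\ref{lemma 2.1}, $H\ge C^{-1}(\diam S_t)^{-1}$ once $t$ is small enough that $S_t\subset\Omega$. As $\mathrm{Ric}$, $k$ and $\nabla k$ are bounded on $\Omega$ and $\diam S_t\to0$ (Lemma~\ref{lemma 2.1}), the potential $V$ of each of $\mathcal{L}^\pm$ and $\mathcal{L}^{\mathrm{ST}}$ satisfies $\inf_{S_t}V\ge m_t$ with $m_t\to+\infty$. Testing the Jacobi form with $\phi\equiv1$ already gives $\int_{S_t}\bigl(|\nabla\phi|^2-(|B|^2+\mathrm{Ric}(\nu,\nu))\phi^2\bigr)=-\int_{S_t}(|B|^2+\mathrm{Ric}(\nu,\nu))<0$ for $t$ small, which is the asserted instability. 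More precisely, if $\mathcal{L}$ denotes any of $\mathcal{L}^\pm$, $\mathcal{L}^{\mathrm{ST}}$, or the Jacobi operator itself, let $\phi_1>0$ be its principal eigenfunction (Krein--Rutman) with eigenvalue $\mu_1(\mathcal{L})$; at an interior minimum point of $\phi_1$ we have $\nabla\phi_1=0$ and $\Delta\phi_1\ge0$, so there $\mu_1(\mathcal{L})\phi_1=\mathcal{L}\phi_1\ge V\phi_1\ge m_t\phi_1$, whence $\mu_1(\mathcal{L})\ge m_t>0$ for $t$ small.

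\emph{Step 2 (nonvanishing of the derivative).} The leaves are STCMC (resp. constant expansion) surfaces, so $(H^2-P^2)|_{S_t}\equiv c(t)$ (resp. $(H\pm P)|_{S_t}\equiv c(t)$). Parameterise $\{S_t\}$ smoothly as in Lemma~\ref{lemma 2.1} and let $u_t$ be the associated lapse; since the distinct leaves of a foliation --- and, after passing to a nested subfamily, of a concentration --- do not cross, $u_t$ keeps a fixed sign, say $u_t>0$, for $t$ small. Differentiating in $t$ and using that $c(t)$ is constant along $S_t$ gives $\mathcal{L}_t u_t=c'(t)$, where $\mathcal{L}_t$ is $\mathcal{L}^{\mathrm{ST}}$ (resp. $\mathcal{L}^\pm$) after dividing by the nowhere-vanishing $2H$. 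If $c'(t_0)=0$ for some small $t_0$, then $u_{t_0}>0$ solves $\mathcal{L}_{t_0}u_{t_0}=0$, so by the maximum principle $u_{t_0}$ is the principal eigenfunction and $\mu_1(\mathcal{L}_{t_0})=0$, contradicting Step~1. Hence $c'(t)\ne0$ for all small $t$, as claimed.

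The crux is the quantitative claim in Step~1: one must verify that the lower order contributions from $P$ --- the bounded potential term $O(|\nabla k|)$ and, more delicately, the genuine first order term $2k(\nabla\cdot,\nu)$ --- are subordinate to $|B|^2\sim(\diam S_t)^{-2}$ as $t\to0$, and the minimum point argument is the cleanest way to treat the resulting non-self-adjoint operator without a Rayleigh quotient. A secondary point is the positivity of the lapse $u_t$, which is automatic for foliations and, for a concentration, is arranged by passing to a nested subfamily.
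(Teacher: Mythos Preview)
Your argument is correct and takes a more elementary route than the paper's. The paper rescales each $S_t$ by its diameter, uses the regular--centering bound together with Lemma~\ref{lemma 2.1} and Schauder estimates to extract a $C^{2,\alpha}$--convergent subsequence, rules out multiple sheeting by the maximum principle, and invokes Alexandrov's theorem to conclude that the rescaled surfaces converge to a round Euclidean sphere; instability of $S_t$ is then inherited from the round sphere. You sidestep this compactness machinery entirely by observing that the pointwise bound $H\ge C^{-1}(\diam S_t)^{-1}$ from Lemma~\ref{lemma 2.1} already forces $|B|^2\ge H^2/n$ to swamp every lower--order term in the potential, and the minimum--point argument for the Krein--Rutman eigenfunction handles the non--self--adjoint drift $2k(\nabla\cdot,\nu)$ cleanly. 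This is shorter and avoids Alexandrov. The trade--off is that the paper's compactness argument does double duty: the convergence of the rescaled leaves to round spheres is reused immediately after the lemma (via Ye's Corollary~2.1 and Lemma~2.3) to write each leaf as a small $C^3$--normal graph over a geodesic sphere, which is the input for the uniqueness proofs in Theorems~\ref{unicSTCMC} and~\ref{unicCE}. Your proof establishes the lemma as stated but does not deliver that graph representation, so it cannot simply replace the paper's argument in context.

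One soft spot you already flag: positivity of the lapse $u_t$ is automatic for a foliation, but your ``pass to a nested subfamily'' for a general concentration is not justified --- leaves of a concentration are allowed to intersect, and even a nested subsequence need not carry a smooth parametrisation with one--signed lapse. The paper is equally terse at the analogous step (``instability directly implies that any normal variation has $H'\ne0$''), so this is a shared loose end rather than a defect peculiar to your approach.
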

\begin{proof}
    Consider an arbitrary sequence  $t_j \rightarrow 0$ and denote $S_{t_j}$ by $S_j$ and $\diam S_j$ by $d(S_j)$. Consider the auxiliary manifold $(\mathbb{B}_{2r_p}, g_{0,d(S_j)},k_{0, d(S_j)})$ with
    \begin{equation*} 
        g_{0,d(S_j)}= d(S_j)^{-2} \alpha_{d(S_j)}^*(F_0^*(g))
        \qtext{and}
        k_{0, d(S_j)}
        = d(S_j)^{-1} \alpha_{d(S_j)}^*(F_\tau^*(k)) ).
    \end{equation*}
    Let $\Tilde{S_j}=\alpha_{d(S_j)^{-1}} ( F_0^{-1} ( (S_j)))$. Then the diameter of $\Tilde{S_j}$ with respect to the rescaled metric is $1$ and $k_{0, d(S_j)} \rightarrow 0 $ as $j \to \infty $. Since our foliation is regularly centered, the second fundamental forms of $\Tilde{S_j}$ are bounded uniformly in $j\in\IN$. Therefore we can express $\Tilde{S_j}$ locally as the graph of a  function $u_j$ over its tangent space. Note that when studying two points close to each other this functions must agree where they overlap. By Lemma~\ref{lemma 2.1} we have a bound on $|H(\Tilde{S_j})|$ and then by the ellipticity of the mean curvature equation  we obtain Schauder estimates on $u_j$, 
    \begin{equation*}
        \|u_j \|_{\mathcal{C}^{2,\frac{1}{2}}} 
        \leq C  \big(\|u_j \|_{\mathcal{C}^{0}} 
            + \|H(\text{graph} \, u_{j}) \|_{\mathcal{C}^{0,\frac{1}{2}}} \big).
    \end{equation*}
    The bound on the diameter gives a uniform bound on $\|u_j \|_{\mathcal{C}^{0}}$. By the CE or STCMC equation, $\|\nabla H(\text{graph} \, u_{j}) \|_{\mathcal{C}^{0}}$ is bounded by $\|\nabla P(\text{graph} \, u_{j}) \|_{\mathcal{C}^{0}} $ and this is uniformly bounded in $j$ since  $|k|$ and $|\nabla k|$ are bounded in a neighbourhood of $p$ and since the second fundamental forms of $\text{graph} \, u_{j}$ are uniformly bounded. 
    Thus, for a positive constant $C$, it follows that  $\|H(\text{graph} \, u_{j}) \|_{\mathcal{C}^{0,\frac{1}{2}}}< C $ for all $j\in \IN$ and by Arzela Ascoli $(u_j)$ has a convergent sub-sequence so that local pieces of $\Tilde{S_j}$ sub-converge smoothly. 
    
    From the scaling of $k_{0, d(S_j)}$ it follows that $P(\Tilde{S_j}) \to 0$ as $j \to \infty$. Then as $(H^2-P^2)(\Tilde{S_j})=C_j $ or  $(H\pm P)(\Tilde{S_j})=C_j $ where $C_j$ is a constant, the bounds of Lemma~\ref{lemma 2.1}\, (\ref{lemma2.1b}) imply that the sequence $(C_j)_{j\in\IN}$ is bounded and bounded away from $0$. Thus by selecting a further sub-sequence we can assume that $\lim_{j\to\infty} C_j=C$ exists and is non-zero. Thus $\lim_{j\to \infty} (H^2-P^2)(\Tilde{S_j}) = \lim_{j\to \infty} H(\Tilde{S_j})^2=C$ for the STCMC case or $\lim_{j\to \infty} (H\pm P)(\Tilde{S_j}) = \lim_{j\to \infty} H(\Tilde{S_j})=C$ for the constant expansion case.

    If we consider such a convergent sub-sequence  $u_{j'} $ and some $p_{j'} \in  \Tilde{S_{j'}}$ such that each $\text{graph} \, u_{j'} $ is a neighbourhood of $p_{j'}$ we  have that $\lim_{j' \to \infty} H( \text{graph}(u_{j'}))=\lim_{j'\to \infty} C_{j'} = C$, so that the graphs of the functions $u_{j'}$ converge to pieces of CMC surfaces in Euclidean space. 
    
    Assume that one such local sheet $\Sigma_1$ passes through  $p\in\R^{n+1}$ and by eventually selecting a further sub-sequence on can construct a different sheet $\Sigma_2$ also passing through $p$. Then since the $\tilde S_j$ are embedded, $\Sigma_1$ and $\Sigma_2$ must
    be ordered near $p$ in the sense that $\Sigma_1$ is locally on one side of $\Sigma_2$. Such a situation is possible. However, if one can construct three sheets $\Sigma_1$, $\Sigma_2$ and $\Sigma_3$ meeting at $p$ in a locally ordered fashion, then it is also possible to construct three sheets such that $\Sigma_2$ is between $\Sigma_1$ and $\Sigma_3$, such that the outward normal of $\Sigma_1$ and $\Sigma_3$ agree at $p$ an such that the outward normal of $\Sigma_2$ points into the opposite direction at $p$ than the outward normals of $\Sigma_1$ and $\Sigma_2$ (cf. figure~\ref{fig:graphs}). This situation is incompatible with the strong maximum principle and the fact that the $\Sigma_i$ have the same nonzero constant mean curvature (Lemma \ref{lemma 2.1}). First, the strong maximum principle implies that $\Sigma_1$ and $\Sigma_2$ agree in a neighborhood of $p$ and thus also the middle sheet $\Sigma_2$ must agree with $\Sigma_1$. Since both surfaces have different orientations, this would mean that they have constant mean curvature zero, which is a contradiction. 
\begin{figure}[h]
    \label{fig:graphs}
    \centering
    \resizebox{.3\linewidth}{!}{
    \begin{picture}(0,0)%
    \includegraphics[width=200pt]{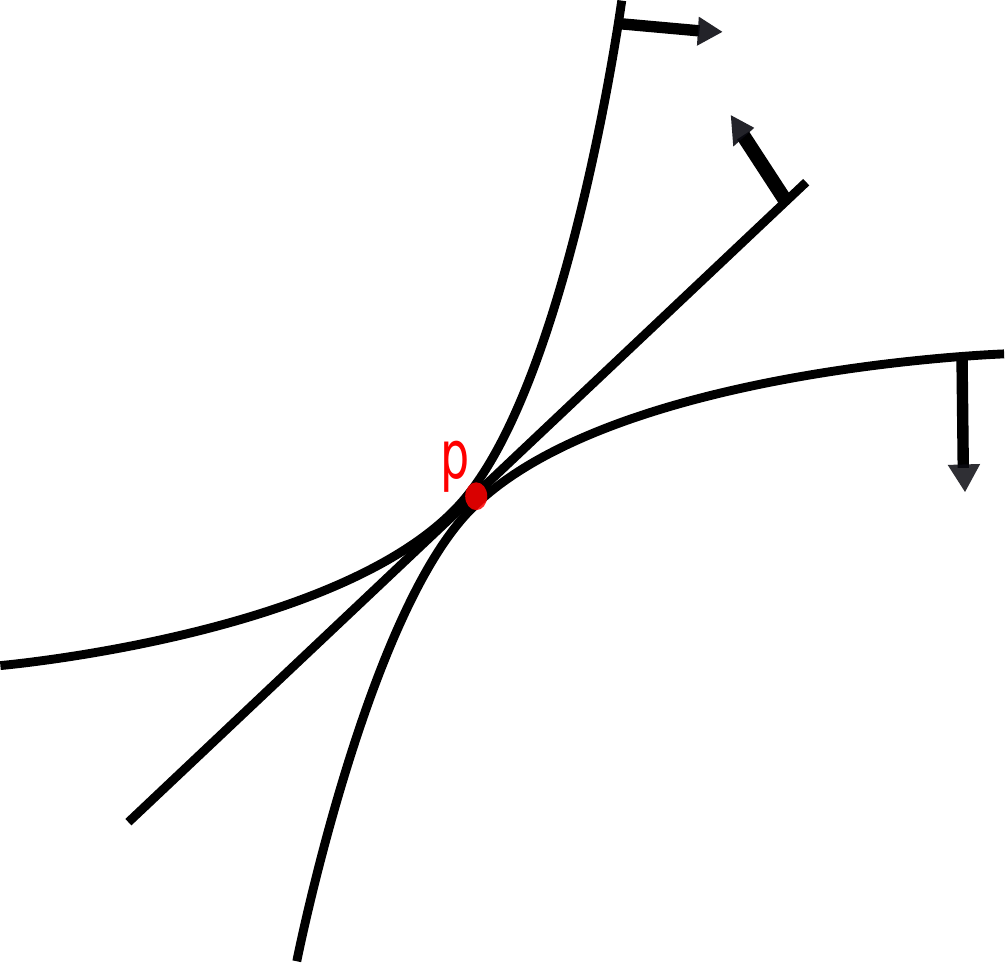}%
    \end{picture}%
    \setlength{\unitlength}{1pt}%
    \begin{picture}(200,200)(0,0)
    \put(20,67){\makebox(0,0)[lb]{\color[rgb]{0,0,0}$\Sigma_1$}}%
    \put(22,39){\makebox(0,0)[lb]{\color[rgb]{0,0,0}$\Sigma_2$}}%
    \put(68,8){\makebox(0,0)[lb]{\color[rgb]{0,0,0}$\Sigma_3$}}%
    \end{picture}%
}
\caption{Three pieces $\Sigma_1$, $\Sigma_2$, and $\Sigma_3$ meeting at $p$ with alternating orientations.}
\end{figure}
   
    Consequently, there can be no more than two sheets passing through a single point $p\in\R^{n+1}$. This means that the $\tilde S_j$ subconverge to an Alexandrov-embedded hypersurface in $\R^{n+1}$ with positive constant mean curvature. By Alexandrov's Theorem the limit must be a round sphere. Round spheres are unstable in the Euclidean metric (when allowing variations that don't preserve volume) and therefore for $j$ large enough $\Tilde{S_j}$ is unstable.

    The instability of the surfaces directly implies that any normal variation of $\Tilde{S_j}$ satisfies $H'(\Tilde{S_j})\neq 0 $, now as $P(\Tilde{S_j})= d(S_j) P_g(\Tilde{S_j}) \to 0$ for $j\to \infty$ we also have that  the normal variation $P'(\Tilde{S_j})$ is small for large $j$, and therefore    $ (H^2-P^2)'(\Tilde{S_j}) \neq 0 $ for STCMC case or $ (H\pm P)'(\Tilde{S_j}) \neq 0  $ for the constant expansion case when considering  $j$ large. Then as this applies for any sequence $t_j$ and $g_{0,d(S_j)} $ is a conformal metric we have that  $(H^2-P^2)'(t) \neq 0 $ or $(H\pm P)'(t) \neq 0 $ for $t$ sufficiently small.
\end{proof}
\begin{remark}
 The stability used in this last lemma is not the standard stability under volume preserving variations, but a general one allowing changes in the volume, under such unconstrained variations round spheres are unstable CMC surfaces in Euclidean space.   
\end{remark}

By the previous result, we have that $(H\pm P)(t)  $ and $(H^2 - P^2)(t)$ increase as $t$ decreases. Furthermore, since $P$ is bounded in a neighborhood of $p$,  if there is some $t_0$ such that $(H\pm P)(t_0)<0  $ or $(H^2 - P^2)(t_0)<0  $  then there exists $t_+<t_0$ such that $(H\pm P)(t)>0$ or $(H^2 - P^2)(t_0)>0$  $\forall t<t_+$. Therefore, we can always assume that these surfaces have  positive expansion, or positive space-time mean curvature and we can introduce the parameter $$\frac{n}{(H\pm P)(t)}=r= \frac{n}{\sqrt{(H^2-P^2)(t)}} .$$

A direct application of \cite[Corollary 2.1 and Lemma 2.3]{Ye}  with practically no change  gives a parametrization of the  $S_r$ (the leaves of a possible restriction of $\mathcal{F}$) given by graphs over geodesic spheres with perturbed centers, that is  $F_{\tau(r)} (\alpha_r (S^n_{\varphi^*(r)}))$ where    $\varphi$ is a function on the unit sphere $\mathbb{S}^n$ with the properties that $\pi(\varphi^*(r))=0$ and $\lim_{r \to 0} \|\varphi^*(r)\|_{\mathcal{C}^3}=0 $, and $\tau(r) \in \mathbb{R}^{n+1}$ satisfies  $\tau(0)=0$.
\begin{theorem}[Nonexistence and uniqueness, STCMC]\label{unicSTCMC}\ 
\begin{enum} 
\item \label{unicSTCMC-a} If at a point $p$, $\mathrm{A_{ST}} \neq 0$, then there exists no constant STCMC foliation regularly centered at $p$.

\item \label{unicSTCMC-b} Assume that at $p$ $\mathrm{A_{ST}}=0$, $\nabla \mathrm{A_{ST}}$ is invertible and that the foliation $ \mathcal{F}$ of Theorem \ref{priSTCMC}  exists. If  $ \mathcal{F}_2$ is a STCMC foliation regularly centered at $p$, then either $\mathcal{F}$ is a restriction of $\mathcal{F}_2$ or $\mathcal{F}_2$ is a restriction of $\mathcal{F}$.

\item \label{unicSTCMC-c} The claims (\ref{unicSTCMC-a}) and (\ref{unicSTCMC-b}) also hold if instead of  foliations we consider concentrations of regularly centered surfaces around $p$.
 \end{enum}
\end{theorem}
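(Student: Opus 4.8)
The plan is to follow \cite[Section~2]{Ye} and reduce the whole statement to the behaviour of the kernel projection of the STCMC operator along the leaves, written in the normalized parametrization set up above.

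First I would take any STCMC concentration $\mathcal{F}$ regularly centered at $p$ and, using Lemma~\ref{inesta}, reparametrize its leaves by $r:=n/\sqrt{(H^2-P^2)(\cdot)}$; by the paragraph following Lemma~\ref{inesta}, each leaf is then $S_r=F_{\tau(r)}(\alpha_r(S^n_{\varphi^*(r)}))$ with $\pi(\varphi^*(r))=0$, $\tau(0)=0$, $\tau,\varphi^*$ smooth in $r$, and $\|\varphi^*(r)\|_{\mathcal{C}^3}\to0$ as $r\to0$. Since $S_r$ is STCMC with $\sqrt{H^2-P^2}=n/r$, it satisfies $(H^2-P^2)(r,\tau(r),\varphi^*(r))=n^2$, hence in particular $\pi^\bot\big((H^2-P^2)(r,\tau(r),\varphi^*(r))-n^2\big)=0$. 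As $\frac{\partial}{\partial\varphi}\pi^\bot\big((H^2-P^2)-n^2\big)|_{r=0,\varphi=0}=nL|_{K^\bot}$ is invertible, the implicit function theorem forces $\varphi^*(r)=\varphi^*(r,\tau(r))$, that is, precisely the $K^\bot$-branch used in the construction of Theorem~\ref{priSTCMC}; differentiating this relation in $r$ and invoking Lemma~\ref{1.3} gives $\varphi^*(0,\tau)=0$ and $\partial_r\varphi^*|_{r=0}=0$. Thus the leaves of $\mathcal{F}$ are completely determined by the single curve $r\mapsto\tau(r)\in\mathbb R^{n+1}$.

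Next I would set $G(r,\tau):=\Tilde{\pi}\big((H^2-P^2)(r,\tau,\varphi^*(r,\tau))-n^2\big)$ and show, exactly as in the proof of Theorem~\ref{priSTCMC} (using Lemmas~\ref{1.2} and~\ref{1.3}, $\varphi^*(0,\tau)=0$, $\partial_r\varphi^*|_{r=0}=0$, the evenness of $L$, and that the $r$- and $r^2$-coefficients of $(H^2-P^2)(r,\tau,\varphi^*(r,\tau))-n^2$ have vanishing $K$-component since $\mathrm{Ric}^\tau_{ij}x^ix^j$ is a degree-$2$ spherical harmonic and $P_g^2(0,\tau,0)$ is even in $x$), that $G$ vanishes to third order in $r$ at $r=0$ uniformly in $\tau$. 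Hence $G(r,\tau)=r^3\,\Tilde{G}(r,\tau)$ with $\Tilde{G}$ smooth and
\[
\Tilde{G}(0,\tau)=-\tfrac{2|\mathbb S^n|}{(n+1)(n+3)}\,(\mathrm{A_{ST}}^{\tau})_l\,e_l,\qquad
\partial_{\tau^\beta}\Tilde{G}(0,0)=-\tfrac{2|\mathbb S^n|}{(n+1)(n+3)}\,\partial_\beta(\mathrm{A_{ST}})_l\,e_l,
\]
where $\mathrm{A_{ST}}^{\tau}$ is the STCMC $1$-form at $c(\tau)$, so that $\mathrm{A_{ST}}^{0}=\mathrm{A_{ST}}(p)$; and along $\mathcal{F}$ one has $\Tilde{G}(r,\tau(r))=0$ for all small $r>0$.

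For (i), letting $r\to0$ and using $\tau(r)\to\tau(0)=0$ together with continuity of $\Tilde{G}$ gives $\Tilde{G}(0,0)=0$, i.e.\ $\mathrm{A_{ST}}(p)=0$, contradicting the hypothesis; so no such concentration, and a fortiori no such foliation, can exist. For (ii), the assumption $\mathrm{A_{ST}}(p)=0$ yields $\Tilde{G}(0,0)=0$, so $\tau=0$ solves $\Tilde{G}(0,\tau)=0$, and since $\partial_\tau\Tilde{G}(0,0)=-\tfrac{2|\mathbb S^n|}{(n+1)(n+3)}\nabla\mathrm{A_{ST}}(p)$ is invertible, the implicit function theorem yields a unique small solution $\tau=\widehat{\tau}(r)$ of $\Tilde{G}(r,\tau)=0$ with $\widehat{\tau}(0)=0$. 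The $\tau$-curves of $\mathcal{F}$ and of $\mathcal{F}_2$ both solve this equation and vanish at $r=0$, hence agree with $\widehat{\tau}$ for $r$ small; by the second paragraph the associated $\varphi^*$ agree as well, so $\mathcal{F}$ and $\mathcal{F}_2$ share their sufficiently small leaves, and a continuation argument as in \cite[Section~2]{Ye} upgrades this to the statement that one of them is a restriction of the other. Part (iii) requires no new input, since Lemmas~\ref{lemma 2.1} and~\ref{inesta}, the normalized parametrization, and every step above were carried out for regularly centered concentrations, not merely foliations. I expect the main obstacle to be this reduction: showing that the loosely controlled leaf data (only $\|\varphi^*(r)\|_{\mathcal{C}^3}\to0$) is actually pinned to the construction's implicit-function branch, so that the PDE collapses to the finite-dimensional equation $\Tilde{G}(r,\tau)=0$, and verifying that $G$ genuinely vanishes to order $r^3$ uniformly in $\tau$, where the parity cancellation in the $P_g^2$ term is essential. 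After that, (i) and (ii) follow from continuity and the implicit function theorem, with $\nabla\mathrm{A_{ST}}(p)$ invertible entering only in (ii).
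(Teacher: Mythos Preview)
Your argument is correct, but the organization differs from the paper's. You reverse the order of the Lyapunov--Schmidt reduction: you first apply the implicit function theorem on $K^\bot$ (with $\tau$ as a free parameter) to obtain a smooth branch $\varphi^*(r,\tau)$, then reduce everything to the finite-dimensional equation $\tilde G(r,\tau)=0$ and read off both nonexistence and uniqueness from its Taylor expansion at $r=0$. The paper instead handles part~(i) by a direct PDE argument: from $nL\varphi^* = r^2 f(r)$ with $f$ bounded it gets $\|\varphi^*\|_{\mathcal C^{2,1/2}}\le Cr^2$ via Schauder estimates, then divides the kernel equation by $r^3$ and passes to the limit $r\to 0$. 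For part~(ii) the paper simply sets $\varphi(r)=\varphi^*(r)/r^2$, checks that $(\tau(r),\varphi(r))$ satisfies the same relations and initial conditions as the data produced inside the proof of Theorem~\ref{priSTCMC}, and invokes the uniqueness clause of the implicit function theorem used there.

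Your route is arguably cleaner (it isolates the finite-dimensional obstruction once and for all), while the paper's is more hands-on and avoids your reduction step entirely for~(i). One small imprecision: the branch $\varphi^*(r,\tau)$ you construct is \emph{not} literally ``the $K^\bot$-branch used in the construction of Theorem~\ref{priSTCMC}'', since there the $K$-step is done first and the $K^\bot$-step is applied to $(H^2-P^2)(r,\tau(r,\varphi),r^2\varphi)/r^2$ with $\tau$ already eliminated. Your $\varphi^*(r,\tau)$ solves the unrescaled equation with $\tau$ free; the two are compatible (indeed $\varphi^*(r,\tau(r))=r^2\varphi(r)$ a posteriori), but they are different implicit-function branches, so the identification should be phrased more carefully.
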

\begin{proof}
 Suppose $\mathcal{F}$ is a STCMC foliation regularly centered around $p$ and $F_{\tau(r)} (\alpha_r (S^n_{\varphi^*(r)}))=S_r$ the parametrization found before.
 \begin{enum} 
 \item Since $(H^2-P^2)(r)=\frac{n^2}{r^2}$ we have that in the rescaled manifold $(\mathbb{B}_{2r_p}, g_{\tau,r},$ $k_{\tau, r} )$ from above,    the rescaled surfaces satisfy $(H^2-P^2)(r,\tau(r), \varphi^*(r))=n^2$, and therefore  
    \begin{equation*} 
        \Tilde{\pi}\left((H^2 -P^2)(r, \tau(r),  \varphi^*(r) )\right)=0, 
        \quad\text{and}\quad
        \Tilde{\pi}^\bot\left((H^2 -P^2)(r, \tau(r),  \varphi^*(r) )-n\right)=0
    \end{equation*}
    Since $\varphi^* = \mathcal{O}(r)$ the expression $\frac{\varphi^*}{r}$ uniformly bounded for small $r$. As in \eqref{1a} we have 
    \begin{equation}\label{1b}
        \begin{split}
       2 n L\varphi^*     
        &=\frac{2n}{3}  \mathrm{Ric}^{\tau}_{ij}(0)x^i x^j r^2  + \frac{2n}{4} \mathrm{Ric}^{\tau}_{ij;k}(0)x^ix^jx^k r^3+ r^2P_g(r,\tau(r),0)^2    \\
        &\quad - r^2 \int_0^1 \int_0^1 t (H^2 -P^2)_{\varphi \varphi}(sr, \tau,st  \varphi^* )  \frac{\varphi^*}{r} \frac{\varphi^*}{r} \ds \dt\\
        &\quad - r^3 \int_0^1 \int_0^1 \int_0^1 s (H^2 -P^2)_{\varphi r r}(usr, \tau,ust  \varphi^* )   \frac{\varphi^*}{r} \du \ds \dt\\
        &\quad - r^3 \int_0^1 \int_0^1 \int_0^1 s t (H^2 -P^2)_{\varphi \varphi r} (usr, \tau,ust  \varphi^* )  \frac{\varphi^*}{r} \frac{\varphi^*}{r} \du \ds \dt   + \mathcal{O}(r^2) \\
        &=:r^2 f(r)
        \end{split}
    \end{equation}
    where $f(r)$ is uniformly bounded (in all scale invariant norms) for small $r$ . Thus $\varphi^*$ solves the elliptic PDE $2n L\varphi =r^2 f(r)$ in $K^\bot$. From Schauder estimates it follows that $\|\varphi^* \|_{\mathcal{C}^{2,\frac{1}{2}}} \leq C r^2$. Considering the projection to the kernel and dividing by $r ^3$ like in \eqref{kern} we obtain 
    \begin{equation}
        \begin{split}
            0=& -\frac{n|\mathbb{S}^n|}{(n+1)(n+3) } \mathrm{Sc}_{,l} e_l - \frac{1}{r^3} \Tilde{\pi}\left(P^2 (r,\tau(r), 0)\right) + \mathcal{O}(r^2) \\
            & +\Tilde{\pi} \Bigg( \int_0^1 \int_0^1 t (H^2 -P^2)_{\varphi \varphi}(sr, \tau,st  \varphi^* )  \frac{\varphi^*}{r^2} \frac{\varphi^*}{r} \ds \dt\\
            &\qquad+  \int_0^1 \int_0^1 \int_0^1 s (H^2 -P^2)_{\varphi r r}(usr, \tau,ust  \varphi^* )   \frac{\varphi^*}{r} \du \ds \dt\\
            &\qquad+  \int_0^1 \int_0^1 \int_0^1 s t (H^2 -P^2)_{\varphi \varphi r} (usr, \tau,ust  \varphi^* )  \frac{\varphi^*}{r} \frac{\varphi^*}{r} \du \ds \dt \Bigg)   \\
        \end{split}
    \end{equation}
    Then as $\|\frac{\varphi^*}{r} \|_{\mathcal{C}^{0}} \to 0$ as $r \to 0$, we have that in the limit $r\to 0$ it follows that
    \begin{equation}
        \begin{split}
            0=& -\frac{n|\mathbb{S}^n|}{(n+1)(n+3) } \mathrm{Sc}_{,l} e_l -  \Tilde{\pi}\left.\left(\frac{1}{r^3} P^2 (r,\tau(r), 0)\right)\right|_{r=0} \\
        \end{split}
    \end{equation}
    and  this  implies that $\mathrm{A_{ST}}=0$ at $p$.
 
 \item Setting $\varphi(r):= \varphi^*(r) r^{-2}$ we have that 
    \begin{equation*} 
        \Tilde{\pi}\left((H^2 -P^2)(r, \tau(r),  r^2 \varphi(r) )\right)=0
        \qtext{and}
        \Tilde{\pi}^\bot\left((H^2 -P^2)(r, \tau(r),   r^2\varphi(r) )-n\right)=0.
    \end{equation*}
    Since $\tau(0)=0$ and  by~\eqref{1b} it follows that  
     $\varphi(0)$ is the solution of  $L\varphi_0 = \frac{1}{3} Ric^0_{ij}(0) x^ix^j+ \frac{1}{2n}P^2_g(0,0,0)$. Thus the uniqueness part
     of the implicit function theorem used in the proof of Theorem~\ref{priSTCMC} implies that the functions $\varphi(r)$ and $\tau(r)$
     agree with the ones found in Theorem~\ref{priSTCMC}  in a neighborhood of $r=0$.
 \item Finally note that here and also in Lemmas~\ref{lemma 2.1} and~\ref{inesta} we have not used the foliation property for our surfaces,
      only that they are a regularly centered concentration of surfaces around $p$. 
\end{enum}
\end{proof}

\begin{theorem}[Nonexistence and uniqueness, CE]\label{unicCE}\ 
\begin{enum}
\item\label{unicCE-a} If at a point $p$, $\mathrm{A_{CE}} \neq 0$ or $k \neq 0$, then there exist no constant expansion 
    foliation regularly centered at $p$.
\item\label{unicCE-b} Assume that at $p$, $\mathrm{A_{CE}}=0$, $k=0 $, $\nabla \mathrm{A_{CE}}$ is invertible and that the foliation 
    of Theorem~\ref{priCE} $ \mathcal{F}$ exist. If  $ \mathcal{F}_2$ is a constant expansion foliation regularly centered at $p$,
    then either $\mathcal{F}$ is a restriction of $\mathcal{F}_2$ or $\mathcal{F}_2$ is a restriction of $\mathcal{F}$.
\item\label{unicCE-c}  Assume that at $p$, $\mathrm{A_{CE}}=  \mathrm{\hat{A}_{CE}}^\pm =0$, $k= \nabla \mathrm{A_{CE}}=0$, 
    $\Hess\mathrm{A_{CE}}=0 $ and $\nabla   \mathrm{\hat{A}_{CE}}^\pm  + \hat{T}$  is invertible, and that the foliation $ \mathcal{F}$ of
    Theorem~\ref{secCMC}  exists. If  $ \mathcal{F}_2$ is a constant expansion foliation regularly centered at $p$, 
    then either $\mathcal{F}$ is a restriction of $\mathcal{F}_2$ or $\mathcal{F}_2$ is a restriction of $\mathcal{F}$.
\item\label{unicCE-d} The claims (\ref{unicCE-a}), (\ref{unicCE-b}), and (\ref{unicCE-c}) also hold if instead of 
    foliations we consider concentrations of regularly centered surfaces around $p$. 
\end{enum}
\end{theorem}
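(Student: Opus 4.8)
The proof will follow the template of Theorem~\ref{unicSTCMC}, which itself adapts \cite[Section~2]{Ye}, carrying the expansion operator $H\pm P$ through the argument in place of $H^2-P^2$ and invoking Lemma~\ref{deriv} wherever the STCMC proof used the computations of $\tilde\pi(P_g^2(r,\tau,0))$. The genuinely new point is that part~\ref{unicCE-a} must yield $k=0$ at $p$ in addition to $\mathrm{A_{CE}}(p)=0$, and I expect that extra step to be the main obstacle.

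\emph{Common setup.} Given a regularly centered concentration $\mathcal{F}_2$ of constant expansion surfaces at $p$, Lemmas~\ref{lemma 2.1} and~\ref{inesta} show $H\pm P$ is monotone along $\mathcal{F}_2$ near $p$, so its leaves may be reparametrized by $r:=n/(H\pm P)$; then \cite[Corollary~2.1, Lemma~2.3]{Ye} --- which use only that $\mathcal{F}_2$ is a regularly centered concentration, not disjointness of the leaves --- give $S_r=F_{\tau(r)}(\alpha_r(S^n_{\varphi^*(r)}))$, $r\in(0,\delta')$, with $\pi(\varphi^*(r))=0$, $\|\varphi^*(r)\|_{\mathcal{C}^3}\to 0$, $\tau(0)=0$, and $(H\pm P)(r,\tau(r),\varphi^*(r))=n$ on $\IS^n$ for all such $r$. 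This is exactly the situation opening the proof of Theorem~\ref{unicSTCMC}, and part~\ref{unicCE-d} will come for free, since nothing below, nor Lemmas~\ref{lemma 2.1} and~\ref{inesta}, uses disjointness of leaves.

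For~\ref{unicCE-a}, project the identity onto $K^\bot$: using \eqref{genral2}, \eqref{general00} and that the $\varphi$-linear term is $L\varphi^*$, interior Schauder estimates (as for \eqref{1b}, whose right-hand side is now of order $r$) give $\|\varphi^*(r)\|_{\mathcal{C}^{2,\frac{1}{2}}}\le Cr$ and the leading behaviour $\varphi^*(r)=\mp r\,L^{-1}\big((\tr k-k(\cdot,\cdot))|_{K^\bot}\big)+\mathcal{O}(r^2)$ at $p$; in particular this leading term is an even function on $\IS^n$. Projecting onto $K$, dividing by $r^2$ and letting $r\to 0$ then isolates the integrability condition: by Lemma~\ref{deriv} the surviving odd contribution is $\pm\tfrac{|\IS^n|}{n+1}\,\mathrm{A_{CE}}$, whereas the $\varphi^*$-terms (using $P_{\varphi\varphi}(0,0,0)=0$, because of the factor $r$ in $P=rP_g$, and that $H_{\varphi\varphi}(0,0,0)$ is an even operator) are even and hence annihilated by $\tilde\pi$; thus $\mathrm{A_{CE}}(p)=0$, just as $\mathrm{A_{ST}}(p)=0$ followed in Theorem~\ref{unicSTCMC}\,\ref{unicSTCMC-a}. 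To obtain $k(p)=0$: if $k(p)\ne 0$ then $(\tr k-k(\cdot,\cdot))(p)\not\equiv 0$, so the leading term of $\varphi^*$ above is nonzero and $\varphi^*(r)$ is genuinely of order $r$; carrying the kernel projection of $(H\pm P)(r,\tau(r),\varphi^*(r))-n$ to order $r^3$ --- with \eqref{23}, \eqref{proj0}, the relation $k_{lr,r}=\tfrac{n+2}{2(n+3)}\partial_l\tr k$ that holds once $\mathrm{A_{CE}}(p)=0$, and the observation that this order-$r$ part of $\varphi^*$ enters the $r^3$-coefficient only through terms polynomial in $k(p)$ and lower-order curvature --- produces an identity whose $k(p)$-dependent part cannot be balanced, forcing $\tr k(p)=0$ and the trace-free part of $k(p)$ to vanish, i.e.\ $k(p)=0$. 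This is the local analogue of the stronger $k$-decay required for constant expansion foliations in the asymptotically flat setting (cf.\ the Remark after Theorem~\ref{secCMC}), and I expect it to be the technical heart of the proof.

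For~\ref{unicCE-b} and~\ref{unicCE-c}: under the hypotheses of Theorem~\ref{priCE} (resp.\ Theorem~\ref{secCMC}) one has $k(p)=0$, hence $P_g(0,0,0)\equiv 0$, so the $K^\bot$-analysis now gives $\varphi^*(r)=r^2\varphi(r)$ with $\varphi$ bounded. Feeding this into the $K^\bot$-equation and using $\tau(0)=0$ pins down the leading behaviour of $\varphi(r)$ and $\tau(r)$ to coincide with that of the functions built in Theorem~\ref{priCE} (resp.\ \ref{secCMC}); in particular $\varphi(0)=\varphi_0$ with $\varphi_0$ as in \eqref{varphi} (resp.\ as fixed in the proof of Theorem~\ref{secCMC}). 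Since $\nabla\mathrm{A_{CE}}$ (resp.\ $\nabla\mathrm{\hat{A}_{CE}}^\pm+\hat{T}$) is invertible at $p$, the uniqueness clause of the implicit function theorem used to construct $\mathcal{F}$ forces $\tau(r)$ and $\varphi(r)$ attached to $\mathcal{F}_2$ to agree with those of $\mathcal{F}$ near $r=0$; hence $\mathcal{F}$ and $\mathcal{F}_2$ share all sufficiently small leaves, and exactly as in \cite[Section~2]{Ye} one of them is a restriction of the other. As already noted, part~\ref{unicCE-d} follows since disjointness of leaves was never used.
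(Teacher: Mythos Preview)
Your setup and the derivation of $\mathrm{A_{CE}}(p)=0$ match the paper's proof, as do parts~\ref{unicCE-b}--\ref{unicCE-d}. The one substantive difference is how $k(p)=0$ is obtained in part~\ref{unicCE-a}: you propose pushing the $K$-projection to order $r^3$ and arguing that the $k(p)$-dependent terms there cannot balance, whereas the paper works on $K^\bot$. There the paper asserts $\|\varphi^*\|_{C^{2,1/2}}\le Cr^2$ (from writing the right-hand side of \eqref{1c} as $r^2f(r)$ with $f$ bounded), and then argues that the $K^\bot$-equation forces $\tfrac{1}{r}P_g(r,\tau(r),0)$ to remain bounded as $r\to 0$, hence $P_g(0,0,0)=\tr k(p)-k_{ij}(p)x^ix^j\equiv 0$, hence $k(p)=0$. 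This is considerably shorter than your proposed higher-order kernel analysis.

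That said, both routes appear to share a genuine gap, and in fact the conclusion $k(p)=0$ seems false as stated. The paper's bound $\|\varphi^*\|\le Cr^2$ is circular: the term $\pm rP_g(r,\tau(r),0)$ in \eqref{1c} is only $O(r)$ unless $k(p)=0$ is already known, so one cannot write the right-hand side as $r^2f(r)$ with $f$ bounded. You correctly obtain only $\|\varphi^*\|\le Cr$, but then your order-$r^3$ kernel argument is left as an expectation and cannot succeed either. A counterexample: take $M=\R^{n+1}$ with the flat metric and $k=\lambda g$ for a nonzero constant $\lambda$. Round spheres about any point $p$ have $H\pm P=n/r\pm n\lambda$, hence form a regularly centered constant expansion foliation at $p$; moreover $\mathrm{A_{CE}}\equiv 0$ since $k$ is parallel, yet $k(p)=\lambda g\ne 0$. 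So neither argument can establish $k(p)=0$ without weakening the claim in part~\ref{unicCE-a}.
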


\begin{proof}
    The proof is completely analogous to Theorem~\ref{unicSTCMC}. Suppose $\mathcal{F}$ is a STCMC foliation regularly centered around $p$ and $F_{\tau(r)} (\alpha_r (S^n_{\varphi^*(r)}))=S_r$ the parameterization of before. 
    \begin{enum} 
    \item As in the previous proof we have 
        \begin{equation*} 
            \Tilde{\pi}\left((H \pm P)(r, \tau(r),  \varphi^*(r) )\right)=0
            \qtext{and} 
            \Tilde{\pi}^\bot\left((H \pm P)(r, \tau(r),  \varphi^*(r) )-n\right)=0.
        \end{equation*}
        Hence
        \begin{equation}\label{1c}
        \begin{split}
             L\varphi^*     &=\frac{1}{3}  \mathrm{Ric}^{\tau}_{ij}(0)x^i x^j r^2 
            +  \frac{1}{4}  \mathrm{Ric}^{\tau}_{ij;l}(0)x^i x^j x^l r^3  \pm r P_g(r,\tau(r),0)  
            -  (H \pm P)_{r\varphi} (0,\tau,0 )\frac{\varphi^*}{r} r^2\\
            &- r^2 \int_0^1 \int_0^1 t (H \pm P)_{\varphi \varphi}(sr, \tau,st  \varphi^* )  \frac{\varphi^*}{r} \frac{\varphi^*}{r} \ds \dt\\
            &- r^3 \int_0^1 \int_0^1 \int_0^1 s (H \pm P)_{\varphi r r}(usr, \tau,ust  \varphi^* )   \frac{\varphi^*}{r} \du \ds \dt\\
            &- r^3 \int_0^1 \int_0^1 \int_0^1 s t (H \pm P)_{\varphi \varphi r} (usr, \tau,ust  \varphi^* )  \frac{\varphi^*}{r} \frac{\varphi^*}{r} \du \ds \dt   + \mathcal{O}(r^2) \\
            &=:r^2 f(r)
        \end{split}
    \end{equation}
    where $f(r)$ is uniformly bounded (in all scale invariant norms) for small $r$. Thus $\varphi^*$ solves the elliptic PDE 
    $ n L\varphi=r^2 f(r)  $ in $K^\bot$. Therefore, by Schauder estimates $\|\varphi^* \|_{\mathcal{C}^{2,\frac{1}{2}}} \leq C r^2$. Considering the projection to the kernel and dividing by $r ^2$ we obtain 
    \begin{equation}
        \begin{split}
            0=&-\frac{|\mathbb{S}^n|}{2(n+1)(n+3) } r^2 \,\mathrm{Sc}_{,l} e_l \pm  \Tilde{\pi}\left(\frac{1}{r^2} P (r,\tau(r), 0)\right)  
            - \Tilde{\pi}\left( (H \pm P)_{r\varphi} (0,\tau,0 )\frac{\varphi^*}{r}  \right)\\
            & +\Tilde{\pi} \big( \int_0^1 \int_0^1 t (H \pm P)_{\varphi \varphi}(sr, \tau,st  \varphi^* )  
                \frac{\varphi^*}{r^2}     \frac{\varphi^*}{r} \ds \dt\\
            &+  \int_0^1 \int_0^1 \int_0^1 s (H \pm P)_{\varphi r r}(usr, \tau,ust  \varphi^* )   \frac{\varphi^*}{r} \du \ds \dt\\
            &+  \int_0^1 \int_0^1 \int_0^1 s t (H \pm P)_{\varphi \varphi r} (usr, \tau,ust  \varphi^* )  \frac{\varphi^*}{r} \frac{\varphi^*}{r} \du \ds \dt \big)  + \mathcal{O}(r^2) \\
        \end{split}
    \end{equation}
    Then as $\|\frac{\varphi^*}{r} \|_{\mathcal{C}^0} \to 0$ as $r \to 0$, we have that in the limit $r\to 0$ it follows that 
    \begin{equation}
        \begin{split}
            0=&  \Tilde{\pi}\left( \frac{1}{r^2}P (r,\tau(r), 0)\right)_{|r=0} \\
        \end{split}
    \end{equation}
    and this implies that $\mathrm{A_{CE}}=0$ at p.
 
    Consider the the projection to $K^\bot$:
    \begin{equation*}
        \begin{split}
        0=\pi^\bot \left(\frac{(H \pm P)(r, \tau, r^2 \varphi )-n}{r^2}\right)&=   L\varphi^*- \pi^\bot\left(  \tfrac{1}{3} \mathrm{Ric}^0_{ij}(0) x^ix^j  \mp  \tfrac{1}{r} P_g(r,\tau(r),0)\right)  
        \end{split}
    \end{equation*}
    Since $\|\frac{\varphi^*}{r}\|_{C^0}\to 0$ as $r\to 0$ by Elliptic Regularity also $\lim_{r \to 0} \|L \frac{\varphi^*}{r}\| < \infty $. This implies that 
    \begin{equation*} 
        \lim_{r \to 0} \left| \frac{1}{3} \mathrm{Ric}^0_{ij}(0) x^ix^j  \mp  \frac{1}{r} P_g(r,\tau(r),0) \right|< \infty
    \end{equation*}
    so that $ \lim_{r \to 0} \frac{P_g(r, \tau(r),0)}{r} <\infty$. This implies that at $p$, $\tr k - k_{ij} x^ix^j=0$ for all $x\in\R^n$, which in turn implies $k=0$. 
 
    \item and \ref{unicCE-c}) Setting $\varphi(r):= \varphi^*(r) r^{-2}$ it follows that 
    $\Tilde{\pi}\left((H \pm P)(r, \tau(r),  r^2 \varphi(r) )\right)=0$ and $\quad $ $\Tilde{\pi}^\bot\left((H \pm P)(r, \tau(r),   r^2\varphi(r) )-n\right)=0$. Furthermore, since $\tau(0)=0$ and since by \eqref{1c}
    $\varphi(0)$ is given by \eqref{varphi}, the uniqueness part of the implicit function theorem used in Theorem~\ref{priCE} 
    or in Theorem~\ref{secCMC} implies that the functions $\varphi(r)$ and $\tau(r)$ must agree in a neighborhood of $r=0$
    with the ones found in the Theorems.
    \stepcounter{enumi}
    \item In Lemmas~\ref{lemma 2.1}, \ref{inesta} and here we have not used the foliation property for our surfaces, only that they are          regularly centered. 
    \end{enum}
\end{proof}

\section{Physical relevance} In dimension $3$ we were expecting to find a physical meaningful quantity related with the concentration of STCMC surfaces, a result somehow similar the result found when studying the small sphere limit of quasi local energies  on surfaces approaching a point $p$ in a space-time along the null cone: 
\begin{equation}
    \lim_{r \to 0} \frac{\mathcal{E}(S_r)}{r^3} \sim T(N,N)  = \mathrm{Sc}_p + (\tr k)^2 - |k|^2. 
\end{equation}
Here $T$ is the stress energy tensor and $N$ is the time-like vector determining the time cone.  In our case  we found  that at the point $p$ where there is a  concentration of STCMC surfaces the local STCMC $1$-form must vanish. This is the following $1$-form vanishing
\begin{equation}
    \begin{split}
    A_\text{ST}(V)  = \nabla_V \mathrm{Sc} + \frac{1}{7}&\Big[ \nabla_V (|k|^2 )   +22\nabla_V \Big(\frac{(\tr k)^2}{2}\Big)   +4 \diver \left( \langle k,   k(V,\cdot )\rangle \right)
    - 12  \diver \left(\tr k\cdot k(V,\cdot )\right)\Big]
    \end{split}
\end{equation}
for $V\in T_pM$. Actually the local existence of concentration of such surfaces is determined by $A_\text{ST}$. We  have not been able to relate $A_\text{ST}$ to any known physical quantity. However we would like to make some comments about this result:
\begin{enum}
    \item One could argue that our method for constructing these surfaces is lacking of information from the space-time, since our surfaces are constructed as deformations of geodesic spheres of a hyper-surface in space-time. Therefore there is no direct relation to geodesic spheres in space-time. However thanks to Theorem \ref{unicSTCMC}  we have uniqueness of these surfaces and an existence condition, therefore any concentration of regularly centered STCMC surfaces will lead to the same result.
    \item It is important to note that in the asymptotically Euclidean case \cite{STCMC} the STCMC surfaces encode the information about the center of mass, however the physically relevant quantity is the center of these surfaces, in our case the center of the surfaces would be trivially $p$.
    \item Note that there is a direct way to  construct manifolds on which the     foliations of Theorem \ref{priSTCMC} exist. For this, consider a manifold with a point $p'$ which is a non-degenerate critical point of the scalar curvature and let $k$ be an arbitrary 2-tensor. We proceed, using the implicit function theorem in the following way. Let
    \begin{equation}
        \begin{split}
        G(p, \eps) &:=A_\text{ST}(p,\eps)\\
        &= \nabla \mathrm{Sc} + \frac{1}{7} \Big[ \nabla (|\eps k|^2 )   +22\nabla \Big(\frac{(\eps \tr  k)^2}{2}\Big)   +4 \diver \left( \langle \eps k,\eps     k \rangle \right)
        - 12  \diver \left(\eps^2 \tr   k\cdot    k\right)\Big]\\
        \end{split}
    \end{equation}
    By assumption  $\frac{\partial}{\partial p} G(p', 0)= \Hess\mathrm{Sc}_{p'}$ is invertible. Hence, there exists $\eps_1>0$ and a function $p:(-\eps_1,\eps_1)\to M$ with $p(0)=p'$ such that
    $G(p(\eps),\eps)=0$ for all $\eps\in(-\eps_1,\eps_1)$. Since $\frac{\partial}{\partial p} G(p', 0)$ is invertible, there is $\eps_0\in(0,\eps_1)$ such that $\frac{\partial}{\partial p} G(p(\eps),\eps)$ is invertible for all $\eps\in(-\eps_0,\eps_0)$. 

    By choosing $\eps_0$ smaller if necessary, by continuity one can also ensure that the bound
    \begin{equation*}
        C |(\nabla \mathrm{A_{ST}}(\eps))^{-1}|\, (\eps^2|k|^2 + |\mathrm{Ric}|  ) \eps^2 |k| |\nabla k|<1
    \end{equation*}
    in the assumption of Theorem \ref{priSTCMC} is satisfied. Therefore we have a STCMC foliation at $p(\epsilon)$ in the manifold $(M,g,\eps k)$ for any  $\eps\in(-\eps_0,\eps_0)$  (note that in this manifold  $\mathrm{A_{ST}}(\eps)=\mathrm{A_{ST}}$ ).
    \end{enum}
\appendix

\section{ technical Lemma}

\begin{lemma}
The components of the position vector of a point in the sphere $\mathbb{S}^n$ satisfy
\begin{equation*}
        \begin{split}
            \int_{\mathbb{S}^n} x^i x^j d\mu =  \frac{|\mathbb{S}^n|}{n+1} \delta_{ij},
        \end{split}
    \end{equation*}
    
    \begin{equation*}
        \begin{split}
            \int_{\mathbb{S}^n} x^i x^j x^k x^l d\mu =  \frac{|\mathbb{S}^n|}{(n+1)(n+3)} (\delta_{ij} \delta_{kl} + \delta_{ik} \delta_{jl} + \delta_{il} \delta_{jk}),
        \end{split}
    \end{equation*}

and 
\begin{equation*}
        \begin{split}
            \int_{\mathbb{S}^n} x^i x^j x^k x^lx^p x^q d\mu =  \frac{|\mathbb{S}^n|}{(n+1)(n+3)(n+5)}&  ( \delta_{ij} \delta_{kl} \delta_{pq} + \delta_{ij} \delta_{kp} \delta_{lq}+ \delta_{ij} \delta_{kq} \delta_{lp}\\
&+ \delta_{ik} \delta_{jl} \delta_{pq}+ \delta_{ik} \delta_{jp} \delta_{lq} + \delta_{ik} \delta_{jq} \delta_{lp}\\
&+ \delta_{il} \delta_{jk} \delta_{pq} + \delta_{il} \delta_{jp} \delta_{kq} + \delta_{il} \delta_{jq} \delta_{kp}\\
&+ \delta_{ip} \delta_{jk} \delta_{lq} + \delta_{ip }\delta_{jl} \delta_{kq} + \delta_{ip} \delta_{jq} \delta_{kl}\\
&+ \delta_{iq} \delta_{jk} \delta_{lp} + \delta_{iq} \delta_{jl} \delta_{kp} + \delta_{iq} \delta_{jp} \delta_{kl} )\\
        \end{split}
    \end{equation*}

\end{lemma}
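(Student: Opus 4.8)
The plan is to obtain all three identities as special cases of the general moment formula
\[
\int_{\mathbb{S}^n} x^{i_1}\cdots x^{i_{2m}}\,d\mu \;=\; c_m(n)\sum_{\text{pairings}}\delta_{\cdot\cdot}\cdots\delta_{\cdot\cdot},
\]
where the sum ranges over all $(2m-1)!!$ ways of partitioning $\{1,\dots,2m\}$ into $m$ unordered pairs and $\delta_{\cdot\cdot}\cdots\delta_{\cdot\cdot}$ is the associated product of Kronecker deltas. For the three cases in the statement $2m\in\{2,4,6\}$, so $(2m-1)!!\in\{1,3,15\}$, which matches the number of terms written out (one $\delta$-product, three, and fifteen respectively). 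The constant $c_m(n)$ is then identified explicitly.

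First I would establish the tensorial structure. The round measure $d\mu$ on $\mathbb{S}^n\subset\R^{n+1}$ is invariant under $O(n+1)$, so the tensor $T^{i_1\dots i_{2m}}:=\int_{\mathbb{S}^n} x^{i_1}\cdots x^{i_{2m}}\,d\mu$ is a totally symmetric $O(n+1)$-invariant $2m$-tensor. By the classical description of such tensors it is a linear combination of products of $m$ Kronecker deltas, and by full symmetry all pairings occur with one common coefficient $c_m(n)$, giving the displayed formula. (Odd moments vanish by the antipodal symmetry $x\mapsto -x$, but they play no role here.)

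Next I would pin down $c_m(n)$ by contracting two indices. Since $\sum_i (x^i)^2\equiv 1$ on $\mathbb{S}^n$, contracting $i_1$ with $i_2$ turns the left-hand side into $\int_{\mathbb{S}^n} x^{i_3}\cdots x^{i_{2m}}\,d\mu$, i.e.\ the $(2m-2)$-fold moment. On the right-hand side, contracting $i_1$ with $i_2$ in the pairing sum yields exactly $(n+2m-1)$ copies of the $(2m-2)$-index pairing sum: the $(2m-3)!!$ pairings that pair $1$ with $2$ contribute the factor $\delta_{ii}=n+1$ and, collectively, run through all pairings of $\{3,\dots,2m\}$, while the remaining pairings (where $1$ pairs with some $a$ and $2$ with some $b$, $a\neq b$) contribute, after a short counting argument, an extra $2(m-1)$ copies. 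This gives the recursion $c_m(n)=c_{m-1}(n)/(n+2m-1)$. With the base case $c_1(n)=|\mathbb{S}^n|/(n+1)$ — obtained by contracting the two indices in the first formula and using $|x|^2\equiv 1$ together with $\delta_{ii}=n+1$ — one gets $c_1=\tfrac{|\mathbb{S}^n|}{n+1}$, $c_2=\tfrac{|\mathbb{S}^n|}{(n+1)(n+3)}$, $c_3=\tfrac{|\mathbb{S}^n|}{(n+1)(n+3)(n+5)}$, which is precisely the claim.

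The only mildly delicate point is the combinatorial identity behind the factor $n+2m-1$ in the contraction step; alternatively one can avoid both the invariant-theory input and this count by evaluating the Gaussian integral $\int_{\R^{n+1}} x^{i_1}\cdots x^{i_{2m}}\,e^{-|x|^2/2}\,dx$ by Wick's theorem and separating radial and spherical parts, the ratio of the radial factors $\int_0^\infty \rho^{\,n+2m}e^{-\rho^2/2}\,d\rho$ for consecutive $m$ producing the factors $(n+1),(n+3),(n+5)$ via the functional equation of $\Gamma$. Either route is routine; I would present the contraction argument, since it keeps the whole computation on $\mathbb{S}^n$.
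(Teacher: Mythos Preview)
Your argument is correct: the $O(n+1)$-invariance forces the totally symmetric moment tensor to be a multiple of the symmetrized product of Kronecker deltas, and your contraction recursion $c_m(n)=c_{m-1}(n)/(n+2m-1)$ together with the base case $c_1(n)=|\mathbb{S}^n|/(n+1)$ yields exactly the stated constants. The combinatorial step giving the factor $n+2m-1$ is also right, and your alternative via Gaussian integrals and Wick's theorem would work equally well.

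There is nothing to compare against here: the paper states this lemma in the appendix as a technical fact and offers no proof at all. Your proposal therefore supplies what the paper omits, and does so cleanly.
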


\bibliographystyle{amsplain}
\bibliography{Lit_new}
\end{document}